\theoremstyle{plain}
\newtheorem{theorem}{Theorem}
\newtheorem{lemma}[theorem]{Lemma}
\newtheorem{corollary}[theorem]{Corollary}
\newtheorem{proposition}[theorem]{Proposition}
\theoremstyle{definition}
\newtheorem{definition}[theorem]{Definition}
\theoremstyle{remark}
\newtheorem{rem}{Remark}[section]
\def\l{{\underline {l}}}
\def\N{\mathbb{N}}
\def\Z{\mathbb{Z}}
\def\Q{\mathbb{Q}}
\def\R{\mathbb{R}}
\def\C{\mathbb{C}}
\def\H{\mathbb{H}}
\def\P{\mathbb{P}}
\def\CC{\mathcal{C}}
\def\EE{\mathcal{E}}
\def\MM{\mathcal{M}}
\def\TT{\mathcal{T}}
\def\OO{\mathcal{O}}
\def\HD{\mathrm{HD}}
\def\diam{\mathrm{diam}}
\DeclareMathOperator{\sys}{sys}
\DeclareMathOperator{\SL}{SL}
\DeclareMathOperator{\Sp}{Sp}
\DeclareMathOperator{\Area}{Area}
\DeclareMathOperator{\dist}{dist}
\DeclareMathOperator{\Gal}{Gal}
\DeclareMathOperator{\MCG}{MCG}
\DeclareMathOperator{\SO}{SO}
\def\Re{\operatorname{Re}}
\def\Im{\operatorname{Im}}
\def\id{\mathrm{{id}}}
\def\epsilon{\varepsilon}
\begin{document}

\title{Weak mixing directions in non-arithmetic Veech surfaces}

\author{Artur Avila}

\address{CNRS UMR 7586, Institut de Math\'ematiques de Jussieu-Paris Rive
Gauche, B\^atiment Sophie Germain, Case 7012, 75205 Paris Cedex 13, France
\& IMPA, Estrada Dona Castorina 110, 22460-320, Rio de Janeiro, Brazil}
\email{artur@math.jussieu.fr}

\author{Vincent Delecroix}
\address{CNRS UMR 7586, Instittut de Math\'ematiques de Jussieu-Paris Rive
Gauche, B\^atiment Sophie Germain, 75205 Paris Cedex 13, France}
\curraddr{
LaBRI, UMR 5800, Bâtiment A30, 351, cours de la Libération
33405 Talence cedex, France.
}
\email{delecroix@math.jussieu.fr}

\subjclass[2000]{37A05,37E35}

\begin{abstract}
We show that the billiard in a regular polygon is weak mixing in almost
every invariant surface, except in the trivial cases which give rise to
lattices in the plane (triangle, square and hexagon).
More generally, we study the problem of
prevalence of weak mixing for the directional flow
in an arbitrary non-arithmetic Veech surface, and
show that the Hausdorff dimension of the set of non-weak mixing directions
is not full.  We also provide a necessary condition, verified for instance
by the Veech surface corresponding to the billiard in the pentagon, for
the set of non-weak mixing directions to have positive
Hausdorff dimension.
\end{abstract}

\maketitle

\tableofcontents

\section{Introduction}

\subsection{Weak-mixing directions for billiards in regular polygons}
Let $n \geq 3$ be an integer and consider the billiard in an $n$-sided
regular polygon $P_n$.
It is readily seen that the $3$-dimensional phase space (the unit tangent
bundle $T^1 P_n$) decomposes into a one-parameter family of invariant
surfaces, as there is a clear integral of motion.
In such a setting, it is thus natural to try to understand the
dynamics restricted to each of, or at least most of,
the invariant surfaces.

The cases $n=3,4,6$ are simple to analyze, essentially
because they correspond to a lattice tiling of the plane: the dynamics is
given by a linear flow on a torus, so for a countable set of surfaces
all trajectories\footnote {Here we restrict consideration to orbits that do
not end in a singularity (i.e., a corner
of the billiard table) in finite time.} are periodic, and for all
others the flow is
quasiperiodic and all trajectories are equidistributed
with respect to Lebesgue measure on the surface.

For $n \neq 3,4,6$,
the invariant surfaces have higher genus, and
quasiperiodicity is not automatic anymore.  However,
W. Veech \cite{Veech1989} showed that a dichotomy still
holds: for a countable set of surfaces all infinite trajectories are periodic,
and for all others all trajectories
are equidistributed with respect to Lebesgue measure.

One important question that follows Veech work is whether weak mixing takes
place (absence of mixing is a general property in the more general class of
translation flows, which is known
from earlier work of A. Katok \cite{Katok1980}).
Recall that weak mixing means that there is no
reamainder of periodicity or quasiperiodicity from the measurable point of
view (i.e., there is no factor which is periodic or quasiperiodic), and
can thus be interpreted as the complete breakdown of the nice
lattice behavior seen for $n=3,4,6$.

While results about the prevalence of weak mixing were
obtained in the more general context of translation flows (\cite{AvilaForni2007}, \cite
{AF2}), the case of regular polygons proved to be much more resistent.
The basic reason is that the most succesful approaches so far were
dependent on the
presence of a suitable number of parameters which can be used in a
probabilistic exclusion
argument, and as a consequence they were not adapted to study the rigid
situation of a specific billiard table.  In this paper we address directly
the problem of weak mixing for exceptionally symmetric
translation flows, which include the ones arising from regular polygonal
billiards.

\begin{theorem}

If $n \neq 3,4,6$ then the restriction of the billiard flow in $P_n$ to almost every
invariant surface is weak mixing.

\end{theorem}

Of course, in view of Veech's remarkably precise answer to the problem of
equidistribution, one could wonder whether weak mixing is not only
a prevalent property, but one that might hold outside a countable set of
exceptions.  It turns out that this is not the case
in general, and in fact we will show that
the set of exceptions can be relatively large and have positive Hausdorff
dimension. However, we will prove that it can never have full dimension.

\subsection{Non-arithmetic Veech surfaces}

We now turn to the general framework in which the previous result fits.  A
\emph{translation surface} is a compact surface $S$
which is equipped with an atlas defined on the complement of a finite and
non-empty set of ``marked points'' $\Sigma$, such that the coordinate
changes are translations in $\R^2$ and each marked point $p$
has a punctured neighborhood isomorphic to a finite cover of a punctured disk
in $\R^2$.
The geodesic flow in any translation
surface has an obvious integral of motion, given by the angle of the
direction, which decomposes it into separate dynamical systems, the
\emph{directional flows}.

An \emph{affine homeomorphism} of a
translation surface $(S,\Sigma)$ is a homeomorphism of $S$ which fixes $\Sigma$
pointwise and which is affine and orientation preserving in the charts.
The linear part of such homeomorphism is well
defined in $\SL(2,\R)$. This allows one to define a homomorphism from the group of 
affine homeomorphisms to $\SL(2,\R)$: its image is a discrete subgroup
called the \emph{Veech group} of the translation surface.

A \emph{Veech surface} is an ``exceptionally symmetric'' translation surface,
in the sense that the Veech group is a (finite co-volume)
lattice in $\SL(2,\R)$ (it is easily seen that
the Veech group is never co-compact).  Simple
examples of Veech surfaces are \emph{square-tiled surfaces},
obtained by gluing finitely many copies of
the unit square $[0,1]^2$ along their sides. In this case the Veech group is
commensurable with $\SL(2,\Z)$.  Veech surfaces that can be derived
from square-tiled ones by an affine diffeomorphism are called \emph{arithmetic}.
By construction, arithmetic Veech surfaces are branched covers of flat tori, so
their directional flows are never topologically weak mixing (they admit
a \emph{continuous} quasiperiodic factor in \emph{any} direction).

The first examples of non-arithmetic Veech surfaces were described by
Veech, and correspond precisely to billiard flows on
regular polygons.  It
is easy to see that the billiard flow in $P_n$ corresponds, up to finite cover, to the geodesic
flow on a translation surface obtained by gluing the opposite sides of $P_n$
(when $n$ is even), or the corresponding sides of $P_n$ and $-P_n$ (when $n$
is odd).  This construction yields indeed a Veech surface $S_n$ which is
non-arithmetic precisely when $n \neq 3,4,6$.  The genus $g$ of $S_n$ is
related to $n$ by and $g=[\frac {n} {4}]$ ($n$ even) and $g=\frac {n-1} {2}$ ($n$ odd).

We can now state the main result of this paper:

\begin{theorem} \label{thm:generic_weak_mixing}

The geodesic flow in a non-arithmetic Veech surface is weakly mixing in
almost every direction.  Indeed the Hausdorff dimension of the
set of exceptional directions is less than one.

\end{theorem}

An important algebraic object associated to the Veech group $\Gamma$ of a Veech
surface $(S,\Sigma)$ is the
\emph{trace field} $k$ which is the field extension of $\Q$ generated
by the traces of the elements of
$\Gamma$.  Its degree $r=[k\mathop:\Q]$
satisfies $1 \leq r \leq g$ where $g$ is the genus of
$S$. Moreover, we have $r=1$ if and only if $S$ is arithmetic.
As an example, the trace field of $S_n$ is $\Q[\cos \frac {\pi} {n}]$
if $n$ is odd or $\Q[\cos \frac {2 \pi} {n}]$
if $n$ is even.

\begin{theorem} \label{thm:non_wm_expl_for_quad_trace_field}

Let $S$ be a Veech surface with a quadratic trace field (i.e., $r=2$).
Then the set of
directions for which the directional flow is not even
topologically weak mixing has positive Hausdorff dimension.

\end{theorem}

Notice that this covers the case of certain polygonal billiards
($\Q[\cos \frac {\pi} {n}]$ is quadratic if and only if $n \in \{4,5,6\}$, 
hence the above result holds for $S_n$ with $n \in \{5,8,10,12\}$),
and of all non-arithmetic
Veech surfaces in genus $2$.  We point out that
Theorem~\ref{thm:non_wm_expl_for_quad_trace_field} is a particular case
of a more
general result, Theorem~\ref{thm:non_wm_expl}, which does cover
some non-arithmetic Veech surfaces with non-quadratic trace fields
(indeed it applies to all non-arithmetic $S_n$ with $n \leq 16$,
the degrees of their trace fields ranging from $2$ to $6$)
and could possibly apply to all non-arithmetic Veech surfaces.  Let us also
note that the non-weak mixing directions obtained in
Theorem \ref {thm:non_wm_expl} have multiple rationally
independent eigenvalues.

One crucial aspect of our analysis is a better description of the
possible eigenvalues (in any minimal direction) in a
Veech surface.  Using
the algebraic nature of Veech surfaces, we are able to
conclude several restrictions on the group of eigenvalues
.
For non-arithmetic Veech surfaces,
the ratio of two eigenvalues always belong to $k$ and
the number of rationally independent eigenvalues is always at most
$[k\mathop:\Q]$.  Moreover,
the group of eigenvalues is finitely generated (so the \emph
{Kronecker factor} is always a minimal translation of a finite
dimensional torus).
Whereas for the case of
arithmetic Veech surfaces, we obtain that all
eigenvalues come from a ramified cover of a torus.
See Theorem~\ref{thm:nb_eig} for the precise statement.

We expect that, for a non-arithmetic Veech surface and along any minimal
direction that is not weak mixing, there are always exactly
$[k \mathop: \Q]$ independent eigenvalues, and that they are either all
continuous or all discontinuous.  This is the case along directions for
which the corresponding forward Teichm\"uller geodesic is bounded in
moduli space, see Remark~\ref{rmk:nb_eig_lin_rec}.

\subsection{Further comments}

The strategy to prove weak mixing for a directional flow on a translation
surface $S$ is to show that the associated unitary
flow has no non-trivial eigenvalues.
It is convenient to first rotate the surface so that the
directional flow goes along the vertical direction.
The small scale behavior of
eigenfunctions associated to a possible eigenvalue can then be studied using
renormalization.
Technically, one parametrizes all possible eigenvalues by
the line in $H^1(S;\R)$ through the imaginary part of the tautological one
form (the Abelian differential corresponding to the translation
structure) and then applies the
so-called Kontsevich-Zorich cocycle over the Teichm\"uller flow.
According to the Veech criterion any actual eigenvalue is parametrized by an
element of the ``weak-stable lamination'' associated to an
acceleration of the Kontsevich-Zorich cocycle acting modulo $H^1(S;\Z)$.
Intuitively, eigenfunctions parametrized by an eigenvalue outside the weak
stable lamination would exhibit so much oscillation in small
scales that measurability must be violated.
The core of \cite{AvilaForni2007} is a probabilistic method to
exclude non-trivial intersections of an arbitrary
fixed line in $H^1(S;\R)$ with the weak stable lamination, which uses basic
information on the non-degeneracy of the cocycle.

The problem of weak mixing in the case of $S_5$ was asked by C. McMullen 
during a talk of the first author on \cite{AvilaForni2007} in 2004.
It was realized during discussions with P. Hubert that the probabilistic
method behind \cite{AvilaForni2007} fails for Veech surfaces,
due essentially to a degeneracy
(non-twisting) of the Kontsevich-Zorich cocycle.  Attempts to improve the
probabilistic argument using Diophantine properties of invariant subspaces
turned out to lead to too weak estimates.

In this paper we prove that the locus of possible eigenvalues is
much more constrained in the case of Veech surfaces: eigenvalues must be
parametrized by an element in the ``strong stable lamination'',
consisting of all the
integer translates of the stable space, which is a much simpler object than
the weak stable lamination (considered modulo the strong stable space, the
former is countable, while the latter is typically uncountable).
Direct geometric estimates on the locus of intersection
can be then obtained using much less information on the non-degeneracy
of the cocycle.

In order to obtain this stronger constraint on the locus of possible
eigenvalues we will first carry out an analysis of the
associated eigenfunctions at scales corresponding to renormalizations
belonging to a large compact part of the moduli space (this refines Veech's
criterion, which handles compact sets that are small enough to
be represented in spaces of interval exchange transformations).  This is
followed by a detailed analysis of the excursion to the non-compact part of
the moduli space, which is used to forbid the occurrence of an integer
``jump'' in cohomology along such an excursion.  In doing so, we use
in an essential way the particularly simple nature of the
renormalization dynamics in the non-compact part of the moduli space
$\SL(2,\R)/\Gamma$ of a Veech surface (a finite union of cusps).

The fact that the flow is weakly-mixing in almost every direction
then follows from the absence of atoms in the harmonic measure of the
Kontsevich-Zorich cocycle. To get further and
obtain a bound on Hausdorff dimension of exceptional directions, we use
a Markov coding and quantitative estimates coming from a large deviation
upper bound in Oseledets theorem.

We should point out that it is reasonable to expect that, in the general
case of translation flows, one can construct examples of eigenvalues
which do not come
from the strong stable lamination.  Indeed M. Guenais and F. Parreau
construct suspension flows over ergodic
interval exchange transformations and with piecewise constant roof
function admitting infinitely many independent
eigenvalues (see Theorem~3 of~\cite{GP}), and
this provides many eigenvalues that do not come from the
strong stable lamination (which can only be responsible for a subgroup of
eigenvalues of finite rank).  See also \cite {BDM2}, section 6,
for a different example in a related context.

{\bf Acknowledgements:} This work was partially supported by the ERC
Starting Grant ``Quasiperiodic'' and by the Balzan project of Jacob Palis. 
We would like to thank Pascal Hubert for several discussions and
Jean-Paul Thouvenot for his question about the number of eigenvalues
and the reference~\cite{GP}.


\section{Preliminaries}

\subsection{Translation surfaces, moduli space, $\SL(2,\R)$-action} \label{subsec:tr_surf}
A \emph{translation surface} can also be defined as a
triple $(S,\Sigma,\omega)$ of a closed surface $S$, a non-empty finite set
$\Sigma \subset S$,
and an Abelian differential $\omega$ on $S$ whose zeros are contained in
$\Sigma$ ($\omega$ is holomorphic
for a unique complex structure on $S$)\footnote{A simpler definition
would be to consider $\Sigma$ to be the set of zeros of $\omega$. But in
that case we exclude the flat tori $\C / \Lambda$ with the Abelian differential
$dz$ which has no zero. Except in the special case of the torus, one can safely
take $\Sigma$ to be the zeros of $\omega$.}.  Writing
in local coordinates
$\omega = dz$, we get canonical charts to
$\C$ such that transition maps
are translations. Such a map exists at $x \in S$ if and only if $\omega$ is
non zero at $x$.  The zeros of $\omega$ are the \emph{singularities} of
the translation surface. Conversly, a translation surface $S$ defined as
in the introduction (in terms of a suitable atlas on $S \setminus \Sigma$)
allows one to recover the Abelian differential $\omega$ by declaring that
$\omega=dz$ and extending it uniquely to the marked points.
We write $(S,\omega)$ for the translation surface for which $\Sigma$ is the set
of zeros of $\omega$.

Let $(S,\Sigma,\omega)$ be a translation surface. The form $|\omega|$ defines
a flat metric except at the singularities. For each
$\theta \in \R / 2\pi\Z$ we define the \emph{directional flow} in the
direction $\theta$ as the flow $\phi^{S,\theta}_T: S \rightarrow S$
obtained by integration of the unique vector field $X_\theta$ such
that $\omega(X_\theta) = e^{i \theta}$.  In local charts $z$ such that
$\omega = dz$ we have $\phi^{S,\theta}_T(z) = z + T e^{i \theta}$ for small
$T$. The directional flows are also called \emph{translation flows}.
The \emph{(vertical) flow} of $(S,\omega)$ is the flow
$\phi^{S,\pi/2}$
in the vertical direction. The flow is not defined at the zeros of
$\omega$ and hence not defined for all positive
times on backward
orbits of the singularities. The flows $\phi^{S,\theta}_T$
preserve the volume
form $\frac{1}{2i} \omega \wedge \overline{\omega}$ and the ergodic
properties of translation
flows that we discuss below refer to this measure.

Translation surfaces were introduced to study rational billiards as the
example of the regular polygons $P_n$ in the introduction. Each rational
billiard may be seen as a translation surface by a well known construction
called unfolding or Zemliakov-Katok construction
(see \cite{MasurTabachnikov2002}).

The following results hold for arbitrary translation surfaces:
the directional flow is minimal except for a countable set of directions
\cite{Keane1975}, the translation flow is uniquely ergodic except for a
set of directions of Hausdorff dimension at most $1/2$
\cite{KerckhoffMasurSmillie1986}, \cite{Masur1992}, and the translation flow
is not mixing in any direction \cite {Katok1980}.

The weak mixing property is more subtle.  Indeed, translation flows in a
genus one translation surface are never weakly mixing.  The same property
holds for branched coverings of genus one translation surfaces which
form a \emph {dense} subset of translation surfaces.  However, for
\emph {almost every}
translation surface of genus at least two, the translation flow
is weakly mixing in almost every direction \cite{AvilaForni2007}.
The implicit topological and measure-theoretical notions above refer to
a natural structure on the space of
translation structure that we introduce now.

Let $g,s \geq 1$ and let $S$ be a closed surface of genus $g$, let
$\Sigma \subset S$ be a subset with $\# S=s$ and let
$\kappa = (\kappa_x)_{x \in \Sigma}$ be a family of non-negative integers
such that $\sum \kappa_i = 2g-2$. The set of translation structures on
$S$ with prescribed conical angle $(\kappa_x + 1)2\pi$ at $x$, modulo
isotopy relative to $\Sigma$ forms a manifold $\TT_{S,\Sigma}(\kappa)$.
The manifold structure on $\TT_{S,\Sigma} (\kappa)$ is described by the
so-called \emph{period map} $\Theta: \TT_{S,\Sigma}(\kappa) \rightarrow
H^1(S,\Sigma; \C)$ which associates to $\omega$ its cohomology class in
$H^1(S,\Sigma; \C)$. The period map is locally bijective and provides
natural charts to $\TT_{S,\Sigma}(\kappa)$ as well as an
affine structure and a canonical Lebesgue measure.
We let $\TT^{(1)}_{S,\Sigma}(\kappa)$ denote the hypersurface of area $1$
translation surfaces.

The group $\SL(2,\R)$ acts (on left) on $\TT_{S,\Sigma}(\kappa)$ by postcomposition
on the charts and preserves the hypersurface
$\TT^{(1)}_{S,\Sigma}(\kappa)$. The subgroup of rotations
$r_\theta = \begin{pmatrix}\cos(\theta) & -\sin(\theta) \\ \sin(\theta)
& \cos(\theta) \end{pmatrix}$ acts by
multiplication by $e^{i \theta}$ on $\omega$.
The action of the diagonal subgroup
$g_t = \begin{pmatrix}e^t&0\\0&e^{-t}\end{pmatrix}$ is called the
\emph{Teichm\"uller flow} and transforms
$\omega = \Re(\omega) + i \Im(\omega)$ into
$g_t \cdot \omega = e^t \Re(\omega) + i e^{-t} \Im(\omega)$.
The \emph{stable} and \emph{unstable horocycle flows} are the action
of the matrices $\displaystyle h^-_s = \begin{pmatrix}1&0\\s&1\end{pmatrix}$
and $h_s^+ = \begin{pmatrix}1&s\\0&1\end{pmatrix}$.

The \emph{modular group} $\MCG(S,\Sigma)$ of $(S,\Sigma)$ is the group of
diffeomorphisms of $S$ fixing $\Sigma$ pointwise modulo isotopy relative to
$\Sigma$. It acts discretely discontinuously (on right) on the spaces
$\TT_{S,\Sigma}(\kappa)$ and $\TT^{(1)}_{S,\Sigma}(\kappa)$ via
$(S,\omega) \mapsto (S,\omega \circ d \phi)$. 
The quotient, denoted $\MM_{S,\Sigma}(\kappa)$ or
$\MM^{(1)}_{S,\Sigma}(\kappa)$ is
called a \emph{stratum of the moduli space of translation
surfaces of genus $g$ and $s$ marked points} or shortly a \emph{stratum}
\footnote{Our definition of strata are not standard as we consider a marking. It
matches the standard one up to a finite cover.}
The space
$\MM_{S,\Sigma}(\kappa)$ inherits from $\TT_{S,\Sigma}(\kappa)$ a complex
affine orbifold structure. 
The $\SL(2,\R)$ and $\MCG(S,\Sigma)$ actions on $\TT_{S,\Sigma}(\kappa)$ commute.
Hence the $\SL(2,\R)$ action is well defined on the quotient $\MM_{S,\Sigma}(\kappa)$.
The Lebesgue measure projects on
$\MM_{S,\Sigma}(\kappa)$ (respectively $\MM_{S,\Sigma}^{(1)}(\kappa)$) into
a measure
$\nu$ (resp. $\nu^{(1)}$) called the Masur-Veech measure.
Masur (\cite{Masur1982}) and Veech (\cite{Veech1982}) proved
independently that the measure $\nu^{(1)}$ has finite total mass, 
the action of $\SL(2,\R)$ on each
$\MM_{S,\Sigma}(\kappa)$ is measure preserving
and moreover the Teichm\"uller flow $g_t$ is ergodic on each
connected component of $\MM^{(1)}_{S,\Sigma}(\kappa)$. The moduli space, the Teichm\"uller flow and the
Kontsevich-Zorich cocycle are of main importance
in the results we mentioned above about the ergodic
properties of translation flows.

We will also sometimes
use the notation $\MM_g(\kappa)$ to denote
$\MM_{S,\Sigma}(\kappa)$, where the $(\kappa_j)_{1 \leq j \leq s}$ is
obtained by reordering the $(\kappa_x)_{x \in S}$ in non-increasing order.
As an example, for $n$ even, the surface $S_n$ built from a regular $n$-gon belongs
to the stratum $\MM_{\lfloor n/4 \rfloor}((n-4)/2)$ if $n \equiv 0 \mod 4$ 
or $\MM_{\lfloor n/4 \rfloor}( (n-6)/4, (n-6)/4)$ 
if $n \equiv 2 \mod 4$.

\subsection{Veech surfaces} \label{section:Veech_surfaces}
Our goal in this article is to study the weak-mixing property for
the directional flows in a translation surface with closed
$\SL(2,\R)$-orbits in $\MM_{S,\Sigma}(\kappa)$, which
are called Veech surfaces.

Let us recall that an \emph{affine homeomorphism} of a translation
surface $(S,\Sigma,\omega)$ is a homeomorphism of $S$ which preserves
$\Sigma$ pointwise and is affine
in the charts of $S$ compatible with the translation structure.
An affine homeomorphism $\phi$ has a well defined linear part, denoted by
$d \phi \in \SL(2,\R)$, which is the derivative of the action of $\phi$ in
the charts.  The set of linear
parts of affine diffeomorphisms forms a discrete
subgroup $\Gamma(S,\Sigma,\omega)$ of $\SL(2,\R)$ called
the \emph{Veech group} of $(S,\Sigma,\omega)$. A translation surface is
called a \emph{Veech surface} if its Veech group is a lattice.
The $\SL(2,\R)$ orbit of a Veech surface
is closed in $\MM_{S,\Sigma}(\kappa)$ and is naturally identified with
the quotient $\CC = \SL(2,\R) / \Gamma(S,\Sigma,\omega)$.
The $\SL(2,\R)$ action
on $\CC$ preserves the natural Haar
measure and the Teichm\"uller flow $g_t$ is the geodesic flow on the
unit tangent bundle of the hyperbolic surface $\H / \Gamma(S,\Sigma,\omega)$.

Veech proved that the Veech group of a Veech surface is
never co-compact. Moreover, the cusp excursions may be measured in terms
of systoles as in the well known case
of lattices with $\Gamma = \SL(2,\Z)$. 
A \emph{saddle connection} in a translation surface $(S,\Sigma,\omega)$
is a geodesic segment for the metric $|\omega|$ that start and ends
in $\Sigma$ and whose interior is disjoint from $\Sigma$.
For the square torus, $\C / (\Z 1 + \Z i)$ with the $1$-form $dz$ the set
of saddle connections identifies with primitive vectors (i.e. vector of the form
$pi + q$ with $p$ and $q$ relatively prime integers).
For a translation surface
$(S,\omega)$ the \emph{systole} $\sys(S,\omega)$ of $(S,\omega)$
is the length of the shortest saddle connection in $(S,\omega)$.
Assume that $(S,\omega)$ is a Veech surface and denote $\CC$ its $\SL(2,\R)$-orbit
in $\MM_{S,\Sigma}(\kappa)$.
Then the set $\CC_\epsilon =
\{(S,\omega) \in \CC:\ \sys(S,\omega) \geq \epsilon\}$
forms an exhaustion of $\CC$ by compact sets.

Beyond arithmetic surfaces (covers of the torus ramified over one point)
the first examples of Veech surfaces are the translation surfaces $S_n$
associated to the billiard in the regular polygon with $n$ sides $P_n
\subset \R^2$. The surface $S_n$ is built from $P_n$ ($n$ even)
or from the disjoint union of $P_n$ and $-P_n$
($n$ odd)~\cite{Veech1989}.  In either case, $S_n$ is defined by
identifying every side of $P_n$ with the unique
other side (of either $P_n$ or $-P_n$ according to the parity of $n$)
parallel to it, via translations. 
For them, the Veech group as well
as the trace field was computed by Veech.
\begin{theorem}[\cite{Veech1989}]
Let $S_n$ be the Veech surface associated to the regular polygon with $n$ sides. Then
  \begin{enumerate}
	\item if $n$ is odd, the Veech group of $S_n$ is the triangle group
$\Delta(2,n,\infty)$ with trace field $\Q[\cos(\pi/n)]$;
	\item if $n$ is even, the Veech group of $S_n$ is the triangle group
$\Delta(n/2,\infty,\infty)$ with trace field $\Q[\cos(2\pi/n))]$.
  \end{enumerate}
\end{theorem}
Notice that for $n$ even, $\Delta(n/2,\infty,\infty)$ is a subgroup
of index $2$ of $\Delta(2,n,\infty)$.
Other examples with triangle groups as Veech groups were discovered by
Ward~\cite{Ward1998} and Bouw-M\"oller~\cite{BouwMoeller2010}. Some of them are
obtained from billiards and a general construction with polygon gluings is
provided in~\cite{Hooper2012}. See also the article of Wright~\cite{Wright2013}
which explains these constructions in terms of Schwarz triangle maps.

Beyond triangle groups, Calta~\cite{Calta2004} and McMullen~\cite{McMullen2003} 
introduced an infinite family of Veech surfaces with quadratic trace field in the stratum
$\MM_2(2)$ (the family include $S_5$ and $S_8$). Later on, McMullen~\cite{McMullen2007} proved
that this is the complete list of non-arithmetic surfaces in $\MM_2(2)$
 and that in $\MM_2(1,1)$ the surface $S_{10}$ is the unique
non-arithmetic Veech surface. Other surfaces with quadratic trace
fields were discovered by McMullen~\cite{McMullen2006} in genus $3$ and $4$ and
further studied by Lanneau and Nguyen~\cite{LanneauNguyen2014}.

\subsection{Translation flow of Veech surfaces} \label{subsec:tr_flow_Veech}
Let $(S,\Sigma,\omega)$ be a translation surface and assume
that there exists an affine homeomorphism $\phi$ whose image
under the Veech group $g$ is parabolic.
The direction determined by the
eigenvector of $g$ in $\R^2$ is a \emph{completely periodic direction}
in $S$: the surface $(S,\Sigma,\omega)$ decomposes into a finite union of
cylinders whose waist curves are parallel to it.
Morever, $\phi$ preserves each cylinder and acts as a power of a
Dehn twist in each of them.
We may assume that the eigendirection is vertical, and hence
$g = h_s^-=\begin{pmatrix}1&0\\s&1\end{pmatrix}$ for some real number $s$.
Let $h_1$, $h_2$, \ldots, $h_k$ and $w_1$, $w_2$, \ldots, $w_k$ be the
widths and the heights of the cylinders $C_1$, \ldots, $C_k$ in the
vertical direction.
For each cylinder $C_i$, let $\phi_i$ be the Dehn twist in
$C_i$.  Its linear part is
$g_i = \begin{pmatrix}1&0\\\mu(C_i)^{-1}&1\end{pmatrix}$
where $\mu(C_i) = w_i / h_i$ is the \emph{modulus} of $C_i$ and
the real number $s$ is such that $s / \mu(C_i)$ are integers.
Reciprocally,
a completely periodic direction admits a non-trivial stabilizer
in $\SL(2,\R)$ if and only if the moduli $\mu(C_i)$ of the cylinders are
commensurable (their ratio are rational numbers). Such a direction is
called \emph{parabolic}.

Keane and Kerckhoff-Masur-Smillie theorems about minimality
and unique ergodicity of translation flows (see Section~\ref{subsec:tr_surf})
have the following refinement.
\begin{theorem}[Veech alternative, \cite{Veech1989}]  \label{thm:Veech_alternative}
  Let $(S,\Sigma,\omega)$ be a Veech surface. Then
  \begin{enumerate}
	\item either there exists a vertical saddle connection and
     the vertical direction in $(S,\Sigma,\omega)$ is parabolic;
	\item or the vertical flow is uniquely ergodic.
  \end{enumerate}
\end{theorem}

The $\SL(2,\R)$ orbit of a Veech surface is never compact (any fixed saddle
connection can be shrinked arbitrarily by means of the $\SL(2,\R)$ action,
thus escaping any compact subset of the moduli space).
Nevertheless, the geometry of flat surfaces in the cusps is well understood
and will be crucial in the study of eigenvalues 
(see Section~\ref{section:veech_criterion}).
If $\zeta$ and $\zeta'$ are two saddle
connections in a flat surface $(S,\omega)$ we
let $\zeta \wedge_{\omega} \zeta'$ denote the
number in $\C$ that corresponds to the (signed)
area of the parallelogram determined by $\omega(\zeta)$ and $\omega(\zeta')$.
\begin{theorem}[No small triangles] \label{thm:no_small_triangle}
  Let $(S,\omega)$ be a Veech surface. Then
there exists $\kappa > 0$ such that for any
pair of saddle connections $\zeta$ and $\zeta'$ 
\begin{itemize}
\item either $| \zeta \wedge_{\omega} \zeta'| > \kappa$;
\item or $\zeta$ and $\zeta'$ are parallel
(i.e. $\zeta \wedge_\omega \zeta' = 0$).
  \end{itemize}
\end{theorem}
The above theorem is actually the easy part of a
characterization of Veech surfaces proved in~\cite{SmillieWeiss2010}.
Note that the quantity $\zeta \wedge_{\omega} \zeta'$ is invariant
under the Teichm\"uller flow
(i.e. $\zeta \wedge_{g \cdot \omega} \zeta' = \zeta \wedge_\omega \zeta'$ for any
$g \in \SL(2,\R)$) and corresponds to twice the area of a (virtual)
triangle delimited by $\zeta$ and $\zeta'$. We deduce in particular,
that if there exists a small saddle connection in a Veech surface, then
any other short saddle connection would be parallel to it and that the
smallness is uniform for the whole $\SL(2,\R)$-orbit. More precisely,
\begin{corollary} \label{cor:only_one_family_of_small_geodesics}
Let $\CC$ be a $\SL(2,\R)$-orbit of a Veech surface.
Then there exists $\epsilon>0$
such that for any $u \not \in \CC_\epsilon$ the saddle connections
in $u$ shorter than $1$ are parallel to the direction of the
shortest saddle connection in $u$.
\end{corollary}

\subsection{Holonomy field and conjugates of Veech group} \label{subsec:hol_field_and_conj}
\label{section:Galois_conjugate}
Let $(S,\Sigma,\omega)$ be a translation
surface and let $\Lambda = \omega(H_1(S;\Z)) \subset \C \simeq \R^2$
be the module of periods of $\omega$. In what follows, periods will be sometimes
called holonomies.  Let $e_1$ and $e_2$ be two non-parallel
elements in $\Lambda$. The \emph{holonomy field} of $(S,\Sigma,\omega)$ is the
smallest field $k$ of $\R$ such that any element in $\Lambda$ may be
written as $k$-linear combination of $e_1$ and $e_2$. 
For any subgroup $\Gamma$ of $\SL(2,\R)$, the \emph{trace field} of $\Gamma$ is the field generated by the traces of the element of $\Gamma$.

\begin{theorem}[Gutkin-Judge \cite{GutkinJudge2000}, Kenyon-Smillie \cite{KenyonSmillie2000}]
Let $(S,\Sigma,\omega)$ be a Veech surface. Then its holonomy field
$k$ coincides with the trace field of its Veech group. The degree of
$k$ over $\Q$ is at most the genus of $S$ and the rank of
$\Lambda = \omega(H_1(S;\Z))$ is twice the degree of $k$ over $\Q$.
\end{theorem}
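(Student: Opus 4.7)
The plan is to prove three assertions: (1) equality of the trace field $k$ of $\Gamma$ and the holonomy field $K$ of $(S,\omega)$, (2) the rank formula $\operatorname{rank}_{\Z}(\Lambda) = 2[K:\Q]$, and (3) the genus bound $[K:\Q] \leq g$. Once (1) is established, (2) and (3) reduce to elementary linear algebra, so the bulk of the work lies in (1).

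The easy inclusion $k\subseteq K$ goes as follows. Fix non-parallel periods $e_1,e_2\in\Lambda$, which form an $\R$-basis of $\C$; by the definition of the holonomy field, $\Lambda\otimes_{\Z}\Q \subseteq K\cdot e_1 + K\cdot e_2$. For any affine diffeomorphism $\phi$, the linear part $A=d\phi$ is $\R$-linear on $\C$ and preserves $\Lambda$, hence preserves $\Lambda\otimes\Q$; so the matrix of $A$ in the basis $(e_1,e_2)$ has entries in $K$, and $\operatorname{tr}(A)\in K$. Taking the $\Q$-subfield generated by all such traces gives $k\subseteq K$.

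The reverse inclusion $K\subseteq k$ is the crux and what I expect to be the main obstacle. The approach I would take exploits the cylinder decompositions coming from parabolic elements of $\Gamma$. Since $\Gamma$ is a non-cocompact lattice in $\SL(2,\R)$, fix a parabolic $\gamma_1 \in \Gamma$; by Theorem~\ref{thm:Veech_alternative} its fixed direction is completely periodic with commensurable cylinder moduli, and after rotation it may be taken vertical. The primitive parabolic in this direction is generated by a single Dehn twist whose modulus $\mu$ is determined by $\operatorname{tr}(\gamma_1^n)$ and so lies in $k$; the ratios $w_i/h_i$ of cylinder widths to heights are then rational multiples of $\mu$, placing them in $k$ as well. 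A second non-commuting parabolic $\gamma_2\in\Gamma$ — which exists because a lattice in $\SL(2,\R)$ has more than one parabolic fixed point on $\partial\H$ — produces a transverse cylinder decomposition, and computing traces of products $\gamma_1^m \gamma_2^n$ yields enough algebraic relations to express the change-of-basis between the two decompositions over $k$. Assembling these relations, as carried out by Kenyon--Smillie via the $J$-invariant $J(S)\in\bigwedge^{2}_{\Q}\Lambda$, shows every element of $\Lambda\otimes\Q$ is a $k$-linear combination of $e_1,e_2$, hence $K\subseteq k$. The subtlety — and why this step is nontrivial — is that although $\Gamma\hookrightarrow\SL(2,\R)$ a priori sits only in $\SL(2,K)$, one must use the geometric content of the cylinder decomposition to upgrade this to a $k$-rational realization on $\Lambda\otimes\Q$.

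Given $k=K$, assertion (2) is immediate: $\Lambda\otimes\Q = k\cdot e_1\oplus k\cdot e_2$ has $\Q$-dimension $2[k:\Q]$, which equals $\operatorname{rank}_{\Z}(\Lambda)$. Assertion (3) then follows from the surjection $\omega\colon H_1(S;\Z)\twoheadrightarrow\Lambda$: it forces $\operatorname{rank}_{\Z}(\Lambda)\leq 2g$, whence $[k:\Q]\leq g$.
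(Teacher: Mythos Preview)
The paper does not give a proof of this theorem: it is quoted as a result from the cited references of Gutkin--Judge and Kenyon--Smillie, and no argument is supplied in the text. So there is no ``paper's own proof'' to compare against; your write-up is effectively a summary of the literature proofs.

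That said, your sketch has one genuine gap worth flagging. In deducing assertion~(2) you assert that $\Lambda\otimes_{\Z}\Q = k\cdot e_1 \oplus k\cdot e_2$, but the definition of the holonomy field only gives the inclusion $\Lambda\otimes\Q \subseteq k\cdot e_1 \oplus k\cdot e_2$; minimality of $k$ as a \emph{field} generated by the coordinates does not force the coordinates to span $k$ as a $\Q$-vector space. (For a non-Veech counterexample: if $\Lambda$ were generated by $e_1,e_2,\sqrt{2}\,e_1$, the holonomy field would be $\Q(\sqrt{2})$ but $\dim_\Q(\Lambda\otimes\Q)=3$, not $4$.) What is missing is precisely the $k$-module structure on $\Lambda\otimes\Q$: if $\phi$ is a hyperbolic affine automorphism with trace $t$, then Cayley--Hamilton for $d\phi$ gives $t\cdot v = d\phi(v) + d\phi^{-1}(v) \in \Lambda$ for every $v\in\Lambda$, so $\Lambda\otimes\Q$ is a $\Q(t)$-module; since $\Q(t)=k$ and $e_1,e_2\in\Lambda$, the reverse inclusion $k\,e_1\oplus k\,e_2 \subseteq \Lambda\otimes\Q$ follows. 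This hyperbolic-element argument is in fact the core of the Kenyon--Smillie proof, whereas your sketch for $K\subseteq k$ leans on two parabolics and the $J$-invariant (closer to Gutkin--Judge); either route works, but you should invoke the module structure explicitly before claiming the rank formula.
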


We will need two important facts about the holonomy field of a Veech surface.
\begin{theorem}[Gutkin-Judge \cite{GutkinJudge2000}]
A Veech surface $(S,\Sigma,\omega)$ is arithmetic (i.e. a ramified cover of a torus over one point)
if and only if its holonomy field is $\Q$.
\end{theorem}

\begin{theorem}
[\cite{LanneauHubert2006}] \label{thm:trace_field_totally_real}
The holonomy field of a Veech surface is totally real.\footnote {Recall that a
subfield $k \subset \R$ is called totally real if its image under
any embedding $k \rightarrow \C$ is contained in $\R$.}
\end{theorem}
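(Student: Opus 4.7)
The plan is to exhibit the holonomy field $k$ (which equals the trace field by the theorem of Gutkin-Judge and Kenyon-Smillie cited just above) as a totally real subfield of the rational endomorphism algebra $\mathrm{End}_{\Q} J(S)$ of the Jacobian, and then conclude by the classical positivity of the Rosati involution. Because $\Gamma$ is a non-elementary lattice in $\SL(2,\R)$ whose traces generate $k$, a standard genericity argument produces a single hyperbolic $g \in \Gamma$ with $\Q(\operatorname{tr} g) = k$. Lift $g$ to an affine pseudo-Anosov $\phi$ with dilatation $\lambda > 1$, so $t := \operatorname{tr} g = \lambda + \lambda^{-1}$ generates $k$ over $\Q$.

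Next, consider the symplectic integer matrix $\phi^* \in \mathrm{End}_{\Z} H^{1}(S;\Z)$ and set $T := \phi^* + (\phi^*)^{-1}$. Symplecticity for the intersection form $\cap$ gives $\cap(Tx,y) = \cap(x,Ty)$. On the tautological plane $V_{1} = \R\Re\omega \oplus \R\Im\omega$ the operator $\phi^*$ has eigenvalues $\lambda, \lambda^{-1}$, so $t$ is an eigenvalue of $T$, and since $\phi^*$ is integral all $\Q$-Galois conjugates $\sigma(t) = \sigma(\lambda) + \sigma(\lambda)^{-1}$ are eigenvalues as well. Thus $\Q[T] \subset \mathrm{End}_{\Q} H^{1}(S;\Q)$ is a semisimple commutative $\Q$-algebra of which $k = \Q(t)$ is a direct factor. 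The central task is to upgrade the $k$-component of $\Q[T]$ to a genuine holomorphic endomorphism of $J(S)$. Since $\phi$ is affine but not holomorphic, $\phi^*$ does not preserve the Hodge decomposition $H^{1}(S;\C) = H^{1,0} \oplus H^{0,1}$, so $T$ is a priori only $\cap$-symmetric, not Rosati-symmetric. Working over the Teichm\"uller curve $\CC = \SL(2,\R)/\Gamma$, and using Forni's Theorem~\ref{thm:Forni_inequality} together with the flatness of the Kontsevich-Zorich cocycle, one decomposes the Hodge bundle as $\HH = \HH_{\mathrm{taut}} \oplus \HH'$ and further splits $\HH'$ into flat sub-VHS indexed by the non-trivial embeddings of $k$, with monodromy on each piece a Galois conjugate of $\Gamma \subset \SL(2,\R)$; on such a piece $T$ acts by the scalar $\sigma(t)$, and holomorphicity follows from the sub-VHS property. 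This realizes $k \hookrightarrow \mathrm{End}_{\Q} J(S)$ by Rosati-self-adjoint endomorphisms, at which point Albert's classification of positive involutions on polarized abelian varieties forces $k$ to be totally real.

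The main obstacle is the middle step, producing the real-multiplication splitting of the Hodge bundle. Bare $\cap$-symmetry of $T$ does not suffice -- over $\Q$ a matrix symmetric for an alternating form can have non-real eigenvalues, as elementary $2 \times 2$ examples show -- so one genuinely needs the positivity of the Hodge polarization. Constructing the splitting is essentially the real-multiplication structure on Teichm\"uller curves developed by McMullen in genus two and by M\"oller in general; once it is in hand, the reduction to Albert's theorem is purely formal.
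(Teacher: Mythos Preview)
The paper does not actually prove this theorem; it cites \cite{LanneauHubert2006} and adds only the one-line remark that ``the result uses the fact that the Veech group of a Veech surface contains parabolic elements.'' The Hubert--Lanneau argument is elementary and direct: in a parabolic (completely periodic) direction the surface decomposes into cylinders with commensurable moduli, and one extracts total reality of the trace field from the arithmetic of those moduli. No Hodge theory or abelian-variety machinery is involved.

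Your route is genuinely different and considerably heavier: you are essentially reproving M\"oller's real-multiplication theorem for Teichm\"uller curves and then invoking Albert's classification. This is a legitimate strategy, and you correctly identify the crux --- upgrading the $\cap$-symmetric operator $T$ to an endomorphism respecting the Hodge filtration --- as the real content. Two points of caution about your execution, however. First, Forni's spectral-gap inequality (Theorem~\ref{thm:Forni_inequality}) is not what produces the splitting of $\HH'$ into flat sub-VHS; what you need there is Deligne's semisimplicity for polarized variations of Hodge structure, which is the engine behind M\"oller's argument. Forni's bound separates the tautological piece but says nothing about decomposing the complement. Second, your phrasing that the monodromy on each piece is ``a Galois conjugate of $\Gamma \subset \SL(2,\R)$'' already presumes the conjugates are real, which is exactly what you are trying to prove; the honest version is that the pieces are eigenspaces for $T$ on which it acts by the various (a priori complex) conjugates $\sigma(t)$, and only after establishing that each piece is a polarized sub-VHS does Rosati-positivity force $\sigma(t) \in \R$.

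In short: your outline can be made into a correct proof, but it amounts to quoting M\"oller rather than Hubert--Lanneau, and the justification you give for the key VHS splitting needs to be replaced by the correct one.
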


Now, we define the Galois conjugates of the Veech group.
Let $(S,\Sigma,\omega)$ be a Veech surface, let $\Gamma$ be its
Veech group and let $k$ be its holonomy field.
Let $e_1$ and $e_2$ be two non-parallel elements in the set
of holonomies $\Lambda = \omega(H_1(S;\Z))$.
For each element $v \in H_1(S;\Z)$ there exist
unique elements $\alpha$ and $\beta$ of $k$ such that
$\omega(v) = \alpha e_1 + \beta e_2$.
The maps $\alpha$ and $\beta$ are linear with values in $k$, in other words
they belong to $H^1(S;k)$, and moreover,
the tautological space $V = \R \Re(\omega) \oplus \R \Im(\omega)$ can be
rewritten as $V=\R \alpha \oplus \R \beta$ in $H^1(S;\R)$.
Note that an alternative definition of the trace field would be the field of
definition of the plane $\R \Re(\omega) + \R \Im(\omega)$ in
$H^1(S; \R)$.
For any embedding $\sigma: k \rightarrow \R$, we may define new linear
forms $\sigma \circ \alpha$ and $\sigma \circ \beta$.  Those linear forms
generate a $2$ dimensional subspace in $H^1(S;\R)$.
The subspace does
not depend on the choice of $\alpha$ and $\beta$ and
we call it the \emph{conjugate by $\sigma$} of the
tautological subspace $V$.
This subspace (which is indeed defined in $H^1(S;k)$)
will be denoted by $V^\sigma$.
Because the action of the affine group on homology
is defined over $\Z$ and preserves $V$, it preserves as well the conjugates
$V^\sigma$. The following result appeared in~\cite{Moeller2006} in terms
of decomposition of variation of Hodge structures.
\begin{theorem}[\cite{Moeller2006}]
Let $(S,\Sigma,\omega)$ be a Veech surface,
$k$ its holonomy field and $V = \R \Re \omega \oplus \R \Im \omega$
be the tautological subspace. Then for any embedding
$\sigma: k \rightarrow \R$ the subspace $V^\sigma$ is invariant
under the action of the affine group of $(S,\Sigma,\omega)$. Moreover,
the space generated by the $[k\mathop:\Q]$ subspaces $V^\sigma \subset H^1(S;\R)$
is the smallest rational subspace of $H^1(S;\R)$ containing $V$ and
is the direct sum of the $V^\sigma$.
\end{theorem}
The fact that the sum is direct follows from the presence of
hyperbolic elements in the Veech group
(see Theorem~28 of \cite{KenyonSmillie2000}).

For an affine homeomorphism $\phi$ of a translation surface
$(S,\Sigma,\omega)$ we have
\[
\begin{pmatrix}\phi^* \Re(\omega)\\ \phi^* \Im(\omega))\end{pmatrix}
=
D\phi \cdot \begin{pmatrix} \Re(\omega) \\ \Im(\omega) \end{pmatrix}.
\]
Hence, the Veech group $\Gamma$ is canonically identified to the action
of the affine group on the tautological subspace
$V = \R \Re\omega \oplus \R \Im\omega$.
The choice of two elements of $H_1(S;\Z)$ with non-parallel holonomy
provides an identification of $\Gamma$ as a subgroup of $\SL(2,k)$.
Given an embedding of $k$ in $\R$ we may conjugate the coefficients
of the matrices in $\SL(2,k)$ and get a new embedding of $\Gamma$
into $\SL(2,\R)$. This embedding is canonically identified to the
action of the affine group on the conjugate of the tautological bundle
$V^\sigma$. We denote by $\Gamma^\sigma$ this group and call it
the \emph{conjugate of the Veech group} by $\sigma$.

\subsection{Kontsevich-Zorich cocycle} \label{subsec:KZ}
Over a Teichm\"uller space $\TT_{S,\Sigma}(\kappa)$, let us consider the
trivial cocycle $g_t \times id$ on $\TT_{S,\Sigma}(\kappa) \times
H^1(S;\R)$. The modular group
$\MCG(S,\Sigma)$ acts on $\TT_{S,\Sigma}(\kappa) \times H^1(S; \R)$ and the
quotient bundle is a flat orbifold vector bundle over
$\MM_{S,\Sigma}(\kappa)$ called the \emph{Hodge bundle}.
The \emph{Kontsevich-Zorich cocycle} is the projection of
$g_t \times id$ to the Hodge bundle. We will also need a
slightly different form of the Kontsevich-Zorich cocycle,
namely the projection of $g_t \times id$
on $\TT_{S,\Sigma}(\kappa) \times H^1(S \backslash \Sigma; \R)$
that we call the \emph{extended Kontsevich-Zorich cocycle} (on the \emph
{extended Hodge bundle}).

We will often use a discrete version of the Teichm\"uller flow and
the Kontsevich-Zorich cocycle built as follows. Let $Q$ be a transveral to the Teichm\"uller flow which is
compact and simply connected. There is a natural trivialization of the
Hodge bundle and extended Hodge bundle over $Q$ given by the Gauss-Manin connection
(there is a unique way to identify two nearby fibers preserving the integer
lattice). Once we fixed a reference based point $(S,\Sigma,\omega) \in Q$, the Poincar\'e
section $F:Q \rightarrow Q$ of the Teichm\"uller flow hence provides two cocycles
$A: Q \rightarrow \Sp(H^1(S;\R))$ and $A': Q \rightarrow \Sp(H^1(S \backslash \Sigma; \R))$ that
preserve the integer lattices.

In the case of Veech surfaces, the cocycles $A:Q \rightarrow \Sp(H^1(S;\R))$
and $A': Q \rightarrow \Sp(H^1(S \backslash \Sigma \Z; \R))$ preserve the tautological
bundle $V$ and its conjugates $V^{\sigma}$. As each homotopy class of closed curve
on a hyperbolic surface has either a geodesic or a horocyclic representative, one can
see that the matrices $A(x)$ which appear in this discrete version are induced
by affine homeomorphisms of the surface $(S,\Sigma,\omega)$.

Because the modular group
acts by symplectic transformation on $H^1(S;\R)$ (with respect to the intersection form)
the $2g$ Lyapunov exponents
$\lambda_1^\mu \geq \lambda_2^\mu \geq \ldots \geq \lambda_{2g}^\mu$ of the
Kontsevich-Zorich cocycle satisfy
\[
\forall 1 \leq k \leq g, \quad \lambda^\mu_k = -\lambda_{2g-k-1} \geq 0.
\]
Because of the natural injection
$H^1(S; \R) \rightarrow H^1(S \backslash \Sigma;\R)$,
the Lyapunov spectrum of the extended Kontsevich-Zorich cocycle
contains the one of the Kontsevich-Zorich cocycle. It may be proved
that the remaining exponents are $s-1$ zeros where $s$ is the
cardinality of $\Sigma$.

The Kontsevich-Zorich cocycle preserves the tautological bundle and
one sees directly from the definition that
the Lyapunov exponents on the tautological bundle
are $1$ and $-1$.  For the remaining exponents we have the
following inequality.
\begin{theorem}[Forni \cite{Forni2002}] \label{thm:Forni_inequality}
Let $\mu$ be an ergodic invariant probability measure of the
Teichm\"uller flow on some stratum $\MM_{S,\Sigma}(\kappa)$. Then the
second Lyapunov exponent $\lambda^\mu_2$ of the Kontsevich-Zorich
cocycle satisfies $1 > \lambda^\mu_2$.
\end{theorem}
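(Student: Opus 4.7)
The plan is to control the Kontsevich--Zorich cocycle via the \emph{Hodge norm} on the Hodge bundle. Every class $c \in H^1(S;\R)$ admits a unique harmonic representative with respect to the complex structure of $(S,\omega)$, which may be written as $\Re(\phi_c)$ for a holomorphic one-form $\phi_c$; the Hodge norm is $\|c\|_\omega^2 := \frac{i}{2}\int_S \phi_c \wedge \overline{\phi_c}$. Because the Kontsevich--Zorich cocycle is the Gauss--Manin transport along $g_t$-orbits, its Lyapunov exponents measure the exponential growth rate of $\|c\|_{g_t\omega}$ for a fixed cohomology class $c$.

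First, I would derive Forni's infinitesimal formula for $\frac{d}{dt}\log \|c\|_{g_t\omega}$. The deformation of the complex structure along $g_t$ is governed by the Beltrami differential $\overline{\omega}/\omega$, and a direct computation yields a formula of the form
\[
\frac{d}{dt}\Big|_{t=0}\log \|c\|_{g_t\omega}
\;=\; -\frac{\Re\int_S (\phi_c/\omega)^2\,|\omega|^2}{\int_S |\phi_c|^2}.
\]
The Cauchy--Schwarz inequality shows that the absolute value of the right-hand side is at most $1$, with equality at $t=0$ if and only if $\phi_c$ is a complex multiple of $\omega$, i.e.\ $c$ lies in the tautological plane $V = \R\,\Re(\omega)\oplus\R\,\Im(\omega)$. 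Integrating along a $\mu$-generic orbit and applying the Oseledets theorem yields $\lambda_1^\mu \le 1$, matching the known value $\lambda_1^\mu = 1$ coming from $V$.

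To upgrade this to the strict inequality $\lambda_2^\mu < 1$, I would apply the same analysis to the exterior square cocycle on $\Lambda^2 H^1(S;\R)$, whose top Lyapunov exponent is $\lambda_1^\mu + \lambda_2^\mu$. The Cauchy--Schwarz argument on bivectors $c_1 \wedge c_2$ produces a pointwise bound of the form $1 + \Phi(\omega, c_2)$, where $\Phi \le 1$ always, and $\Phi(\omega, c_2) = 1$ forces $\phi_{c_2}$ to be a complex multiple of $\omega$ modulo the tautological line. Equivalently, $\Phi$ is controlled by the eigenvalues of the Hermitian form $B_\omega$ on $H^{1,0}(S,\omega)$ defined by $B_\omega(\psi_1, \psi_2) := \int_S (\psi_1/\omega)\overline{(\psi_2/\omega)}\,|\omega|^2$, whose top eigenvalue is $1$ and is simple, attained only in the $\omega$-direction.

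The main obstacle is passing from this pointwise bound to a strict integrated inequality, since the second Oseledets direction might align with the tautological line along typical orbits and leave $\Phi$ arbitrarily close to $1$. Here Forni's core technical input is required: the subleading eigenvalues of $B_\omega$ are almost everywhere strictly less than $1$, and the degeneracy locus where the second eigenvalue touches $1$ is a proper real-analytic subset of the stratum which cannot carry a Teichm\"uller-invariant probability measure supported on a full-measure set of $\mu$. Combined with the Oseledets direction for $\lambda_2^\mu$ being transverse to $V$ on a full $\mu$-measure set, this forces $\int \Phi\, d\mu < 1$, so ergodicity of $g_t$ gives $\lambda_1^\mu + \lambda_2^\mu < 2$ and hence $\lambda_2^\mu < 1$.
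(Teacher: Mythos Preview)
The paper does not prove this statement; it is cited from Forni~\cite{Forni2002} and followed only by Remark~\ref{bound}, which records the essential content of Forni's argument: there is a Hodge norm for which the cocycle restricted to the \emph{symplectic orthogonal} $V^\perp$ of the tautological plane has operator norm strictly less than $e^t$ at every point and every time $t>0$.

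Your first two paragraphs are on target and match that remark: the variational formula for $\tfrac{d}{dt}\log\|c\|_{g_t\omega}$ together with Cauchy--Schwarz gives the pointwise bound, with equality exactly when $c\in V$. You also correctly note that the top eigenvalue of $B_\omega$ is simple. But from that point on you overcomplicate matters and introduce a genuine confusion. Since the cocycle is symplectic and $V$ is invariant, $V^\perp$ is an invariant subbundle, and the Oseledets space for $\lambda_2^\mu$ lies in $V^\perp$. On $V^\perp$ the infinitesimal growth rate is bounded by the second eigenvalue $\Lambda_2(\omega)$ of $B_\omega$, which is strictly less than $1$ at \emph{every} point precisely because the top eigenvalue is simple. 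Hence
\[
\lambda_2^\mu \;\le\; \int \Lambda_2(\omega)\,d\mu(\omega) \;<\; 1,
\]
the last inequality holding because $\Lambda_2<1$ everywhere and $\mu$ is a probability measure. No passage to $\Lambda^2 H^1$ is needed.

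Your ``main obstacle'' paragraph misidentifies the difficulty. The worry that the second Oseledets direction ``might align with the tautological line'' cannot occur, since that direction lives in the invariant complement $V^\perp$. And the ``degeneracy locus where the second eigenvalue touches $1$'' is empty, by your own observation that the top eigenvalue is simple; invoking a proper real-analytic locus that fails to support invariant measures is unnecessary here. That kind of argument is what Forni uses for the much harder inequality $\lambda_g^\mu>0$, not for $\lambda_2^\mu<1$. So your sketch is salvageable, but the last third should be replaced by the direct two-line argument above, which is exactly what the paper's remark is pointing to.
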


\begin{rem} \label {bound}
Forni's proof indeed shows that there exists a natural Hodge norm on the
Hodge bundle such that for any $x \in
\MM_{S,\Sigma}(\kappa)$, the Kontsevich-Zorich cocycle starting from $x$
has norm strictly less than $e^t$ at any time $t>0$, when restricted to the
symplectic orthogonal to the tautological space. 
\end{rem}

Given a Veech surface $(S,\Sigma,\omega)$,
the conjugates of the tautological subspace $V^\sigma$ are well defined over the
whole Teichm\"uller curve: they form subbundles of the
Hodge bundle invariant for the Kontsevich-Zorich cocycle.
In other words, we may restrict the Kontsevich-Zorich cocycle to any
of the $V^\sigma$ and consider the associated pair $(\lambda^\sigma,-\lambda^\sigma)$
of Lyapunov exponents. The following result shows that these exponents are non-zero.
\begin{lemma}[\cite{BouwMoeller2010}] \label{lem:separation_LE_Veech_surfaces}
Let $(S,\Sigma,\omega)$ be a Veech surface, $k$ its holonomy field
and $V$ be the tautological subbundle of the Hodge bundle
of its $\SL(2,\R)$ orbit. Then for any non-identity embedding
$\sigma: k \rightarrow \R$, the non-negative Lyapunov exponent
$\lambda^\sigma$ of the Kontsevich-Zorich cocycle restricted to
$V^\sigma$ satisfies $0 < \lambda^\sigma < 1$.
\end{lemma}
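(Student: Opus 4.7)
The plan is to establish the upper bound $\lambda^\sigma < 1$ and the lower bound $\lambda^\sigma > 0$ separately.

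For the upper bound, the plan is to show that for $\sigma \neq \id$ the subspace $V^\sigma$ is contained in the symplectic orthogonal $V^\perp$ of the tautological bundle, so that Remark~\ref{bound} immediately gives strictness. The Veech group $\Gamma$ acts on $V$ via its given embedding $\Gamma \hookrightarrow \SL(2,\R)$ (character $g \mapsto \operatorname{tr}(g)$) and on $V^\sigma$ via the Galois conjugate $\Gamma^\sigma$ (character $g \mapsto \sigma(\operatorname{tr}(g))$). Both are irreducible $2$-dimensional symplectic representations, and they are non-isomorphic because their characters disagree (since $\sigma$ is nontrivial on the trace field $k$). The intersection form restricts to a $\Gamma$-equivariant pairing $V \times V^\sigma \to \R$, which must vanish by Schur's lemma. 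Hence $V^\sigma \subset V^\perp$, and Remark~\ref{bound} bounds the Kontsevich--Zorich cocycle on $V^\sigma$ strictly inside $e^t$ in Hodge norm, yielding $\lambda^\sigma < 1$.

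For the lower bound, the plan is to deduce positivity of $\lambda^\sigma$ from the Zariski density of the monodromy of the locally constant cocycle on $V^\sigma$. First, the lattice $\Gamma \subset \SL(2,\R)$ is Zariski dense, and since Zariski density of a subgroup of $\SL(2,k)$ is preserved when the embedding $\sigma: k \hookrightarrow \R$ is applied entrywise to matrix coefficients, the conjugate group $\Gamma^\sigma$ is also Zariski dense in $\SL(2,\R)$. Next, in the Markov model constructed in the previous subsection, the return matrices $A^{(l)}|_{V^\sigma}$ take values in $\Gamma^\sigma$ and generate a sub-semigroup whose Zariski closure is still $\SL(2,\R)$ (the return maps to a Poincar\'e section of the geodesic flow on $\SL(2,\R)/\Gamma$ generate a finite-index subgroup of $\Gamma$, which remains Zariski dense). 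Finally, one invokes a Furstenberg-type alternative for $\SL(2,\R)$-valued locally constant cocycles: vanishing of the top Lyapunov exponent would force the cocycle to be measurably cohomologous to one taking values in a compact subgroup of $\SL(2,\R)$, which is incompatible with Zariski density of the monodromy.

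The main obstacle is making the last implication precise in the present, non-random-walk setting. One option is to invoke a general zero-exponent rigidity result for $\SL(2,\R)$-cocycles with Zariski dense monodromy (e.g., of Bonatti--Gomez-Mont or Avila--Viana type); another is to argue directly by noting that $\lambda^\sigma = 0$ would produce, via the Oseledets decomposition, a measurable $\Gamma^\sigma$-invariant section of the projective bundle $\P(V^\sigma)$, and then to use Furstenberg's lemma that Zariski dense subgroups of $\SL(2,\R)$ admit no finite invariant subset of $\P^1(\R)$ (and no non-atomic invariant probability) to rule this out.
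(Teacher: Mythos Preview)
Your overall approach matches the paper's. For the upper bound the paper simply cites Forni's inequality (Theorem~\ref{thm:Forni_inequality}): since the $V^\sigma$ form a direct sum and the exponent $+1$ is simple and carried by $V$, necessarily $\lambda^\sigma \leq \lambda_2 < 1$. Your Schur argument showing $V^\sigma \subset V^\perp$ and then invoking Remark~\ref{bound} is a valid, slightly more explicit alternative. For the lower bound, your first option is essentially what the paper does: it applies the \cite{AV} simplicity criterion to conclude that $\lambda^\sigma = 0$ would force the group $G^\sigma$ generated by the return matrices on $V^\sigma$ to be degenerate (compact, or stabilising a line or a pair of lines), hence solvable, and then rules this out by exhibiting a free subgroup of $G$ via ping-pong with two hyperbolic return matrices having distinct axes. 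Your Zariski-density formulation is equivalent, though the finite-index claim for the subgroup generated by return matrices is stronger than what is needed or established.

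Your second option, however, has a gap. When $\lambda^\sigma = 0$ the Oseledets decomposition of $V^\sigma$ is trivial --- the entire plane is a single Lyapunov block --- so Oseledets alone does not produce any measurable invariant section of $\P(V^\sigma)$. To obtain an invariant structure on the projective bundle in the zero-exponent regime one needs an additional ingredient (e.g.\ a Ledrappier-type invariance principle, which yields an invariant family of probability measures on the fibres), and even then one must exclude a non-atomic stationary measure, which requires the full strongly-irreducible-plus-proximal Furstenberg argument rather than merely the absence of finite invariant subsets of $\P^1$. This route can be completed, but it is not more direct than the first option; you should rely on that one.
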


Note that the upper bound can be deduced from Theorem~\ref{thm:Forni_inequality}.
The lower bound can also be obtained from the fact that the group $\Gamma^\sigma$ is non-elementary and
the hyperbolicity properties of the geodesic flow on $\CC$.

One consequence of the upper bound for $\lambda^\sigma$ is the following result
that will be used to prove Theorem~\ref{thm:non_wm_expl_for_quad_trace_field}.
\begin{proposition}[\cite{Moeller2006}] \label{prop:conjugates_indiscrete}
  Let $(S,\Sigma,\omega)$ be a Veech surface, $k$ its holonomy field and
  $\Gamma$ its Veech group. Then, for any non-identity embedding $\sigma: k
  \rightarrow \R$ the group $\Gamma^\sigma$ is non-discrete.
\end{proposition}

\section{Eigenfunctions in Veech surfaces} \label{section:veech_criterion}

Let $(S,\Sigma,\omega)$ be a translation surface and
$\phi_t:S \rightarrow S$ the vertical flow. We say that
$\nu \in \R$ is an \emph{eigenvalue} of $\omega$ if there exists
a non-zero measurable function
$f:S \rightarrow \C$ such that for almost every
$x \in S$,we have $f(\phi_T(x)) = \exp(2 \pi i\nu T) f(x)$ for all $T \in \R$.
The eigenvalue is \emph{continuous} if the map $f$ may be chosen
continuous.  The flow is \emph{weak-mixing} if it admits no
eigenvalue except $0$ with multiplicity one.

The Veech criterion that appeared in~\cite{Veech1984} was of main
importance in \cite{AvilaForni2007} to prove the genericity of
weak-mixing among translation flows.  This criterion depends on the
consideration of appropriate compact transversal to the Teichm\"uller
flow which is ``small enough'' to fit inside ``zippered rectangles'' charts
and also satisfy some additional boundedness properties.

\begin{theorem}[Veech criterion] \label{thm:Veech_criterion}
Let $\MM_g(\kappa)$ be a stratum of translation surfaces.
For all $(S,\Sigma,\omega)$ in $\MM_g(\kappa)$ there exists a small 
compact transversal $\Delta$ of the Teichm\"uller flow containing $(S,\Sigma,\omega)$
such that the following holds. 
Let $A_n$ for all $(S,\Sigma,\tau)$ in that transversal that is
recurrent and admits an eigenvalue $\nu$, the values of the Kontsevich-Zorich cocycle
$A_n(\tau)$ for that transversal satisfy
Let $A_n: \Delta \rightarrow \Sp(2g,\Z)$ be the
Kontsevich-Zorich cocycle on that transversal.
For every $x \in \Delta$ such that its forward Teichm\"uller geodesics comes back
infinitely often to $\Delta$, we have
\[
\lim_{n \to \infty} \dist(A_n(x) \cdot (\nu \Im (\tau_x)),
H^1(S \backslash \Sigma,\Z)) = 0,
\]
where $\tau_x$ is the differential associated to the surface $x \in \Delta$.
\end{theorem}

The above theorem holds in great generality as soon as the dynamical system
is described by Rokhlin towers (see \cite{BressaudDurandMaass}).

Veech's criterion tells us that we can prove that the vertical flow is weak
mixing if we can show that the line $\R \Im(\omega)$ intersects the
``weak stable lamination'', the set of all
$w \in H^1(S \backslash \Sigma; \R)$ such that
\[
\lim_{n \to \infty} \dist(A_n \cdot w,
H^1(S \backslash \Sigma,\Z)) = 0,
\]
only at the origin.  Unfortunately, the nature of the weak stable lamination
is rather complicated.  It is of course a union of translates of the
stable space, the set of all
$w \in H^1(S \backslash \Sigma; \R)$ such that
\[
\lim_{n \to \infty} \dist(A_n \cdot w,0) = 0,
\]
and it contains the ``strong stable space'' consisting of
the integer translates of the stable space. 
However, in general it is much larger, being transversely uncountable.

The main objective of this section is to
show that for Veech surfaces, any eigenvalue $\nu$ must be such
that $\nu \Im(\omega)$ must belong to the smaller (and much simpler)
strong stable space.

\begin{theorem} \label{thm:Veech_criterion_for_Veech_surfaces}
  Let $(S,\Sigma,\omega)$ be a Veech surface with no
vertical saddle connection
and whose linear flow admits an eigenvalue $\nu$. Consider a
compact transversal $\Delta$ large enough so that the forward Teichm\"uller
geodesic of $(S,\Sigma,\omega)$ comes back infinitely often to $\Delta$.

Let $A: \Delta \rightarrow \Sp(H^1(S \backslash \Sigma, \Z))$ the associated  
Kontsevich-Zorich cocycle. Then there exists
$v \in H^1(S \backslash \Sigma; \Z)$ such that
\[ \lim_{n \to \infty} A_n \cdot (\nu \Im(\omega) - v) = 0. \]
\end{theorem}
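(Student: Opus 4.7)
Set $w := \nu \Im(\omega)$. The conclusion $A_n(w-v) \to 0$ is equivalent to producing a sequence of integer approximants $v_n := A_n v$ forming a single $A$-orbit. The algebraic heart of the argument is the following: if one has \emph{any} sequence of integer classes $v_n \in H^1(S\setminus\Sigma;\Z)$ with $\|A_n w - v_n\| \to 0$, and if moreover $\|B_n\|\cdot\|A_n w - v_n\| \to 0$ where $B_n := A_{n+1} A_n^{-1}$ is the cocycle step, then $B_n v_n - v_{n+1} = B_n(v_n - A_n w) + (A_{n+1} w - v_{n+1}) \to 0$. Since both sides are integer cohomology classes they coincide for all large $n$, so $A_n^{-1} v_n$ stabilises to the desired $v$. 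The proof thus reduces to a quantitative refinement of Veech's criterion and to a control of $\|B_n\|$ along excursions to the cusps.

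For the refinement, I would strengthen Theorem \ref{thm:Veech_criterion} so that it applies to the given compact transversal $Q$, rather than only to transversals small enough to be represented in spaces of interval exchange transformations. The idea is to analyse the eigenfunction $f$ at scales corresponding to renormalisations $g_t\omega \in Q$, combining the tower-based coboundary approach with the Hodge-norm contraction of Remark \ref{bound} (the cocycle has norm strictly less than $e^t$ on the symplectic orthogonal to the tautological plane). Forni's spectral gap (Theorem \ref{thm:Forni_inequality}) then converts this into an exponential estimate $\|A_n w - v_n\| \leq C e^{-\alpha r_n}$, where $r_n$ is the $n$-th return time to $Q$ and $\alpha>0$ depends on the gap between the top and second Lyapunov exponents.

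For the cusp estimate I would exploit the very simple structure of the cusps of $\CC = \SL(2,\R)/\Gamma$: each cusp corresponds to a parabolic direction of $(S,\omega)$, and by Corollary \ref{cor:only_one_family_of_small_geodesics} throughout a cusp excursion all short saddle connections are parallel. Consequently, up to a uniformly bounded factor, $B_n$ is a high power of the Dehn multi-twist generating the corresponding cusp stabiliser; on cohomology such a power is unipotent, so $\|B_n\|$ grows only polynomially in the cusp depth $r_{n+1}-r_n$. Balancing polynomial cusp growth against the exponential decay from the refined criterion yields $B_n(v_n - A_n w) \to 0$, and hence the ``no integer jump across the cusp'' conclusion. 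The main obstacle is matching these two halves quantitatively: the classical Veech criterion is purely qualitative, so turning it into an exponential estimate compatible with the cusp geometry of an arbitrary Veech surface — in a form which genuinely prevents the approximant $v_n$ from being shuffled to a different $A$-orbit during a cusp excursion — is the real content of this section.
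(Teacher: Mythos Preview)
Your reduction to $\|B_n\|\cdot\|A_n w - v_n\|\to 0$ is sound, but both proposed estimates fail. First, Forni's spectral gap says nothing about the \emph{rate} at which $A_n w$ approaches the integer lattice: the inequality $\lambda_2<1$ governs growth of the cocycle on invariant subspaces, while the error $A_n w - v_n$ lies in no such subspace a priori, and since the eigenfunction is only measurable the Lusin-type argument underlying Veech's criterion yields no exponential rate. (Indeed, exponential decay of $A_n(w-v_n)$ would already place $w-v$ in the stable space, which is the conclusion you want.) Second, and more concretely, a cusp excursion of geodesic duration $\tau=r_{n+1}-r_n$ corresponds to roughly $m\approx e^{\tau}$ applications of the Dehn multi-twist (this is exactly the $m_i=\lfloor e^t/\mu(C_i)\rfloor$ of Lemma~\ref{lem:cusp_excursion}); unipotency makes $\|B_n\|$ linear in $m$, hence \emph{exponential} in $\tau$, not polynomial. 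So even granting the bound $\|A_n w - v_n\|\le C e^{-\alpha r_n}$, a surface whose next excursion has $\tau_n\gg r_n$ makes the product diverge.

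The paper sidesteps any quantitative balancing. The dual Veech criterion (Theorem~\ref{thm:dual_Veech_criterion}) is formulated for $(k,\delta)$-tunneling curves with \emph{no height bound}, and the geometric content of Lemma~\ref{lem:cusp_excursion} is that every intermediate partial twist $\zeta-\varepsilon\sum_i \ell_i\langle\zeta,\gamma_i\rangle\gamma_i$, $0\le\ell_i\le m_i$, remains $(k',\delta)$-tunneling for the surface at the \emph{start} of the excursion. The purely qualitative criterion therefore applies to each of the $\sim e^{\tau}$ individual twist steps, and one tracks the nearest-integer functional $n_t$ one twist at a time across the excursion, showing it is constant without ever estimating a rate.
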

Note that the existence of such transversals is due to the fact that geodesics
in finite volume hyperbolic surfaces always come back to a fixed compact set.

In order to prove Theorem \ref{thm:Veech_criterion_for_Veech_surfaces},
we will need to control all the renormalizations of
eigenfunctions, and not only those corresponding to returns to a small
compact transversal.  It will be crucial for our strategy that
for a Veech surface we can choose $\epsilon$ such that the following holds.
Let $\CC_\epsilon=\{\omega:\, \sys(S,\Sigma,\omega) \geq \epsilon\}$. Then the set of ``moments of
compactness'' $\{t>0;\, g_t \cdot \omega \in \CC_\epsilon\}$ for the forward
Teichm\"uller geodesic is unbounded if and only if there are no vertical
saddle connections in $\omega$. And, most importantly, any orbit segment away from the
moments of compactness (the cusp excursions)
can be easily described geometrically.

\subsection{Tunneling curves and a dual Veech criterion}
In this section (which is not restricted to Veech surfaces)
we show that the existence of eigenfunctions yields
information about all times of the
Teichm\"uller flow and not only return times to a (small or large)
compact transversal.
This is based on a refinement of the Veech criterion (Theorem~\ref{thm:dual_Veech_criterion})
which is formulated in terms of homology cycles called
\emph{tunneling curves} (which are designed to follow closely the vertical
flow in a suitable sense). We show that tunneling curves always see
the expected property of approximation to integers.
In a second step
(Lemma~\ref{lem:nice_basis_in_compact_part}), we prove that in any compact part
of the moduli space, the tunneling curves generate $H_1(S \backslash \Sigma;
\Z)$.  Those two results together allow us to
remove the smallness condition on the  
transversal in the formulation of the Veech criterion, hence allowing us
to consider the large compact set $\CC_\epsilon$ when analysing Veech
surfaces later.

Before defining tunneling curves we need the notion of cycle of rectangles.
A \emph{rectangle} for $(S,\Sigma,\omega)$ is an isometric immersion of an
euclidean rectangle with horizontal and vertical sides.
In other words, a rectangle is a map
$R: [0,w] \times [0,h] \rightarrow S \setminus \Sigma$ such that
$R^* (\Re(\omega)) = \pm dx$ and $R^* (\Im(\omega)) = \pm dy$.
The number $w$ is called the \emph{width}
of the rectangle and the number $h$ its \emph{height}.
Note that with our convention we may not identify a
rectangle with its image in $S$, we care about the direction:
a rectangle is determined by its image in $S$ and an
element of $\{+1,-1\} \times \{+1, -1\}$.
\begin{definition}
  A $(k,\delta,h)$-\emph{cycle of rectangles} for $\omega$ is a set
of $2k$ rectangles denoted $H_j$ and $V_j$ for $j \in \Z/k \Z$ such that
\begin{itemize}
\item the height of $H_j$ is $\delta$ and its width is $w_j \geq \delta$;
\item the width of $V_j$ is $\delta$
and its height is $\delta \leq h_j \leq h$;
  \item $H_j^*(\Re(\omega)) = \pm dx$ and $H_j^*(\Im(\omega)) = dy$;
  \item $V_j^*(\Re(\omega)) = dx$ and $V_j^*(\Im(\omega)) = \pm dy$;
  \item each rectangle $H_j$ is embedded in the surface;
  \item for each $j$; 
	$H_j ([0,\delta] \times [0,\delta]) = V_{j-1}([0,\delta] \times [h_j-\delta,h_j])$
	and
	$H_j ([w_j-\delta,w_j] \times [0,\delta]) = V_j([0,\delta] \times [0,\delta])$.
\end{itemize}
\end{definition}
In other words, a $(k,\delta,h)$-cycle of rectangles is a thin tube
of width $\delta$ in $(S,\Sigma,\omega)$ made of $k$ horizontal and
$k$ vertical pieces and that forms a cycle in the surface. We will
sometimes drop the condition on the heights and write
$(k,\delta)$ for $(k,\delta,\infty)$.

A tunneling curve is a curve which belongs in a cycle of rectangles.
More precisely, let $R = (H_j,V_j)_{j \in \Z/ k \Z}$ be a
$(k,\delta,h)$-cycle of rectangles for $\omega$.
We may build a curve $\zeta$ as follows:
for $j \in \Z/k \Z$, we define vertical segments
$\zeta^v_j: [0,h_j-\delta] \rightarrow S \backslash \Sigma$ by
$\zeta_v^j(t) = V_j(\frac {\delta} {2}, t+\frac {\delta} {2})$
and horizontal segments
$\zeta^h_j: [0,w_j-\delta] \rightarrow S \backslash \Sigma$ by
$\zeta^h_j(t) = H_j( t+\frac {\delta} {2},\frac {\delta} {2})$.
The curve $\zeta$ is the concatenation of $\zeta^h_1$,
$\zeta^v_1$,..., $\zeta^h_k$, $\zeta^v_k$
and forms a loop in the surface $S \backslash \Sigma$.  The homology
class of $\zeta$ in $H_1(S \backslash \Sigma; \Z)$
is the \emph{homology class} of the cycle of rectangles $R$.

A homology class $\zeta \in H_1(S \backslash \Sigma; \Z)$ is said
to be $(k,\delta,h)$-\emph{tunneling} if there exists
a set of $(k_i,\delta,h)$-cycles of rectangles for
$i=1,\ldots,n$ such that $k_1 + \ldots + k_n \leq k$ and whose
homology classes $\zeta_i$ satisfy $\sum \zeta_i = \zeta$.

Note that if $\zeta$ is a
tunneling curve in a $(k,\delta,h)$-cycle of rectangles, then 
\[
|\Re(\omega)(\zeta)| \leq \int_\zeta |\Re(\omega)| \leq \frac{k}{\delta} \Area(\omega)
\quad \text{and} \quad
|\Im(\omega)(\zeta)| \leq \int_\zeta |\Im(\omega)| \leq k h.
\]
In particular, in a fixed translation surface $(S,\Sigma,\omega)$ there is
only a finite number of $(k,\delta,h)$-tunneling curves. The set of
$(k,\delta,h)$-tunneling
homology classes in $H_1(S \backslash \Sigma; \Z)$ for $\omega$ is denoted
$TC_{k,\delta,h}(\omega)$. The set $TC_{k,\delta}(\omega) =
TC_{k,\delta,\infty}(\omega)$ denotes the set of $(k,\delta)$-tunneling
homology classes. Note that, if $k' \leq k$, $\delta' \geq \delta$ and $h' \leq h$
then a $(k',\delta',h')$-tunneling curve is also $(k,\delta,h)$-tunneling.

We will now adapt Veech's original proof of his criterion in
Veech~\cite{Veech1984} to obtain a dual version with respect to
the tunneling curves in $TC_{k,\delta}$.

\begin{theorem}[dual Veech criterion] \label{thm:dual_Veech_criterion}
Let $(S,\Sigma,\omega)$ be a translation surface without vertical
saddle connections and that admits
an eigenvalue $\nu$. Then, for any
positive integer $k$ and positive real number $\delta$ we have
\[
\lim_{t \to \infty} \sup_{\zeta \in TC_{k,\delta}(g_t \cdot \omega)}
\dist(\nu \Im(\omega)(\zeta),\Z) = 0.
\]
\end{theorem}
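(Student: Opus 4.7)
The plan is to follow Veech's original argument for his criterion closely, but to perform it directly on the geometric cycles of rectangles in $g_t\cdot\omega$ instead of on a Rauzy--Veech coding. Let $f\colon S\to\C$ denote a non-trivial $\nu$-eigenfunction of the vertical flow. The absence of vertical saddle connections, combined with Keane's theorem, forces the vertical flow to be minimal; hence $|f|$ is essentially constant and we may assume $|f|\equiv 1$.

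Fix $\epsilon>0$. Apply Lusin's theorem to extract a compact set $E\subset S$ on which $f$ is uniformly continuous, with $\Area(S\setminus E)<\eta:=\delta^2/(4k)$; uniform continuity then supplies a scale $\delta'>0$ such that $x,y\in E$ at $\omega$-distance less than $\delta'$ satisfy $|f(x)-f(y)|<\epsilon/(4k)$. Choose $t$ large enough that $e^{-t}/\delta<\delta'$. Since the horizontal rectangles of a $(k,\delta)$-cycle in $g_t\cdot\omega$ are embedded and the total $g_t\cdot\omega$-area is $1$, the widths satisfy $w_j\leq 1/\delta$, so every horizontal subsegment of a $(k,\delta)$-tunneling cycle in $g_t\cdot\omega$ has $\omega$-horizontal length below $e^{-t}/\delta<\delta'$.

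Any $(k,\delta)$-tunneling class is a sum of at most $k$ basic cycles, so it suffices to prove the estimate for a single basic $(k',\delta)$-cycle $R=(H_j,V_j)_{j\in\Z/k'\Z}$ with $k'\leq k$. Consider the two-parameter family of tunneling loops
\[
\zeta^{h,(a,b)}_j(t)=H_j(t+b,a),\qquad \zeta^{v,(a,b)}_j(t)=V_j(b,t+a),\qquad (a,b)\in(0,\delta)^2.
\]
The corner identifications $H_j(w_j-\delta+s,t)=V_j(s,t)$ and $H_{j+1}(s,t)=V_j(s,h_j-\delta+t)$ imply that a single pair $(a,b)$ simultaneously closes the loop at every junction, so the family is genuinely two-dimensional and every $\zeta^{(a,b)}$ is freely homotopic in $S\setminus\Sigma$ to the standard tunneling curve of $R$. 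The $2k'$ transition points depend on $(a,b)$ through $H_j$- or $V_j$-charts which pull back the $\omega$-area form to $\pm da\wedge db$; by Fubini, the Lebesgue measure of shifts for which some transition point lies outside $E$ is at most $2k'\eta$, so the good shifts form a set of measure at least $\delta^2-2k'\eta>0$.

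Fix a good shift. The rectangles miss $\Sigma$, so the eigenvalue equation applies exactly on each vertical subsegment, yielding the multiplicative phase $e^{2\pi i\nu\Im(\omega)(\zeta^{v,(a,b)}_j)}$, while the endpoints of each horizontal subsegment lie in $E$ at $\omega$-distance less than $\delta'$, giving $|f(\text{end})-f(\text{start})|<\epsilon/(4k)$. Chaining these relations around the closed loop and using $|f|\equiv 1$ yields $|1-e^{2\pi i\nu\Im(\omega)(\zeta)}|\leq k'\epsilon/(4k)\leq\epsilon/4$, hence $\dist(\nu\Im(\omega)(\zeta),\Z)=O(\epsilon)$; summing this over the basic cycles of an arbitrary $\zeta\in TC_{k,\delta}(g_t\cdot\omega)$ yields the same bound. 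The main technical point is in the previous paragraph: one has to verify that the corner gluings of a basic cycle really do propagate a single global pair $(a,b)$ of shift parameters, so that Fubini has genuine two-dimensional room to operate. Everything else is a standard Lusin-based closing-up-around-a-loop argument, as in Veech's original proof.
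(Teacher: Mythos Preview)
Your proof is correct and follows essentially the same strategy as the paper's: Lusin's theorem produces a large set $E$ on which $f$ is uniformly continuous, the eigenfunction equation controls the vertical pieces exactly, and a Fubini--type measure argument locates transition points in $E$ so that the phases close up around the loop.

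The only genuine organizational difference is in the Fubini step. The paper works at the fixed middle height $\delta/2$: for each $H_j$ it first uses Fubini in the $y$--direction of the (embedded, area $\geq\delta^2$) rectangle to find \emph{some} horizontal line $I'_j$ whose intersection with the Lusin set has probability close to $1$, then transfers this back to the middle line $I_j$ via the flow. This produces a large set $K_j\subset I_j$ on which $f$ is nearly constant, and the $k$ corner parameters $x_j$ are then chosen independently. Your version instead moves the entire tunneling curve in a global two--parameter family $(a,b)$, so that the Fubini argument is applied directly to the $2k'$ corner squares. This is slightly cleaner (no transfer step), but it hinges on the observation that a \emph{single} pair $(a,b)$ determines a closed loop. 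That observation is correct precisely because of the normalizations $H_j^*(\Im\omega)=dy$ and $V_j^*(\Re\omega)=dx$ built into the definition of a cycle of rectangles; your explicit corner formulas $H_j(w_j-\delta+s,t)=V_j(s,t)$, etc., hold only when all the $\pm$ signs are $+$, but in general the identification is $(s,t)\mapsto(\pm s+c,\pm t+c')$, which still has Jacobian $\pm1$ and still intertwines the $a$-- and $b$--shifts as you need. So the ``main technical point'' you flag is genuine but easily handled.
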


\begin{proof}
We assume that $\Area(\omega) = 1$.

Fix $k$ and $\delta$. We fix a small number $\alpha$ and we prove that
for $t$ big enough, all $(k,\delta)$-tunneling curves for
$\omega_t = g_t \cdot \omega = e^t \Re(\omega) + I e^{-t} \Im(\omega)$
are such that $\dist (\nu \Im(\omega)(\zeta), \Z) < k \alpha$. It is
enough to prove the theorem for a curve that belongs to a
cycle of rectangles (recall that a tunneling curve may be a sum of
curves associated to cycle of rectangles).

Let $(H_j,V_j)_{j \in \Z/ k \Z}$ be a $(k,\delta)$-cycle of rectangles for
$\omega_t$ and let $\zeta$ be its homology class. We define
the \emph{signed height} of the vertical rectangle $V_j$ by
$\widetilde{h_j} = h_j - \delta$ if
$(V_j)^*(\Im(\omega)) = dy$ and
$\widetilde{h_j} = \delta -h_j$ otherwise (it is precisely the value of
the integral of $\Im(\omega)$ along the component $\zeta^v_j$ of
the curve $\zeta$ as above).
In particular,
we have $\Im(\omega)(\zeta)= \widetilde{h_1} + \ldots + \widetilde{h_k}$.
For each $j$, we define $I_j = H_j([0,w_j] \times \{\delta/2\})$ the middle
interval of the rectangle $H_j$. The segment $I_j$ is an horizontal
interval of length $e^{-t} w_j$ for $\omega$.

We assume that the surface $S$ admits a non-trivial eigenvalue
$\nu \not= 0$ and denote $f$ an associated eigenfunction $f:S \to \C$ with
$|f|=1$.  
The set $\Omega \subset S$ of points $x$ such that for all $T \in \R$
the vertical flow is defined and $f(\phi_T(x)) = e^{2 \pi i \nu T} f(x)$ has
full measure.
Moreover, $\Omega$ intersects any
horizontal segment in a subset of full linear measure.
Define measurable
functions $f_j:[0,w_j] \rightarrow \C$ by $f_j(t)=f(H_j(t,\delta/2))$.
For each $j$ and almost every $x \in [0,\delta]$ we have
\[
\begin{array}{ll}
  f_j(w_j-\delta+x) = e^{-2 \pi i \nu \widetilde{h_j}} f_{j+1}(x)
& \text{if $H_j^*(\Re(\omega)) = dx$ and $H_{j+1}^*(\Re(\omega)) = dx$}\\
  f_j(w_j-x) = e^{-2 \pi i \nu \widetilde{h_j}} f_{j+1}(x)
& \text{if $H_j^*(\Re(\omega)) = -dx$ and $H_{j+1}^*(\Re(\omega)) = dx$}\\
  f_j(w-\delta+x) = e^{-2 \pi i \nu \widetilde{h_j}} f_{j+1}(\delta-x)
& \text{if $H_j^*(\Re(\omega)) = dx$ and $H_{j+1}^*(\Re(\omega)) = -dx$} \\
  f_j(w_j-x) = e^{-2 \pi i \nu \widetilde{h_j}} f_{j+1}(\delta-x)
& \text{if $H_j^*(\Re(\omega)) = -dx$ and $H_{j+1}^*(\Re(\omega)) = -dx$}
\end{array}
\]
These formulas can be rewritten in more compact form as follows. Let $s_j:[0,\delta] \to
[0,\delta]$ be given by $s_j(x)=\delta-x$ if $H_j^*(\Re \omega)) = dx$ and
by $s_j(x)=x$ if $H_j^*(\Re \omega)=-dx$.
Then $f_j(w_j-s_j(x))=e^{-2 \pi i \nu
\widetilde {h_j}} f_{j+1}(\delta-s_{j+1}(x))$ in all cases.

Recall that we fix $\alpha$ an arbitrarily small positive real number.
The strategy of the proof, consists in proving that if $t$ is big enough,
independently of the choice of the $(k,\delta)$-cycle of rectangles,
we may find points $x_j \in [0,\delta]$, $j \in \Z/ k \Z$ such that
for each $j$ we have
$|f_j(\delta-s_j(x_{j-1}))-f_j(w_j-s_j(x_j))|<\alpha$.  In particular,
we can write $\frac {f_j(\delta-s_j(x_{j-1}))}
{f_j(w_j-s_j(x_j))}=e^{2 \pi i \lambda_j}$ where $\lambda_j \in
(-\alpha,\alpha)$.

Assuming that such points do exist, we prove how to derive our
theorem.  Using the points $x_j$ we may write
\[
\begin{array}{ll}
1&=\prod_{j \in \Z/ k \Z}
\frac {f_j(w_j-s_j(x_j))} {f_{j-1}(w_{j-1}-s_{j-1}(x_{j-1}))}=
\prod_{j \in \Z/ k \Z}
e^{2 \pi i \nu \widetilde {h_{j-1}}}
\frac {f_j(w_j-s_j(x_j))} {f_j(\delta-s_j(x_{j-1}))}\\
&= \prod_{j \in \Z/ k \Z} e^{2 \pi i \nu \widetilde {h_{j-1}}} e^{-2 \pi i
\lambda_j}
\end{array}
\]
so that $\Im(\omega)(\zeta)=\sum \lambda_j \mod \Z$, implying the result.

Now, we show how to find points $x_j$ using a measure theoretic
argument.  More precisely, we prove that the measure of
the set of points $(x,y) \in [0,\delta] \times [w_j-\delta, w_j]$
such that $|f_j(x) - f_j(y)| < \alpha$ becomes arbitrarily close to
$\delta^2$ as $t$ goes to infinity independently of the choice of the cycle
of rectangles.  Since $\delta \leq w_j \leq \delta^{-1}$,
it is enough to show that there exists
a compact subset $K_j \subset I_j$ with measure close to $1$ such that
$|f(x)-f(y)|<\alpha$ for every $x,y \in K_j$.

Fix some small constant $\chi>0$.  By Lusin's Theorem,
there exists a compact
subset $K \subset S$ of measure $1-\chi$ such that $f|K$ is continuous (up to a measure zero set). 
In particular, there exists $\epsilon>0$ such that if
$x,y \in K$ are $\epsilon$-close then $|f(x)-f(y)|<\alpha$.

Notice that the rectangle $H_j$ has width $e^{-t} w_j$ and height $e^t
\delta$ in $(S,\Sigma,\omega)$. Recall that we also have $\delta \leq w_j \leq \frac {1}
{\delta}$. In particular the area of $H_j$ is at least $\delta^2$, so
$K$ must intersect it in a subset of measure at least $1-\delta^{-2}
\kappa$.  It follows that some
(full) horizontal segment $I'_j=H_j([0,w_j] \times \{T\})$
in this rectangle intersects $K$ into a subset $K'_j$ of
measure at least $1-\delta^{-2} \chi$ as well.
Take $t$ large enough so that $e^{-t} \delta^{-1}<\epsilon$.  Then
$|f(x)-f(y)|<\alpha$ for every $x,y \in K'_j$.  Note that
$I'_j=\phi_{\pm e^t (T-\delta/2)}(I_j)$ (the same sign as when writing
$H_j^*(\Im \omega)=\pm dy$) so by the functional
equation we have $|f(x)-f(y)|<\alpha$
for every $x,y \in K_j=\phi_{\mp e^t (T-\delta/2)}(K'_j)$, as desired.
\end{proof}

We now prove that any
translation surface
admits tunneling basis
and the constant may be taken uniform in compact sets.

\begin{lemma} \label{lem:nice_basis_in_compact_part}
  Let $\MM^{(1)}_{S,\Sigma}(\kappa)$ be a stratum in moduli space
and let $\epsilon>0$.  Let $K_\epsilon \subset \MM_{S,\Sigma}(\kappa)$
be the set of translation surfaces
whose systole is at least $\epsilon$. Then there exists
$(k,\delta,h)$ such that for any translation surface
$\omega \in K_\epsilon$, the $(k,\delta,h)$-tunneling curves
for $\omega$ generate $H_1(S \backslash \Sigma; \Z)$.
\end{lemma}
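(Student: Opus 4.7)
The plan is to establish the lemma first for a single translation surface $\omega \in K_\epsilon$, then upgrade to uniform constants by a compactness argument. For fixed $\omega$, I would take a finite collection of smooth based loops that generate $H_1(S \backslash \Sigma; \Z)$, all passing through a non-singular basepoint and kept at distance at least $\epsilon/3$ from $\Sigma$, and replace each loop by a horizontal--vertical ``staircase'' of the same homology class. Since $\sys(\omega) \geq \epsilon$, the flat exponential map embeds any disk of radius $< \epsilon/3$ centered at a point at distance $\geq \epsilon/3$ from $\Sigma$; so sufficiently short horizontal or vertical segments in that region extend to embedded rectangles of width $\delta$ for any $\delta$ small compared to $\epsilon$.

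To build the staircase, subdivide each generating loop into arcs of diameter less than a small constant $\eta$ depending on $\epsilon$, and replace each arc by a horizontal segment followed by a vertical one connecting its endpoints (read off in local flat coordinates). Choosing $\delta$ small enough, consecutive horizontal and vertical segments meet in an embedded $\delta \times \delta$ square, giving the corner structure of a $(k,\delta,h)$-cycle of rectangles, with $k$ bounded in terms of the diameter of $S$ (which is uniformly bounded on $K_\epsilon$) and $h$ bounded by $\eta$. Applied to each generator, this produces tunneling classes spanning $H_1(S \backslash \Sigma; \Z)$; if a generator cannot be realized by a single cycle of rectangles, it can be written as a sum of such cycles, which is allowed by the definition of a tunneling class.

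For uniformity over $K_\epsilon$, I would exploit the robustness of the construction under small perturbations of $\omega$ in period coordinates: if $(k,\delta,h)$-tunneling curves generate $H_1$ at $\omega_0$, then by continuity of the period map the corresponding rectangles persist, with slightly worse parameters such as $(k, \delta/2, 2h)$, throughout a neighborhood $U$ of $\omega_0$. Since $K_\epsilon$ is compact by Mumford's criterion, it is covered by finitely many such neighborhoods, and taking the largest $k$ and $h$ and the smallest $\delta$ yields the required uniform parameters.

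The main obstacle is verifying that the thin rectangles around the staircase segments are genuinely embedded rather than merely immersed: two far-apart segments along the loop could in principle be close in the surface even when far apart as parameter values. The systole bound controls self-intersections at scales below $\epsilon$ away from $\Sigma$, but some care is needed when a loop is long or passes near itself; this is handled by taking $\delta$ much smaller than $\epsilon$ and, when necessary, splitting a loop into several cycles of rectangles whose homology classes sum to the desired generator, rather than insisting on a single cycle.
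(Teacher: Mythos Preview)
Your proposal is correct and follows essentially the same approach as the paper: establish a tunneling basis for each individual surface by homotoping generating loops to alternating horizontal--vertical staircases and thickening them to cycles of rectangles, then pass to uniform constants over $K_\epsilon$ via openness of the condition and compactness. The paper phrases the compactness step slightly more cleanly by directly declaring the set $A_{k,\delta,h}$ of surfaces admitting a $(k,\delta',h')$-tunneling basis with $\delta'>\delta$, $h'<h$ to be open and covering $K_\epsilon$ by such sets, whereas you argue robustness in period coordinates; but the content is the same, and your embeddedness worries (only the horizontal rectangles $H_j$ are required to be embedded) are handled, as in the paper, simply by taking the segments short enough.
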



\begin{proof}
  The set $A_{k,\delta,h}$ of surfaces in $\MM_{S,\Sigma}(\kappa)$
that admits a $(k,\delta',h')$-tunneling basis with
$\delta' > \delta$ and $h' < h$ is an open set. From compactness of
$K_\epsilon$ it is hence enough to prove that for any translation surface
in $\MM_{S,\Sigma}(\kappa)$, every closed curve in $S \setminus \Sigma$ is
homotopic to a $(k,\delta,h)$-tunneling curve,
for some $k$, $\delta$ and $h$.  Indeed, up to homotopy we may assume that
a closed curve is built by concatenating small (and hence embedded)
horizontal and vertical segments in alternation.  Those segments can then
be slightly thickened to build the desired cycle of rectangles.
\end{proof}

\subsection{Excursions in cusps}
In this section we prove
Theorem~\ref{thm:Veech_criterion_for_Veech_surfaces}.

We first give another formulation of
Theorem~\ref{thm:Veech_criterion_for_Veech_surfaces} in terms of
tunneling basis (in order
to use Theorem~\ref{thm:dual_Veech_criterion}). Let $\CC \subset
\MM_{S,\Sigma}(\kappa)$ be the $\SL(2,\R)$ orbit of a Veech surface,
let $\epsilon > 0$ be
small, and let $\CC_\epsilon$ be the set of surfaces in $\CC$
whose systole is at least $\epsilon$.
From Lemma~\ref{lem:nice_basis_in_compact_part}, we get $k$, $\delta$
and $h$ such that each translation structure
$\omega \in \CC_\epsilon$ has a $(k,\delta,h)$-tunneling basis in
$H_1(S \backslash \Sigma; \Z)$.
Using compactness and the finiteness of $(k,\delta,h)$-tunneling curves,
there exists a constant $M > 1$ such that for any translation
structure $\omega$ in $\CC_\epsilon$ any $(k,\delta,h)$-tunneling
basis $\{\zeta_j\}$
and any $(k,\delta,h)$-tunneling curve $\zeta$ for $\omega$, the
coefficients of $\zeta=\sum c_j \zeta_j$ with respect to the
basis satisfy $\sum |c_j|<M$.

Let $\omega \in \CC$ be a translation surface that admits
an eigenvalue $\nu$.
From Theorem~\ref{thm:dual_Veech_criterion}, there exists
$t_0$ such that for any $t$ larger than $t_0$, to each
$(k,\delta,h)$-tunneling curve $\zeta$ for $\omega_t$, we may associate
a unique $n_t(\zeta) \in \Z$ such that
$|\nu \Im(\omega)(\zeta) - n_t(\zeta)| < 1/(2M)$.
Let $t \geq t_0$ be such that $\omega_t \in \CC_\epsilon$.
If $\{\zeta_j\}$ is a $(k,\delta,h)$-tunneling basis and $\zeta=\sum c_j
\zeta_j$ is a $(k,\delta,h)$-tunneling curve, then
\[
|\nu \Im(\omega)(\zeta)-\sum c_j n_t(\zeta_j)|=|\sum c_j
(\nu \Im(\omega)(\zeta_j)-n_t(\zeta_j))|<\frac {1} {2M} \sum |c_j|<\frac {1}
{2},
\]
so that $n_t(\zeta)=\sum c_j n_t(\zeta_j)$.
As the $(k,\delta,h)$-tunneling curves form a basis in $\CC_\epsilon$,
the mapping $n_t: TC_{k,\delta,h}(\omega_t) \rightarrow \Z$ extends in a
unique way to a linear map $n_t:H_1(S \backslash \Sigma; \Z) \rightarrow \Z$.
The convergence to an integer element in
Theorem~\ref{thm:Veech_criterion_for_Veech_surfaces} is then equivalent
to the following statement.

\begin{lemma} \label{lem:Veech_criterion_for_Veech_surface}
  Let $\omega$ be a Veech surface in $\CC$ without vertical saddle
connections for which the translation
flow admits an eigenvalue $\nu$ and let $\omega_t=g_t \cdot \omega$.
Let $\epsilon>0$
and let $t_0$ and $n_t \in H^1(S \backslash \Sigma; \Z)$ be as above.
Then the family $(n_t)_{t \geq t_0, \omega_t \in \CC_\epsilon}$ is
eventually constant.
\end{lemma}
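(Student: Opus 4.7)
The plan is to prove this in two stages: first a local-constancy statement on the open set $U := \{t \geq t_0 : \omega_t \in \CC_\epsilon\}$, and then invariance of $n_t$ across individual cusp excursions. For the first stage I would exploit that $M > 1$ forces $|\nu\Im_\omega(\zeta) - n_t(\zeta)| < 1/(2M) < 1/2$ for every $\zeta \in TC_{k,\delta,h}(\omega_t)$, so $n_t(\zeta)$ is simply the nearest integer to $\nu\Im_\omega(\zeta)$; consequently $n_{t_1}$ and $n_{t_2}$ automatically agree on any cycle tunneling for both $\omega_{t_1}$ and $\omega_{t_2}$. Since being $(k,\delta,h)$-tunneling is an open condition on the flat structure, compactness of $\CC_\epsilon$ together with Lemma~\ref{lem:nice_basis_in_compact_part} allows one to cover $\CC_\epsilon$ by finitely many open sets on each of which a single tunneling basis works for every surface in the set. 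Writing $U$ as a countable disjoint union of intervals $J_i = [a_i, b_i]$ ordered in time, it follows that $n_t$ takes a constant value $n^{(i)}$ on each $J_i$.

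The problem thus reduces to proving $n^{(i)} = n^{(i+1)}$ for all $i$ large enough, i.e., to showing invariance across a single cusp excursion $(b_i, a_{i+1})$. For this I would invoke Corollary~\ref{cor:only_one_family_of_small_geodesics}: all saddle connections of $\omega_t$ of length less than $1$ during the excursion are parallel to a unique direction, which in the Veech setting is parabolic. The resulting cylinder decomposition persists throughout the excursion and extends continuously to open neighborhoods of the endpoints, so in particular both $\omega_{b_i}$ and $\omega_{a_{i+1}}$ admit a cylinder decomposition in the same underlying direction. Let $\gamma_1,\ldots,\gamma_r$ denote the core curves of these cylinders and $\eta_1,\ldots,\eta_s$ transverse cycles completing them to a generating family of $H_1(S\backslash\Sigma;\Z)$.

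The key step is to construct a finite-index sublattice $L \subset H_1(S\backslash\Sigma;\Z)$ all of whose elements are $(k,\delta,h)$-tunneling simultaneously for $\omega_{b_i}$ and for $\omega_{a_{i+1}}$. Since both endpoints lie in $\CC_\epsilon$, the cylinders have circumferences and heights uniformly bounded above and below, so each core curve $\gamma_j$ is represented by a horizontal rectangle looping around its cylinder at each endpoint and is tunneling at both. The return from $\omega_{b_i}$ to $\omega_{a_{i+1}}$ through the cusp is, up to uniformly bounded corrections, the action of a power of the cusp's parabolic generator, which acts on $H_1$ by a product of Dehn twists in the $\gamma_j$; adjusting each $\eta_k$ by an appropriate integer combination of core curves then yields a class tunneling at both endpoints. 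Granted such $L$, the first stage gives $n^{(i)} = n^{(i+1)}$ on $L$, and the difference $n^{(i+1)} - n^{(i)} \in H^1(S\backslash\Sigma;\Z)$ factors through the finite group $H_1(S\backslash\Sigma;\Z)/L$, hence vanishes.

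The main obstacle is the explicit construction of cycles of rectangles giving rise to $L$. The essential geometric input is the rigidity of Veech cusps: the combinatorics of the cylinder decomposition stays constant throughout an excursion, and the return to $\CC_\epsilon$ is conjugate by a parabolic affine diffeomorphism to the entry. Translating this rigidity into rectangles with widths and heights respecting the tunneling thresholds $(\delta,h)$ at both $b_i$ and $a_{i+1}$ requires careful bookkeeping; the uniform systolic lower bound on $\CC_\epsilon$ together with Theorem~\ref{thm:no_small_triangle}, which prevents small transverse saddle connections and hence constrains the cylinder combinatorics, should make all parameters remain within the permitted range.
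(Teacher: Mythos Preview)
Your first stage (local constancy of $n_t$ on the set where $\omega_t\in\CC_\epsilon$) is correct and matches the paper. The gap is in the second stage.

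You propose to produce a finite-index sublattice $L\subset H_1(S\setminus\Sigma;\Z)$ generated by classes that are $(k,\delta,h)$-tunneling \emph{simultaneously} for $\omega_{b_i}$ and $\omega_{a_{i+1}}$. This cannot be done. The core curves $\gamma_j$ are indeed tunneling at both endpoints, but together with classes having zero intersection with every $\gamma_j$ they span only an \emph{infinite}-index sublattice (the $\gamma_j$ are isotropic for the intersection form). Any transverse class $\eta$ with $\langle\eta,\gamma_j\rangle\neq 0$ is the problem: the $(k,\delta,h)$-tunneling representatives at $b_i$ and at $a_{i+1}$ differ by roughly $m_j\sim e^\tau$ copies of $\gamma_j$, where $\tau=a_{i+1}-b_i$ is the excursion length. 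Since $\tau$ is unbounded over the sequence of excursions while $(k,\delta,h)$ are fixed in advance, no single class $\eta+\sum a_j\gamma_j$ can satisfy the height bound $h$ at both endpoints. And even if you try to compare $n^{(i)}$ and $n^{(i+1)}$ algebraically via $\eta'=\eta+\sum a_j\gamma_j$ and the known agreement on the $\gamma_j$, the coefficients $a_j$ are of order $e^\tau$ while $|\nu\Im_\omega(\gamma_j)-n^{(i)}(\gamma_j)|$ is only bounded by $1/(2M)$; the accumulated error $\sum|a_j|\cdot 1/(2M)$ is enormous and tells you nothing.

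The paper's proof avoids this by dropping the height bound on \emph{intermediate} curves and walking across the excursion one Dehn twist at a time. Lemma~\ref{lem:cusp_excursion} shows that for each basis element $\zeta_j^0$ that is $(k,\delta,h)$-tunneling at $b_i$, the class $\zeta_j^1=\zeta_j^0-\varepsilon\sum_i m_i\langle\zeta_j^0,\gamma_i\rangle\gamma_i$ is $(k',\delta,h)$-tunneling at $a_{i+1}$, and crucially every partial twist $\zeta_j^\ell$ (with $0\le\ell_i\le m_i$) is $(k',\delta)$-tunneling---with \emph{no} height bound---at time $b_i$. The dual Veech criterion (Theorem~\ref{thm:dual_Veech_criterion}) requires no height bound, so each $\nu\Im_\omega(\zeta_j^\ell)$ is within $1/(4M')$ of an integer. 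Stepping from $\zeta_j^\ell$ to $\zeta_j^{\ell'}$ by adding a single $\gamma_i$ then forces the nearest integers to be consistent at every step, yielding $n_{a_{i+1}}(\zeta_j^1)=n_{b_i}(\zeta_j^1)$. The idea you are missing is that the height bound is needed only to control basis coefficients when \emph{defining} $n_t$; once $n_t$ exists as a linear map, tracking it across a long excursion uses the larger class $TC_{k',\delta}(\omega_{b_i})$ of tunneling curves without height restriction.
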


The proof of
Lemma~\ref{lem:Veech_criterion_for_Veech_surface} follows by
analyzing parts of Teichm\"uller geodesics that go off
$\CC_\epsilon$ because, by construction, $n_t$ is locally constant.

Until the end of this section,
fix $\epsilon>0$ such that the cusps of
$\CC$ are isolated in the complement of $\CC_\epsilon$ and that
the conclusion of
Corollary~\ref{cor:only_one_family_of_small_geodesics} holds (the former is
actually a consequence of the latter).
A \emph {cusp excursion}
of length $t>0$ is a segment of a Teichm\"uller orbit
$\omega_s=g_s \cdot \omega_0$, $s \in [0,t]$, such that $\omega_s \in
\CC_\epsilon$ only for $s=0,t$.  Note that a shortest saddle connection $\gamma$ at
the beginning of a cusp excursion is never horizontal or vertical,
and indeed we have $|\Im(\omega_0)(\gamma)|>|\Re(\omega_0)(\gamma)|>0$,
with the length of the cusp
excursion given by $t=\log \frac {|\Im(\omega_0)(\gamma)|}
{|\Re(\omega_0)(\gamma)|}$.

By our choice of $\epsilon$ (see Corollary~\ref{cor:only_one_family_of_small_geodesics})
, if $\omega$ belong to $\partial \CC_\epsilon=\{(S,\Sigma,\omega) \in \CC:\,
\sys(S,\Sigma,\omega)=\epsilon\}$, then $S$ admits a canonical decomposition
as a finite union of maximal cylinders $C_i$, $1 \leq i \leq c$, with waist
curve $\gamma_i$ parallel to the saddle connection $\gamma$.

\begin{lemma} \label{lem:cusp_excursion}
For any $(k,\delta,h)$ with $\delta>0$ small enough and $h>0$ big enough,
there exists an integer $k' \geq k$ with the following property.
Let us consider a cusp excursion $(\omega_s)_{0 \leq s \leq t}$ of length $t$. Let $\gamma$ be a saddle connection of shortest length on $\omega_0$.
Let $\kappa$ be the sign of $\frac {\Im(\omega_0)(\gamma)}
{\Re(\omega_0)(\gamma)}$ and let
$\displaystyle m_i = \left\lfloor \frac{e^t}{\mu(C_i)} \right\rfloor$,
where $\mu(C_i)$ is the modulus of the cylinder $C_i$ in the canonical
decomposition of $(S,\Sigma,\omega_0)$.  For each
$(k,\delta,h)$-tunneling curve $\zeta$ for $\omega_0$ the class
$\displaystyle  \zeta - \kappa \sum_{i=1}^c m_i \langle \zeta, \gamma_i
\rangle \gamma_i$ is $(k',\delta,h)$-tunneling for $\omega_t$ and for any
integers $\ell_i$ such that $0 \leq \ell_i \leq m_i$ the classes
$\displaystyle \zeta -
\kappa \sum_{i=1}^c \ell_i \langle \zeta, \gamma_i \rangle
\gamma_i$ are $(k',\delta)$-tunneling for $\omega_0$.
\end{lemma}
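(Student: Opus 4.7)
The approach is to build explicit $(k',\delta)$- and $(k',\delta,h)$-cycles of rectangles, exploiting the persistent cylinder decomposition of $(S,\omega_0)$ along $\gamma$. By Corollary~\ref{cor:only_one_family_of_small_geodesics}, during the entire excursion $s\in[0,t]$ the short saddle connections of $\omega_s$ are all parallel to $g_s\gamma$, so the cylinder decomposition of $\omega_0$ into cylinders $C_i$ with waist curves $\gamma_i$ persists throughout and no competing decomposition appears. The key quantitative input is the geometric identity that a vertical line inside $C_i$ winds $\tan\theta/\mu(C_i)$ times around $\gamma_i$ per full vertical traverse, where $\theta$ is the angle of $\gamma$ from horizontal at $\omega_0$; since $|\tan\theta|=e^t$ (because $t=\log(|\Im_{\omega_0}(\gamma)|/|\Re_{\omega_0}(\gamma)|)$), this winding equals $\pm e^t/\mu(C_i)\approx\varepsilon\, m_i$. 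So a \emph{single} tall vertical rectangle inside $C_i$ can realize any number of windings from $0$ to $m_i$, with sign $\varepsilon$.

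For the second claim, fix $0\le\ell_i\le m_i$ and start from the given $(k,\delta,h)$-cycle for $\zeta$. At each signed crossing of the cycle with a waist curve $\gamma_i$, I insert a ``twist detour'' consisting of a single tall vertical rectangle inside $C_i$ (width $\delta$, height at most $h_i/\cos\theta$) that spirals $\ell_i$ times around $\gamma_i$ with sign $-\varepsilon$, together with one short horizontal rectangle absorbing the positional mismatch. Summing over the uniformly bounded total number of crossings $\sum_i|\langle\zeta,\gamma_i\rangle|$, the rectangle count stays bounded by some $k'=k'(k,\delta,h)$ independent of $t$ and of the choice of $\ell_i$. The absence of the height bound $h$ in the target is crucial here, since the inserted vertical rectangle has $\omega_0$-height growing like $h_i e^t/|\gamma|$.

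For the first claim, specialize to $\ell_i=m_i$ and apply $g_t$ to the construction above. Since $g_t$ preserves the horizontal and vertical directions, the images are again horizontal/vertical rectangles at $\omega_t$, merely rescaled by $e^{\pm t}$. The tall vertical detour rectangle of $\omega_0$-height $\sim h_i e^t/|\gamma|$ contracts to $\omega_t$-height $\sim h_i/|\gamma|\cdot|\gamma|=h_i$, which is bounded uniformly over the compact set $\partial\CC_\epsilon$; choosing $h$ in the hypothesis larger than $\max_i h_i$ yields the required $(k',\delta,h)$-cycle. The horizontal rectangles grow in width (preserving width $\ge\delta$) and shrink to height $e^{-t}\delta$; they are thickened back to height $\delta$ inside the cylinder, which is a homologically trivial modification since it occurs in a disc.

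The principal obstacle is obtaining the \emph{uniform} bound on $k'$, independent of $t$ and of the $\ell_i$. A naive realization of $\ell_i$ windings by $\ell_i$ iterates of $\gamma_i$ would give $k'$ growing linearly in $\ell_i$, hence unbounded as $t\to\infty$; the spiraling-rectangle trick resolves this only because $|\Re_{\omega_0}(\gamma)|>0$ (so vertical lines are truly transverse to $\gamma$), because no height bound is imposed on the $(k',\delta)$-tunneling conclusion, and because the flow time $t$ is precisely calibrated to the single-traverse winding in $C_i$. A secondary subtlety is the homological bookkeeping: one must verify that the signed winding contribution of the inserted rectangles sums exactly to $-\varepsilon\ell_i\langle\zeta,\gamma_i\rangle\gamma_i$, which amounts to tracking the sign $\varepsilon$ through the orientation of the shear induced by $g_s$ inside each $C_i$.
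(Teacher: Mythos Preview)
Your approach to the second claim (the $(k',\delta)$-tunneling for $\omega_0$) is essentially the same as the paper's: both use a single long vertical segment inside $C_i$ to realize $\ell$ windings of $\gamma_i$, exploiting that the maximal vertical path in the core of $C_i$ has length of order $h_i/\cos\theta$ while $\ell$ windings require length $\ell\, w_i\sin\theta\le m_i w_i\sin\theta\le h_i/\cos\theta$. The paper organizes this slightly differently: it first decomposes $\zeta$ into a bounded number of \emph{transversals} $\gamma'$, each crossing a single cylinder with $\langle\gamma',\gamma_i\rangle=\pm1$, and then shows directly that each $\ell\gamma_i$ is $(2,\delta)$-tunneling as a separate cycle (using the additive definition of tunneling classes), rather than inserting detours into the original cycle. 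This sidesteps your bookkeeping about geometric versus algebraic crossings, but the geometric content is the same.

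For the first claim, however, your ``apply $g_t$ and re-thicken'' step has a genuine gap. After $g_t$, the original horizontal rectangles have $\omega_t$-height $e^{-t}\delta$ and $\omega_t$-width $e^t w_j$; thickening them back to height $\delta$ means, in $\omega_0$-coordinates, extending vertically by $e^t\delta$, and there is no reason this extension remains embedded or avoids $\Sigma$. The assertion that ``it occurs in a disc'' is unjustified for a rectangle of unbounded $\omega_t$-width. Likewise the vertical rectangles now have $\omega_t$-width $e^t\delta$, not $\delta$, and you do not address how to restore the correct width while maintaining the combinatorics of the cycle. The paper avoids this entirely: instead of transporting rectangles, it computes directly that each modified transversal $\gamma'-\varepsilon m_i\gamma_i$ has \emph{bounded} $\omega_t$-length (bounding $|\Im_{\omega_t}|$ is immediate, and bounding $|\Re_{\omega_t}|$ is the main calculation, using $e^t=\tan\theta$ and $m_i=\lfloor(h_i/w_i)\tan\theta\rfloor$). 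Since $\omega_t\in\partial\CC_\epsilon$ lies in a compact set, a transversal of bounded length is automatically $(k',\delta,h)$-tunneling for uniform $k'$. Your contraction heuristic is morally this same length bound, but to make it a proof you should replace the re-thickening by the length computation plus compactness.
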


\begin{proof}



We first reduce our study to (not necessarily closed) paths inside a single
cylinder. Let $X = \{x_1,x_2,\ldots, x_p\}$ be the middle points of saddle
connections in the direction of the shortest saddle connection $\gamma$ for $\omega_0$.
We call \emph{transversal} a flat geodesic segment $\gamma'$
that joins two points of $X$ and is
disjoint from saddle connections parallel to $\gamma$.
Any curve
in $(S,\Sigma,\omega_0)$ is freely homotopic in $S \backslash \Sigma$ to a
concatenation of transversals $\gamma'_i$
such that $|\Im(\omega_0)(\gamma_i')| < 2R$ and
$|\Re(\omega_0)(\gamma_i')| < 2R$
where the constant $R$ may be choosen independently of
$(S,\Sigma,\omega)$ in $\partial \CC_\epsilon$.
Moreover, if $\zeta$ is a $(k,\delta,h)$-tunneling curve for $\omega_0$, the
minimal number of pieces is uniformly bounded in terms of $\epsilon$, $k$,
$\delta$ and $h$.

Let us fix $x$ and $y$ on the boundary of some cylinder $C_i$ and
denote by $T(x,y)$ the set of transversals
that join $x$ to $y$.
We build rectangles around the curves in $T(x,y)$ in order to be able
to reconstruct a cycle of rectangles.
A $(k',\delta,h)$-\emph{path of rectangles} for $x$ and $y$ is a set
of rectangles $H_1$, $V_1$, \ldots, $H_{k'}$, $V_{k'}$, $H_{k'+1}$
such that 
\begin{itemize}
\item the height of $H_j$ is $\delta$ and its width is $w_j \geq \delta$;
\item the width of $V_j$ is $\delta$
and its height $h_j$ satisfies $\delta \leq h_j \leq h$;
\item $H_j^*(\Re(\omega)) = \pm dx$ and $H_j^*(\Im(\omega)) = dy$;
\item $V_j^*(\Re(\omega)) = dx$ and $V_j^*(\Im(\omega)) = \pm dy$;
\item each rectangle $H_j$ is embedded in the surface;
\item for each $j$, $H_j ([0,\delta] \times [0,\delta]) = V_{j-1}([0,\delta] \times [h_j-\delta,h_j])$
	and
	$H_j ([w_j-\delta,w_j] \times [0,\delta]) = V_j([0,\delta] \times [0,\delta])$;
  \item $H_1(\delta/2,\delta/2) = x$ and $H_{k'+1}(w_{k'+1}-\delta/2, \delta/2) =
y$.
\end{itemize}
As we did for cycles of rectangles, to a $(k',\delta,h)$-path of rectangles
for $x$ and $y$ we may associate 
its homology class in $H_1(S \setminus \Sigma, \{x,y\}; \Z)$.


Let us fix a transversal $\gamma'$ joining $x$ and $y$ inside some cylinder
$C_i$ and such that $|\Im(\omega_0)(\gamma')|<2R$ and
$|\Re(\omega_0)(\gamma')|<2R$.  Since any $(k,\delta,h)$-tunneling curve
can be decomposed into a uniformly bounded number of such transversals, it
will be enough for us to prove that there exists $k'>0$ (only depending on
$\epsilon,\delta,h,R$) with the following properties:
\begin{enumerate}
\item The transversal $\gamma'' \in T(x,y)$ in the class of
$\gamma'-\kappa m_i \langle \gamma',\gamma_i
\rangle \gamma_i$ is a $(k',\delta,h)$-tunneling path for $\omega_t$;
\item For every $0 \leq \ell \leq m_i$, the class of
$\ell \gamma_i$ is $(k',\delta)$-tunneling for $\omega_0$.
\end{enumerate}
Note that the width $w(C_i)$, the height $h(C_i)$, and the modulus,
$\mu(C_i)$ are all bounded away from zero and infinity, independently of
$i$, through $\partial \CC_\epsilon$.  In particular, we may assume that
$w(C_i)$ and $h(C_i)$ are bigger than $10 \delta$.  Note also that
$\langle \gamma',\gamma_i\rangle=\pm 1$.

Write $\frac
{\Im(\omega_0)(\gamma)} {\Re(\omega_0)(\gamma)}$ as $\kappa
\tan \theta$ with
$\frac {\pi} {4}<\theta<\frac {\pi} {2}$.
We may assume that the cusp excursion of the surface $(S,\Sigma,\omega_0)$ in
$\CC_\epsilon$ is long enough, so that in particular $m_i \geq 10$ and $\tan
\theta>\frac {2 R} {h(C_i)}$.

Let us first show the second property.  It is enough to show that for $0
\leq \ell \leq \frac {3 m_i} {4}$, $\ell \gamma_i$ can be represented by a
$(2,\delta)$-tunneling curve.  Let $\hat C_i$ denote the ``core of $C_i$'',
obtained by removing a $\frac {h(C_i)} {8}$-neighborhood of its boundary.
If $l \leq \frac {3 m_i} {4}$,
then we can represent $\ell \gamma_i$ by a
concatenation $\gamma'_i$ of a vertical path of length $\ell w(C_i) \sin \theta$
and a horizontal path of length $\ell w(C_i) \cos \theta$ inside
$\hat C_i$.\footnote {To
see that such a concatenation lies inside $\hat C_i$, note that the maximal
length of a vertical path in $\hat C_i$ is exactly $\frac {3} {4}
\frac {h(C_i)} {\cos \theta}$ and $\frac {3} {4}
m_i w(C_i) \sin \theta \leq \frac {3} {4} h(C_i) \frac {\sin^2 \theta}
{\cos \theta} \leq \frac {3} {4}
\frac {h(C_i)} {\cos \theta}$.}  It easily follows that the
$\delta$-enlargement of the horizontal part of $\gamma'_i$ is embedded in $C_i$,
while the $\delta$-enlargement of the vertical part of $\gamma'_i$ is contained
in $C_i$, so that $\gamma$ is $(2,\delta)$-tunneling.

Let us now show the first property.  By compactness considerations, it will
be enough to show that the
transversal $\gamma'' = \gamma' - \kappa m_i \langle \gamma',\gamma_i\rangle \gamma_i$ has bounded length with respect to $\omega_t$.  Up to
changing the orientation of $\gamma'$, we may assume that
$\langle \gamma',\gamma_i \rangle=1$.
Then, for the imaginary part we have
\[
|\Im(\omega_t)(\gamma'-\kappa m_i \gamma_i)|=e^{-t}
|\Im(\omega_0)(\gamma'-\kappa m_i
\gamma_i)|<e^{-t} 2 M+e^{-t} \left \lfloor \frac {e^t} {\mu(C_i)} \right
\rfloor \leq 2 M+\frac {1} {\mu(C_i)}.
\]
For the real part, note first that
\begin{align*}
\Re(\omega_t)(\gamma' - \kappa m_i \gamma_i) 
 &= e^t \Re(\omega_0)(\gamma' - \kappa m_i \gamma_i) \\
 &= \frac{|\Im(\omega_0)(\gamma_i)|}{|\Re(\omega_0)(\gamma_i)|}
\left(\Re(\omega_0)(\gamma')- \kappa m_i \Re(\omega_0)(\gamma_i)\right) \\
 &= \pm |\Im(\omega_0)(\gamma_i)|
\left(\frac{\Re(\omega_0)(\gamma')}{\Re(\omega_0)(\gamma_i)} - \kappa m_i \right).
\end{align*}
Recall that $m_i=\lfloor \frac
{h(C_i)} {u(C_i)} \tan \theta \rfloor$ while $\Im(\omega_0)(\gamma_i)$ is
uniformly bounded.
In order to conclude, let us show that $\frac {\Re(\omega_0)(\gamma')}
{\Re(\omega_0)(\gamma_i)}$ is at a uniformly bounded distance from
$\kappa \frac
{h(C_i)} {w(C_i)} \tan \theta$.  Using that $\langle \gamma',\gamma_i
\rangle=1$ and that $\tan \theta>\frac {2 M} {h(C_i)}$,
we see that $\Re(\omega_0)(\gamma')$ and
$\Im(\omega_0)(\gamma_i)$ have the same sign,\footnote {Indeed, let us
consider an horizontal path $\gamma''$
in $C_i$ joining the boundaries of $C_i$, which is homotopic to $\gamma'$
relative to $\partial C_i$.
Since $\Im(\omega_0)(\gamma')<2M$, the
condition $\tan \theta>\frac {2 M} {h(C_i)}$ implies that
the sign of $\Re(\omega_0)
(\gamma')$ is the same as the sign of $\Re(\omega_0)(\gamma'')$, and since
$\langle \gamma'',\gamma_i \rangle=\langle \gamma',\gamma_i \rangle=1$, this
must have the same sign as $\Im(\omega_0)(\gamma_i)$.}
which implies that $\frac {\Re(\omega_0)(\gamma')}
{\Re(\omega_0)(\gamma_i)}$ and $\kappa \frac
{h(C_i)} {w(C_i)} \tan \theta$ have the same sign as well.
We have $|\Re(\omega_0)(\gamma_i)|=w(C_i) \cos \theta$ and
\[
|\Re(\omega_0)(\gamma')| \pm |\Im(\omega_0)(\gamma')| \cot \theta=
\frac {h(C_i)} {\sin \theta},
\]
so that
\[
\frac {|\Re(\omega_0)(\gamma')|} {|\Re(\omega_0)(\gamma_i)|}=\frac {h(C_i)}
{w(C_i)} \frac {1} {\sin \theta \cos \theta} \mp \frac
{|\Im(\omega_0)(\gamma')|} {w(C_i) \sin \theta}.
\]
It follows that
\[
\frac {|\Re(\omega_0)(\gamma')|} {|\Re(\omega_0)(\gamma_i)|}-\frac {h(C_i)}
{w(C_i)} \tan \theta=\frac {h(C_i)}
{w(C_i)} \cot \theta \mp \frac
{|\Im(\omega_0)(\gamma')|} {w(C_i) \sin \theta}.
\]
Since $h(C_i)$ is uniformly bounded away from infinity, $w(C_i)$ is
uniformly bounded away from $0$, $\cot \theta<1$, $\sin \theta>2^{-1/2}$ and
$|\Im(\omega_0)(\gamma')|<2M$, the result follows.
\end{proof}

We now prove how Lemma~\ref{lem:cusp_excursion} may be used to
conclude the proof of Lemma~\ref{lem:Veech_criterion_for_Veech_surface}.
 \begin{proof}[Proof of Lemma~\ref{lem:Veech_criterion_for_Veech_surface}]
Let $(k,\delta,h)$ with $\delta$ small enough and $h$ large enough so
that every surface in $\CC_\epsilon$ admits a $(k,\delta,h)$-tunneling basis,
and such that for every surface in $\partial \CC_\epsilon$,
the waist curves $\gamma_i$ of the canonical cylinder decomposition
are $(k,\delta,h)$-tunneling.
Let $k' \geq k$ be such that the conclusion of
Lemma~\ref{lem:cusp_excursion} holds.
Let $M'$ be, as in the begining of the section, an upper bound for $\sum |c_\alpha|$ over all expressions $\sum
c_\alpha \zeta_\alpha$ of a $(k',\delta,h)$-tunneling curve in a
$(k',\delta,h)$-tunneling basis.
Let $(S,\Sigma,\omega)$ be a surface that admits an
eigenvalue $\nu$. We know from Theorem~\ref{thm:dual_Veech_criterion}
that there exists a time $t_0$ such that for every $t \geq t_0$ and every
$(k',\delta)$-tunneling curve $\alpha$ for $\omega_t=g_t \cdot \omega$
we have $\dist(\nu \Im(\omega)(\alpha),\Z) < 1/(4M')$.

Recall that for $t \geq t_0$ such that $\omega_t \in \CC_\epsilon$,
we may define $n_t \in H^1(S \backslash \Sigma; \Z)$ from the
nearest integer vectors of elements of $(k',\delta,h)$-tunneling
basis. By construction, $n_t$ remains constant in interval of times
for which $\omega_t \in \CC_\epsilon$.  Let $t \geq t_0$ be such that
$\omega_t$ is the beginning of a cusp excursion of length $\tau$.
We prove that $n_{t+\tau} = n_t$.

From Lemma~\ref{lem:cusp_excursion}, we know that there exist basis
$\{\zeta^0_j\}$ and $\{\zeta^1_j\}$ of $H_1(S \backslash \Sigma; \Z)$
such that the $\zeta^0_j$ are $(k,\delta,h)$-tunnneling for $\omega_t$,
the $\zeta^1_j$ are
$(k',\delta,h)$-tunneling for $\omega_{t+\tau}$ and for each $j$,
$\displaystyle \zeta^1_j - \zeta^0_j =
-\kappa \sum_{i=1}^c m_i \langle \zeta^{0}_j, \gamma_i \rangle \gamma_i$
is a sum of multiples of
waist curves of cylinders in the canonical decomposition of
$(S,\Sigma,\omega_t)$.  Moreover, each partial
sum $\displaystyle \zeta^\ell_j=\zeta^0_j-\kappa
\sum_{i=1}^c \ell_i \langle \zeta^{0}_j,
\gamma_i \rangle \gamma_i$, with $0 \leq \ell_i \leq m_i$,
is $(k',\delta)$-tunneling for
$\omega_t$.

Let us fix $j$. We know that $\zeta^0_j$ and $\zeta^1_j$ are $(k',\delta,h)$-$\omega_t$ tunneling. Hence
both $\nu \Im(\omega)(\zeta^0_j)$ and $\nu \Im(\omega)(\zeta^1_j)$ are $1/(4M')$ close from two integers
$v^0_j = n_t(\zeta^0_j)$ and $v^1_j = n_{t+\tau}(\zeta^1_j)$.
The curves $\zeta^\ell_j$ are $(k',\delta,\infty)$-tunneling for $\omega_t$ and
hence $\nu \Im(\omega)(\zeta^\ell_j)$ is also $1/(4M')$-close to an integer $w^\ell_j$.
Two consecutive interpolating curves  $\zeta^{\ell}_j$ and $\zeta^{\ell'}_j = \zeta^{\ell}_j \pm \gamma_i$
differ by $\pm \gamma_i$. The waist curves $\gamma_i$ are $(k',\delta,h)$-tunneling for
both $\omega_t$ and $\omega_{t+\tau}$ and hence $1/(4M')$
close to the integer $n_t(\gamma_i) = n_{t+1}(\gamma_i)$.
Hence we have $w^{\ell'}_j = w^\ell_j \pm n_t(\gamma_i)$ from which follows that $n_t(\zeta^1_j) = n_{t+\tau}(\zeta^1_j)$.

We have shown that $n_t$ and $n_{t+\tau}$ coincide on the basis $\{\zeta^1_j\}_j$. They
are hence equal in $H^1(S\backslash \Sigma; \Z)$.
\end{proof}


\section{Generic weak-mixing} \label{sec:generic_weak_mixing}

\subsection{Weak-mixing in almost every direction}
Let $(S,\Sigma,\omega)$ be a non-arithmetic Veech surface, and let $\CC$
be its $\SL(2,\R)$ orbit.

Let $k$ be the holonomy field of $(S,\Sigma,\omega)$.
We recall from Section~\ref{section:Galois_conjugate} that the
vector space $H^1(S;\R)$ admits $[k\mathop:\Q]$ invariant planes $V^\sigma$ associated to
the embeddings $\sigma: k \rightarrow \R$.
Let $\displaystyle W = \bigoplus_{\sigma} V^\sigma$
and $\pi_\sigma: W \rightarrow V^\sigma$ be the natural projection.
The real vector space
$W$ is actually defined over $\Q$ and we denote $W_\Z = W \cap H^1(S;\Z)$
the integer lattice in $W$. By definition, $\pi_\sigma$
restricted to $W_\Z$
is injective (because $W$ is the smallest subspace defined over $\Q$
that contains $V=V^{\id}$).

By Theorem~\ref{thm:Veech_criterion_for_Veech_surfaces}, if a surface $u \in \CC$ 
 has no vertical saddle connection and admits an
eigenvalue, then there exists a vector $v \in H^1(S \backslash \Sigma; \Z)$
such that $v-\nu \Im(\omega)$ belongs to the stable space of the Kontsevich-Zorich cocycle.
Notice that in this case, we
necessarily have $v \in W_\Z$. Indeed,
$\Im (\omega) \in W$, so the projection of $v-\nu \Im (\omega)$ on the
quotient $H^1(S \setminus \Sigma;\R)/W$ is contained in $H^1(S
\setminus \Sigma;\Z)/W_\Z$. If the projection of $v-\nu \Im \omega$
would be non-zero then the projection of $A_n(u) \cdot (v-\nu \Im (\omega))$
would be a non-zero integer vector as well, and hence far away from zero.

For $u \in \CC$ with no saddle connection, let $E^s(u)$ be the stable space
of the Kontsevich-Zorich cocycle restricted to $W$, i.e. the set of vectors
that are asymptotically shrinked by the Kontsevich-Zorich cocycle.

Notice that if $v=0$ and $v - \nu \Im (\omega) \in E^s(u)$ then $\nu=0$
(since $\Im (\omega)$ generates the strongest unstable subspace for the
Kontsevich-Zorich cocycle). Any measurable eigenfunction with eigenvalue
$0$ must be constant by ergodicity (which follows from the recurrence of $g_t
u$ in $\CC$).

Thus, the set of $u \in \CC$ with no saddle
connection and that admits an eigenvalue is contained in
\[
\EE=\bigcup_{v \in W_\Z \backslash \{0\}} \bigcap_{\sigma \neq \id}
\EE(\pi_\sigma(v)),
\]
where
\[
\EE(w)=\{u \in \CC:\, w \in E^s(u)\}.
\]
Then, the exceptionality of eigenvalues for non-arithmetic surfaces (i.e. the
ones for which we have at least one $\sigma \not= id$) follows from the following result.
\begin{theorem} \label{thm:atoms}
Let $(S,\Sigma,\omega)$ be a Veech surface and $\CC$ its $\SL(2,\R)$ orbit.
Let $k$ be its holonomy field and $V^\sigma$ the conjugates of the tautological
space $V^{id} = \R \Re(\omega) \oplus \R \Im(\omega)$.

Then, for any $\sigma:k \to \R$ and any non-zero vector $w \in V^\sigma$, the
set $\EE^\sigma(w) = \{u \in \CC:\ w \in (E^s(u) \cap V^\sigma)\}$ has measure zero
with respect to the Haar measure on $\CC$.

In other words, the harmonic measure of the Kontsevich-Zorich cocycle on
$V^\sigma$ has no atom.
\end{theorem}
Using that $W_\Z$ is only countable, the above theorem implies
that $\{u \in \CC:\ \text{$u$ has an eigenvalue}\}$ has zero measure.
But as shown in Section~\ref{subsec:reduction_to_Markov}, this is equivalent to the fact that
$\{\theta \in S^1:\ \text{$r_\theta u$ has an eigenvalue}\}$ has zero
Lebesgue-measure.

We will not prove Theorem~\ref{thm:atoms} here. It will be refined by our
bound on Hausdorff dimension. It can be proved using the fact the stable 
space of the Kontsevich-Zorich cocycles can be studied through random
walks (see e.g. the work Chaika-Eskin~\cite{ChaikaEskin} or Eskin-Matheus~\cite{EskinMatheus})
and the fact that the Veech group $\Gamma$ and its conjugates $\Gamma^\sigma$ are 
non-elementary.

\subsection{Bound on Hausdorff dimension}
Now, assuming the results of Section~\ref{sec:markov_model} and~\ref{section:anomalous}, we derive an upper bound on the Hausdorff dimension of $\EE(w)$.

Let $x = (S,\Sigma,\omega)$ be a Veech surface and $\CC$ its $\SL(2,\R)$-orbit.
We consider a small segment of unstable horocycle $\Delta$ through $x$ and build
a Poincar\'e section $Q$ for the Teichm\"uller flow that contains $\Delta$ as in
Section~\ref{subsec:Markov_for_CC}. The return map to $Q$ induces a map $T: \Delta \rightarrow \Delta$ and
an associated Kontsevich-Zorich cocycle $A: \Delta \rightarrow \Sp(H^1(S,\Sigma; \R))$.
We will show that in this horocycle segment, the set of
surfaces for which the the vertical flow is not weak mixing has Hausdorff
dimension $d<1$. By Lemma~\ref{redu}, it will follow that for any surface in $\CC$,
the set of directions for which the directional flow is not weak mixing has Hausdorff dimension
$d$ as well.

Actually the map $T$ is only a partial map defined on $\Delta^\infty \subset \Delta$ which
is the set of all $u \in \Delta$ which are in the domain of $T^n$ for all $n
\in \N$ (equivalently, the set of surfaces for which the forward Teichm\"uller
geodesic return infinitely often to the Poincar\'e section $Q$).
For any $u \in \Delta^\infty$,
we may define the stable space $E^{\sigma,s}(u)$ of the
Kontsevich-Zorich restricted to $V^\sigma$, i.e. the set of all $v \in V^\sigma$
such that $A_n(u) \cdot v \to 0$ as $n \to \infty$.
Let similarly as above
\[
\EE^\sigma(w) = \{u \in \Delta \backslash \Delta^\infty:\ w \in E^{\sigma,s}(u)\}.
\]
By Theorem~\ref{thm:nr} we have $\HD(\Delta \setminus \Delta^\infty)<1$.
Hence our main result (Theorem~\ref{thm:generic_weak_mixing}) will follow once we show that
for each $\sigma:k \to \R$ there exists $d_\sigma < 1$ so that $\HD(\EE^\sigma(w)) \leq d_\sigma$.

Recall that for any $\sigma$, the Lyapunov exponents of $\overline{r} \lambda^\sigma$ of $A|V^\sigma$ is positive 
(see Lemma~\ref{lem:separation_LE_Veech_surfaces}). Here $\overline{r}$ is the
normalization due to the time change induced by considering a first return map
to $Q$ instead of the geodesic flow (by definition, $\overline{r}$ is also the
inverse of the Lyapunov exponent of $A|V^{id}$ since $\lambda^{id} = 1$).

By Lemma~\ref{lem:expansion_vs_Lyapunov} we know that the expansion constant of $A|V^\sigma$
is given by the Lyapunov exponent. We can then apply Theorem~\ref{thm:LD_for_expansion} and 
Theorem~\ref{thm:HD} to conclude.

\subsection{On the group of eigenvalues in exceptional directions}
Using Theorem~\ref{thm:Veech_criterion_for_Veech_surfaces}, we will show
that the Kronecker
factor (the maximal measurable almost periodic factor)
of the translation flow of a Veech surface is always small.
For arithmetic Veech surfaces (square tiled surfaces), we will see
that, in any minimal direction,
this factor actually identifies with a maximal torus quotient of that surface.
For non-aritmetic one, we obtain that the dimension of the Kronecker factor
is at most the degree of the holonomy field.

Let $(S,\Sigma,\omega)$ be a Veech surface, $V = \R \Re(\omega) \oplus \R \Im(\omega) \subset H^1(S;\R)$
the tautological bundle and $k$ its trace field. For each embedding $\sigma:k \rightarrow \R$ we
note $V^\sigma$ the Galois conjugate of $V$. The subspace $W = \bigoplus V^\sigma \subset H^1(S;\R)$
is defined over $\Q$ and has dimension $2 [k\mathop:\Q]$.

The field $k$ acts by multiplication on $H^1(S;\R)$ preserving $H^1(S;\Q)$
as follows: for $\lambda \in k$ consider the endomorphism of $H^1(S;\R)$ that
acts by multiplication by $\lambda^\sigma$ in $V^\sigma$. In particular, the set $\OO_k$ of
elements $\lambda \in k$ that preserves $H^1(S;\Z)$ forms an order (a $\Z$-module
of rank $[k\mathop:\Q]$, stable under multiplication). This phenomenon is actually
much deeper as the action of $k$ preserves the complex structure on $H^1(S;\C)$ and the
Hodge decomposition $H^1(S;\C) = H^{1,0}(S) \oplus H^{0,1}(S)$ into holomorphic and
anti-holomorphic one forms: Veech surfaces belong to 
 so-called~\emph{real multiplication loci},
see~\cite{McMullen2003} and \cite{BouwMoeller2010}.

Now we turn to the case of arithmetic surfaces and describe their maximal tori. Let $(S,\Sigma,\omega)$ be
a square tiled surface. Let $x$ be a point in $S$ and $\Lambda$ the subgroup of $\C$ generated by
the integration of $\omega$ along closed loops. Then we have a well defined map
$f:S \rightarrow \C / \Lambda$ defined by $f(y) = \int_\gamma \omega \mod \Lambda$ where $\gamma$ is
any path that joins $x$ to $y$. The intersection $H^1(S;\Z) \cap V$
is naturally identified with $H^1(\C/\Lambda; \Z)$ through $f^*$ and we call $\C / \Lambda$ together with
the projection $f$ the \emph{maximal torus} of $S$.
Note that $f$ is not necessarily ramified over only one point.

\begin{theorem} \label{thm:nb_eig}
  Let $(S,\Sigma,\omega)$ be a Veech surface, $k$ its holonomy field.
  Then in each minimal direction
the group of eigenvalues is finitely generated.
  Moreover, 
  \begin{itemize}
	\item If $(S,\Sigma,\omega)$ is arithmetic ($k=\Q$) then, in each minimal direction, all eigenfunctions of the flow of $S$
are lifts from the maximal torus of $S$. In particular, there are exactly 2 rationally independent continuous eigenvalues.
	\item If $(S,\Sigma,\omega)$ is non-arithemtic ($k \not= \Q$) then,
the ratio of any two non-zero eigenvalues for the translation flow of $S$ belongs to $k$.  In
particular, in each minimal direction, there are at most $[k\mathop:\Q]$
rationally independent eigenvalues.
  \end{itemize}
\end{theorem}

\begin{proof}
  Let us assume that $\nu \in \R$ is an eigenvalue of the flow of $(S,\Sigma,\omega)$.

  Let $W = \bigoplus V^\sigma$ and $W_\Z = W \cap H^1(S;\Z)$.
  Let $E^s \subset W$ be the stable space of the Kontsevich-Zorich cocycle restricted to $W$ and
  denote $E^{s,\sigma} = E^s \cap V^\sigma$. Note that $E^{s,\sigma}$ has dimension at most 1.
  From Theorem~\ref{thm:Veech_criterion_for_Veech_surfaces}, if $\nu$ is an eigenvalue of the flow,
  there exists $v \in W_\Z$ such that $\nu \Im\omega - v \in E^s$.  The map
  $\nu \mapsto v$ provides a isomorphism between the group of eigenvalues
  and a subgroup of $W_\Z$, so the group of eigenvalues is finitely
  generated.

Decomposing $v= \sum v_\sigma$
  with respect to the direct sum $W = \bigoplus V^\sigma$ we get
  \begin{itemize}
	\item $\nu \Im\omega - v_{id} \in E^{s,id}$;
	\item for any $\sigma \not= id$, $v_\sigma \in E^{s,\sigma}$.
  \end{itemize}
  In particular, if the dimension of $E^s$ is not maximal then there is no eigenvalue.

  The action of $\OO_k$ preserves the set of lines in each $V^\sigma$ and hence preserves (globally)
  the stable space $E^s$. In particular, if $\nu \Im\omega - v \in E^s$ then for any $\lambda \in \OO_k$
  we have $\lambda\nu \Im\omega - \sum \sigma(\lambda) v_\sigma \in E^s$. So the set of potential eigenvalues
  \[
  \Theta = \{\mu \in \R:\ \exists v \in W_\Z, \quad \mu\Im\omega - v \in E^s\}
  \]
  is stable under multiplication by $\OO_k$.

  If $k=\Q$, then we saw that $W_\Z$ is naturally identified with the cohomology of the maximal torus of $S$. As
  all eigenvalues are contained in $\Theta$, they all come from the maximal torus.

  Now, the rank of $\Lambda$ is the rank of $H^1(S,\Z) \cap (\R \Im(\omega) \oplus E^s(\omega))$. We know that
  for non arithmetic surfaces this rank can not be maximal and is hence at most $[k\mathop:\Q]$. But, as
  $\Theta$ is stable under multiplication by $\OO_k$ its rank is a multiple of $[k\mathop:\Q]$ and hence is
  $0$ or $[k\mathop:\Q]$. As the ratio of any two eigenvalues is the ratio of two elements of $\Theta$ it
  belongs to $k$.
\end{proof}


\section{Markov model} \label{sec:markov_model}
We introduce a Markov model to study the geodesic flow on the
$\SL(2,\R)$-orbits of Veech surfaces. It will be used in
Section~\ref{section:anomalous} to get a bound on Hausdorff dimension of
exceptional sets. Our large deviation results mimic the case of random
indpendent variables. The condition that ensures enough independence is
the so-called \emph{bounded distorsion} property.

\subsection{Locally constant cocycles} \label{subsec:locally_const_cocycles}
Let $\Delta$ be a measurable space, and let $\mu$ be a finite probability
(reference) measure on $\Delta$.  Let $\Delta^{(l)}$, $l \in
\Z$ be a partition $\mu$-mod $0$ of
$\Delta$ into sets of positive $\mu$-measure.
Let $T:\Delta \to \Delta$ be a
measurable map such that $T|\Delta^{(l)}:\Delta^{(l)} \to \Delta$ is
a bimeasurable map (i.e. a map that admits an almost everywhere
defined measurable inverse).

Let $\Omega$ be the space of all finite sequences of
integers.  The length of $\l \in \Omega$ will be denoted by $|\l|$.

For $\l=(l_1,...,l_n) \in \Omega$, we let $\Delta^\l$ be the set of
all $x \in \Delta$ such that $T^{j-1}(x) \in \Delta^{l_j}$ for $1 \leq j
\leq n$.  We say that $T$ has \emph{bounded distortion} if there exists $C_0>0$ such
that every $\Delta^\l$ has
positive $\mu$-measure and
$\mu^\l=\frac {1}
{\mu(\Delta^{\l})} T^{|\l|}_* (\mu|\Delta^\l)$ satisfies
$\frac {1} {C_0} \mu \leq \mu^\l \leq C_0 \mu$.
In particular, for every $n \geq 1$,
$\mu_n=\frac {1} {n} \sum_{k=0}^{n-1} T^k_* \mu$ satisfies $\frac {1} {C_0}
\mu \leq \mu_n \leq C_0 \mu$. Taking a weak limit, one sees that
there exists an invariant measure $\nu$ satisfying $\frac {1} {C_0} \mu \leq
\nu \leq C_0\mu$.  It is easy to see that this
invariant measure is ergodic provided the $\sigma$-algebra of
$\mu$-measurable sets is generated (mod 0) by 
$\{\Delta^\l: \l \in \Omega\}$.

The bounded distorsion condition is a weakened form of independence.
It says that the conditional measures with respect to some past events are
comparable to the initial measure up to multiplicative constants.
Indeed, it can be seen that if $C_0=1$ in the definition of bounded
distorsion then the measure is invariant and is a Bernoulli measure, i.e. it
does not depend on the past (or equivalently if $\l=(l_1,l_2,\ldots,l_n)$ then
$\mu (\Delta^\l) = \mu(\Delta^{(l_1)}) \mu(\Delta^{(l_2)}) \ldots \mu
(\Delta^{(l_n)})$).

Let $H$ be a finite dimensional (real or complex)
vector space, and let $\SL(H)$ denote the space of linear automorphisms of
$H$ with determinant $1$.
Given $T: \Delta \rightarrow \Delta$ as above, we can define a locally constant $\SL(H)$-cocycle
over $T$ by specifying a sequence $A^{(l)} \in \SL(H)$, $l \in \Z$: Take
$A(x)=A^{(l)}$ for $x \in \Delta^{(l)}$ and
$(T,A):(x,w) \mapsto (T(x),A(x))$.  Then the cocycle iterates are given by
$(T,A)^n=(T^n,A_n)$ where $A_n(x)=A(T^{n-1}(x)) \cdots A(x)$.  Notice that
if $\l=(l_1,...,l_n)$ then $A_n(x)=A^\l$ for $x \in \Delta^\l$ with
$A^\l=A^{(l_n)} \cdots A^{(l_1)}$.

For a matrix $A$, let $\|A\|_+ = \max(\|A\|, \|A^{-1}\|)$.
We say that $T$ is \emph{fast decaying} if
there exists
$C_1>0$, $\alpha_1>0$ such that 
\[
\sum_{l:\mu(\Delta^{(l)}) \leq
\epsilon} \mu(\Delta^{(l)}) \leq
C_1 \epsilon^{\alpha_1},\qquad \text{for $0<\epsilon<1$},
\]
and we say that
$A$ is \emph{fast decaying} if
there exists $C_2>0$, $\alpha_2>0$ such that 
\[
\sum_{l:\|A^{(l)}\|_+ \geq n}
\mu(\Delta^{(l)}) \leq C_2 n^{-\alpha_2}.
\]

The fast decaying property of the map $T$ says that most of the measure is supported on pieces $\Delta^{(l)}$ of large measures.
In other words, it is a full-shift on an infinite alphabet but close to a full-shift on a finite alphabet.

The fast decaying property of cocycles is a crucial hypothesis for large deviations. It is equivalent to the
integrability of $\exp(\delta \log \|A\|_+)$ for $\delta$ small enough. It
implies in particular that $A$ is a $\log$-integrable cocycle with
respect to the invariant measure $\nu$, i.e. $\int \ln \|A\|_+ d\nu<\infty$.

Note that these two conditions would be automaticall
satisfied if instead of the infinite alphabet $\Z$ we had a finite one.

In our applications, $\Delta$ will be a
simplex in $\P \R^p$, i.e., the image of $\P
\R^p_+$ by a projective transformation, $\mu$ is the Lebesgue
measure, and $T|\Delta^{(l)}$ is a
projective transformation for every $l
\in \Z$.  In fact, we will be mostly interested in
the case $p=2$, and we will
use freely the identification of $\P \R^2$ with $\overline \R=\R \cup
\{\infty\}$.

Note that if there exists a simplex $\Delta \Subset \Delta'$ such that
the projective extension of $(T|\Delta^{(l)})^{-1}$ maps
$\Delta'$ into itself for every $l \in \Z$, then the bounded distortion
property holds, see section 2 of \cite {AvilaForni2007}.
In this case we will say that $T$ is a {\it projective expanding
map}.

\subsection{The Markov model for the geodesic flow on $\SL(2,\R)/\Gamma$}
\label{subsec:Markov_for_CC}
Now we construct a map $T: \Delta \rightarrow \Delta$ which is a first return map of the geodesic flow.
The bounded distorsion and fast decaying properties will follow from hyperbolic properties of the geodesic
flow. The associated Kontsevich-Zorich cocycle $A: \Delta \rightarrow \Sp(H^1(S,\Sigma;\R)$ is shown to 
have the fast decaying property.

Let $\CC=\SL(2,\R)/\Gamma$ where $\Gamma$ has finite covolume.  Let us fix any
point $x \in \CC$.  Then we can find a
small smooth rectangle $Q$ through $x$, which is transverse to
the geodesic flow and provides us with a nice Poincar\'e section, in the
sense that the first return map to $Q$ under the geodesic flow has a
particularly simple structure. All these properties will
follow from the following commutation rules in $\SL(2,\R)$
\[
g_t h^-_s g_{t} = h^-_{e^{-2t} s}
\quad \text{and} \quad
g_t h^+_u g_{-t} = h^+_{e^{2t} u}.
\]

More precisely, let $p:\R^2 \to \CC$ be given by $p(u,s)=h^-_s(h^+_u(x))$.
Then for any $\epsilon>0$, we can find $u_-<0<u_+$ and $s_-<0<s_+$ with
$u_+-u_-<\epsilon$ and $s_+-s_-<\epsilon$ such that, letting
$\Delta=(u_-,u_+) \subset \R$ and $\widehat \Delta=\{(u,s) \in \Delta
\times \R;\, s_-<\frac {s} {1+s u}<s_+\}$, we can take $Q=p(\widehat
\Delta)$.  It is clear that $Q$ is transverse to the geodesic flow. Let $F$
denote the first return map to $Q$.  Then
\begin{enumerate}
\item There exist countably many disjoint open
intervals $\Delta^{(l)} \subset \Delta$, such that the domain of $F$ is the
union of the $p(\widehat \Delta^{(l)})$, where
$\widehat \Delta^{(l)}=\widehat \Delta \cap (\Delta^{(l)} \times \R)$.
\item There exists a function $r:\bigcup \Delta^{(l)} \to \R_+$
such that if $(u,s)$ belongs to some $\widehat \Delta^{(l)}$ then the return
time of $p(u,s)$ to $Q$ is $r(u)$.  Moreover, $r$ is globally bounded away
from zero, and its restriction to each
$\Delta^{(l)}$ is given by the logarithm of the restriction of a
projective map $\overline \R \to \overline \R$.
\item 
There exist functions $T:\bigcup \Delta^{(l)} \to \Delta$ and
$S:\bigcup \Delta^{(l)} \to \R$ such that if $(u,s)$ belongs to some $\widehat
\Delta^{(l)}$ then $F(p(u,s))=p(T(u),S(u)-e^{-2r(u)}s)$.  Moreover, the
restriction of $T$ to each $\Delta^{(l)}$ coincides with the restriction
of a projective map $T_l:\overline \R \to \overline \R$, and the restriction
of $S$ to each $\Delta^{(l)}$ coincides with the restriction of an affine
map $S_l:\R \to \R$.
\item There exists a bounded open interval $\Delta'$ containing $\overline
\Delta$ such that $T_l^{-1}(\Delta') \subset \Delta'$ for every $l \in \Z$.
\end{enumerate}
The basic idea of the construction is to guarantee that the forward orbit of
the ``unstable''
frame $\delta_u(Q)=p \{(u,s) \in \partial \widehat \Delta;\, u=u_\pm\}$
and the backward orbit of the ``stable'' frame
$\delta_s(Q)=p\{(u,s) \in \partial \widehat \Delta;\, \frac {s} {1+s
u}=s_\pm\}$ never come back to $Q$.\footnote
{This is easy enough to do when $x$ is
not a periodic orbit of small period. 
From a segment of the unstable horcycle through $x$,
remove all $g_t$ pullback of the $C \epsilon$-neigorhood
of $x$. Because of hyperbolicity, the remainder is a small
Cantor set. Then $u_{\pm}$ are chosen so that $h^+_{u_\pm}(x)$ lies
in the complement of this set.
} 
This easily yields
the Markovian structure and the remaining properties follow from direct
computation (or, for the last property, by shrinking $\epsilon$).

\begin{rem} \label{rem:formulasSTr}
Given $u_0 \in \Delta^{(l)}$, knowledge of $T(u_0)$, $S(u_0)$ and
$r(u_0)$ allows one to easily compute $T$, $S$, and $r$ restricted to
$\Delta^{(l)}$.  Indeed, for $u \in \Delta^{(l)}$,
$g_{r(u_0)}(p(u,0))=h^+_{e^{2 r(u_0)} (u-u_0)} F(p(u_0,0))$.
To move it to $Q$, we must apply $g_{-t}$ where $t$ is bounded (indeed at
most of order $\epsilon$).  Using that $F(p(u_0,0))=p(T(u_0),S(u_0))$ one
gets $e^t=1+e^{2 r(u_0)} (u-u_0) S(u_0)$ and then the formulas
$$
e^{r(u)}=\frac {e^{r(u_0)}} {1+e^{2 r(u_0)} (u-u_0) S(u_0)},
$$
$$
T(u)=T(u_0)+\frac {e^{2 r(u_0)} (u-u_0)} {1+e^{2 r(u_0)} (u-u_0) S(u_0)}
$$
$$
S(u)=S(u_0) (1+e^{2 r(u_0)} (u-u_0) S(u_0)).
$$
Note that
\begin{equation} \label {der}
DT(u)=e^{2 r(u)},
\end{equation}
and that for every $l \in \Z$,
$r \circ T_l^{-1}:\Delta \to \R$ has uniformly bounded derivative.
\end{rem}

Note that $T$ is a projective expanding map with bounded distortion, so it
admits an ergodic invariant measure $\nu$ equivalent to Lebesgue measure.
In order to obtain an
upper bound on the Hausdorff dimension of the set of non-weak mixing
directions, we will also need to use that $T$ is fast decaying.  Using (\ref
{der}), we see that fast decay is implied by
the following well known exponential tail estimate on return times:
there exists $\delta>0$ (depending on $Q$) such that
$$
\int_{u_-}^{u_+} e^{-\delta r(u)} du<\infty.
$$

\begin{rem}
The exponential tail estimate is usually proved using a finite
Markov model for the full geodesic flow (as opposed to the infinite
Markov model for a Poincar\'e return map that we consider here).
However, it can also be proved using some more general information about the
geodesic flow.  

Details of this approach are carried out in \cite {AD2}, where it is used
to obtain exponential tails for the return time for a Markov
model of an arbitrary affine $\SL(2,\R)$-invariant measure
in moduli space.
\end{rem}

\begin{rem}
As remarked before, $r \circ T_l^{-1}:\Delta \to \R$ has uniformly bounded
derivative. 
This estimate can be iterated as follows. 
The equality $DT(u) = e^{2r(u)}$ does not depend on the fact that
it was a first return.  Let $r_n(u)=\sum_{k=0}^{n-1} r(T^k(u))$
be the $n$-th return time to $Q$.
Then we have $D T^n(u) = e^{2 r_n(u)}$ and formulas similar to the one
in Remark~\ref{rem:formulasSTr} for $T^n$ and $S^n$ hold.
Writing $T_\l = T_{l_n} \circ \cdots T_{l_1}: \Delta^\l \to \Delta$ it implies
that $r_n \circ T_\l^{-1}: \Delta \to \R$ has uniformly bounded derivative
independently of $n$.
\end{rem}

Let now $\CC=\SL(2,\R)/\Gamma$
be the $\SL(2,\R)$-orbit of some Veech surface.
Then the Kontsevich-Zorich cocycle over
$\CC$ gives rise to a locally constant cocycle over $T$ as follows.

Recall from Section~\ref{subsec:KZ} that there is a natural trivialization
of the Hodge bundle over $Q$ that preserve the integer lattice, the tautological
bundle $V=\R \Re\omega \oplus \R \Im\omega$ and its conjugates $V^\sigma$.
The induced Kontsevich-Zorich cocycle $A: \Delta \rightarrow \Sp(H^1(S;\R))$
is such that $A(y)$ depends continuously on $y \in p(\widehat \Delta_j)$. Because
it belongs to a discrete group it must be in fact constant. We let $A^{(l)}$
denote its value on $\Delta^{(l)}$.

If $\l=(l_1,...,l_n)$ with $n \geq 1$ then $F^n$ has a unique
fixed point $(S,\Sigma,\omega)=p(u_\l,s_\l)$ with
$(u_\l,s_\l)
\in \widehat \Delta^\l=(\Delta^\l \times \R) \cap \widehat \Delta$,
and $A^\l|V_x$ is hyperbolic with unstable direction $\Im(\omega)$,
stable direction $\Re(\omega)$, and Lyapunov exponent $r_n(u_\l)$. 
In particular, $\|A^\l|V\|$ is of order $e^{r_n(u_\l)}$ (up to uniformly
bounded multiplicative constants), since the angle
between $\Re(\omega)$ and $\Im(\omega)$ is uniformly bounded over $Q$.

Note that $\|A^\l\|$ is also of order $e^{r_n(u_\l)}$
(this follows for instance from Remark \ref {bound}).  In particular
($n=1$), the exponential tail estimate implies
that $A$ is fast decaying.

Since the geodesic flow on $\CC$ is ergodic, the
Lyapunov exponents of the Kontsevich-Zorich cocycle on $\CC$ (with
respect to the Haar measure) are the same as the Lyapunov exponents of the
locally constant cocycle $(T,A)$, with respect to the invariant measure
$\nu$, up to the normalization factor $\overline r=\int r(u) d\nu(u)$.

\subsection{Reduction to the Markov model} \label{subsec:reduction_to_Markov}
An eigenfunction $f:S \rightarrow \C$ with eigenvalue $\nu \in \R$ of a translation flow
$\phi_t:S \rightarrow S$ is a measurable function such that $f \circ \phi_t = e^{2\pi i \nu t}f$.
Note that if $f$ is a measurable or continuous eigenfunction for
the vertical flow on a translation surface $z=(S,\Sigma,\omega) \in
\MM_{S,\Sigma}(\kappa)$, then
$f$ is also an eigenfunction for $g \cdot z$
for any $g \in \SL(2,\R)$ which fixes the vertical direction, and in
particular for any $g$ of the form $h^-_s g_t$, $s,t \in \R$.

\begin{lemma} \label {redu}
Let $\CC$ be a closed $\SL(2,\R)$ orbit in some $\MM_{S,\Sigma}(\kappa)$.
Let $I$ be a non-empty open subset of
an $\SO(2,\R)$ orbit and let $J$ be a non-empty open subset of an unstable
horocycle.  Then:
\begin{enumerate}
\item For every $z_0 \in I$, there exists a diffeomorphism taking $z$ to $x$ from an open neighborhood $I' \subset I$ of $z_0$ to a subinterval of $J$, such that
the stable horocycle through $z$ intersects the geodesic through $x$,
\item For every $x_0 \in I$, there exists a diffeomorphism taking $z$ to $x$ from an open neighborhood $J' \subset J$ of $x_0$ to a subinterval of $I$, such that
the stable horocycle through $z$ intersects the geodesic through $x$.
\end{enumerate}
In particular, if $\Lambda$ is any subset of $\CC$ which is invariant by the
stable horocycle and geodesic flows (such as the set of translation surfaces
for which the vertical flow admits a continuous eigenfunction, or a
measurable but discontinuous eigenfunction),
$\HD(I \cap \Lambda)=\HD(J \cap \Lambda)$.

\end{lemma}

\begin{proof}
We prove the first statement.  Fix $z \in I$ and
some compact segment $J_0 \subset J$.  Then we
can choose $t_0$ large such that there exists $y \in J_0$ with
$g_{t_0}(y)$ close to $z$
(indeed as $t \to \infty$, $g_t \cdot I_0$
is becoming dense in $\CC$).  Thus for every $\theta \in \R$ close to
$0$ we can write $r_\theta z=h^-_s h^+_u
g_{t_0+t} y$ in a unique way with $s,t,u$ small, and moreover $\theta
\mapsto u$ is a diffeomorphism.

The second statement is analogous.
\end{proof}


\section{Anomalous Lyapunov behavior, large deviations and Hausdorff dimension}
\label{section:anomalous}

We have seen so far that weak mixing can be established by ruling out
non-trivial
intersections of $\Im(\omega)$ with integer translates of the strong stable
space. This criterion can be rephrased in terms of
certain fixed vectors (projections of integer points on
Galois conjugates of the tautological bundle) lying in the strong stable
space. In particular, its iterate must see a non-positive
rate of expansion, instead of the expected rate (given by one of
the positive Lyapunov exponents).

In this section we introduce techniques to bound anomalous Oseledets
behavior in the setting of
locally constant cocycles with bounded distortion.
The Oseledets Theorem states that for a typical orbit, any vector will
expand precisely at the rate of some Lyapunov exponent.  For a given
vector, one can consider the minimum expansion rate
which can be seen with positive probability.  We will first show a (finite
time) upper bound on the probability of seeing less than such minimum
expansion.  Then we will show that such an estimate can be converted
into an upper
bound on the Hausdorff dimension of orbits exhibiting exceptionally small
expansion.

\subsection{Large deviations} \label{subsec:ld}
Let $(T,A)$ be a locally constant $\log$-integrable cocycle over a map $T:\Delta \to \Delta$ preserving a measure $\mu$.
The \emph{expansion constant} of $(T,A)$ is the maximal $c \in \R$ such that
for all $v \in \R^d \setminus \{0\}$ and for
$\mu$-almost every $x \in \Delta$ we have
\[
\lim_{n \to \infty} \frac {1} {n} \ln \|A_n(x) \cdot v\| \geq c.
\]
(The limit exists by Oseledets Theorem applied to $\mu$.)

\begin{theorem} \label{thm:LD_for_expansion}
Let $T: \Delta \rightarrow \Delta$ be a countable shift endowed with a measure $\mu$ with bounded distorsion and which is fast decaying.
Let $A: \Delta \rightarrow \SL(d,\R)$ be a fast decaying cocycle.
Then for every $c'$ smaller than the expansion constant of $A$,
there exist $C_3>0$, $\alpha_3>0$ such that for every unit vector $v \in
\R^d$,
\[
\mu \{u \in \Delta:\ \|A_n(x) \cdot v\| \leq e^{c' n}\} \leq C_3 e^{-\alpha_3 n}.
\]
\end{theorem}

Actually, the expansion constant is intimately related to Lyapunov exponents. Recall
that the cocycle $A: \Delta \rightarrow \SL(d,\R)$ is \emph{irreducible} if there is no
non-trivial subspaces of $\R^d$ that is invariant under the group 
generated by the matrices $(A^{(l)})_{l \in \Z}$.
\begin{lemma} \label{lem:expansion_vs_Lyapunov}
Let $T: \Delta \rightarrow \Delta$ be a countable shift with bounded distorsion and let
$A: \Delta \rightarrow \SL(d,\R)$ be an irreducible and $\log$-integrable cocycle.
Then the expansion constant of $A$ is its maximal Lyapunov exponent.
\end{lemma}

Let us first prove the lemma.
\begin{proof}
Let $\nu$ be the invariant measure of $T$ and $\lambda$ the
maximal Lyapunov exponent of $A$, i.e. the a.e. limit
\[
\lambda = \lim_{n \to \infty} \frac{\ln \|A_n(u)\|}{n}
\]
Let
\[
E_2(u) = \left\{w \in \R^d:\ \lim_{n \to \infty} \frac{\ln \|A_n(u) \cdot w\|}{n} < \lambda\right\}.
\]
By the Oseledets Theorem, for any  $w \in \R^d \setminus \{0\}$ for $\nu$-almost every $u \in \Delta$ we have that
\[
\lim_{n \to \infty} \frac{\ln \|A_n(u) \cdot w\|}{n} = \lambda
\]
unless $w$ belongs to $E_2(u)$.

Assume that there exists a vector $w \in \R^d \backslash \{0\}$ and a
subset of $\Delta$ of positive $\mu$-measure such that $w \in E_2(u)$.
Let $H \subset \R^d$, be a subspace of maximal dimension so that $H \subset E_2(u)$ for a positive
measure set of $u$ and let $u_0 \in \Delta$ be a density point in that set. Then applying $T^n$ for $n$ large enough
and using bounded distorsion, one can see that the space $(A_n(u_0) H) \subset E_2(u)$ for a subset
of $\Delta$ of measure arbitrarily close to $1$. By compacity of the grassmanian of $\R^d$,
one can chose $H'$ of the same dimension so that $H' \subset E_2(u)$ for a.e. $u \in \Delta$.

Now consider a finite word $\l$ and the subspace $H'' = (A^\l)^{-1} H'$. Then, by construction
for a.e. $u \in \Delta^{\l}$ the set $H''$ is contained in $E_2(u)$. If $H'' \not = H'$ then the subspace
$H'' \oplus H'$ would be contained in $E_2(u)$ on a positive measure set and of larger dimension. This
contradicts our initial choice of $H'$. Hence the space $H'$ is invariant under
all matrices $A^{\l}$ which contradicts the irreducibility assumption since
$0 < \dim H' \leq \dim E_2(u) < \dim H$.
\end{proof}

\begin{proof}[Proof of Theorem~\ref{thm:LD_for_expansion}]
Let $c$ be the expansion constant of $A$ and let $c' < c$.
For $v \in \R^d \setminus \{0\}$, let $I(x,n,v)=\frac {1} {n}
\ln_+ \frac {\|A_n(x) \cdot v\|} {\|v\|}$ and $I(\l,v)=\frac {1} {|\l|} \frac
{\ln_+\|A^\l \cdot v\|} {\|v\|}$ where $\ln_+(t) = \max(0,\ln(t))$.

Recall that because $T: \Delta \to \Delta$ has bounded distorsion, the invariant measures $\mu$ comes
with a family of measures $\mu^\l$ for $\l \in \Omega$ defined by
\[
\mu \left(\Delta_{\l\ \l'}\right) = \mu \left(\Delta_{\l}\right)\ \mu^{\l}\left(\Delta_{\l'}\right).
\]
They satisfy $1/C \mu \leq \mu^\l \leq C \mu$ for some constant $C$ uniform in $\l$ (see section~\ref{subsec:locally_const_cocycles}).

\medskip

The proof basically proceeds in two steps. We first prove that there exists an
integer $n_0$ for which we have some decay (see
equation~\eqref{eq:ld2} which is uniform in $v \in \R^d \backslash \{0\}$ and $\l \in \Omega$). Then we use the bounded distorsion property to
extend it to all $n$.

\medskip

We first claim that
\begin{equation} \label{eq:ld1}
\lim_{n \to \infty}
\sup_{v \in \R^d \setminus \{0\}}
\sup_{\l \in \Omega}
\mu^\l\{x \in \Delta:\ I(x,n,v)<c'\}=0.
\end{equation}
Indeed, the definition of the expansion constant $c$ gives that for any $c'' < c$ we have
\[
\sup_{v \in \R^d \setminus \{0\}} \lim_{n \to \infty}
\mu\{x:\ I(x,n,v)<c''\}=0,
\]
And we exchange quantifiers as follows (the argument comes from Lemma 3.3 of~\cite{AvilaForni2007}). Let
$C(x,v) = \inf_{n \geq 0} \frac{\|A_n(x) \cdot v\|}{e^{n c''} \|v\|}$.
By hypothesis, $C(x,v) > 0$ for all $v \in \R^d \backslash \{0\}$ and almost
every $x$. Moreover if $v_k \in \R^d \backslash \{0\}$ is a sequence
converging to $v$ then $C(x,v_k) \to C(x,v)$ for almost every $x$.

Let $\alpha$ be a positive real constant.
Using the Fatou lemma, we have that
\begin{align*}
\limsup_{v_k \to v} \mu \{x: C(x,v_k) < \alpha\}
& = \limsup_{v_k \to v} \int_\Delta 1_{C(x,v_k) < \alpha} d\mu(x) \\
& \leq \int_\Delta \left(\limsup_{v_k \to v} 1_{C(x,v_k) < \alpha}\right) d\mu(x)
= \int_\Delta 1_{C(x,v)} d\mu(x).
\end{align*}
That is to say, the function $v \mapsto \mu \{x: C(x,v) < \alpha\}$ is upper semi-continuous.
Now for every $v$ there exists $\alpha_\epsilon(v)$ so that
$\mu \{x: C(x,v) \leq \alpha_\epsilon(v) \} < \epsilon$.
By upper semi-continuity, this is also true in a neighborhood of $v$ and we can conclude
using compactness (and the fact that $c''$ was arbitrary).

The fact that the convergence is uniform in $\l \in \Omega$ in
equation~\eqref{eq:ld1} follows directly from bounded distorsion.
This conclude the proof of the claim.

\medskip

Because $\int_\Delta I(x,n,v) d\mu^\l(x) \geq c'' \mu^\l \{x: I(x,m,v) \geq c''\}$ (Markov inequality), we can
find $n_0$ and $\kappa'$ so that
\begin{equation} \label{eq:ld2}
\sup_{v \in \R^d \setminus \{0\}} \sup_{\l \in \Omega} \int c'-I(x,n_0,v) d\mu^\l
<-\kappa'.
\end{equation}
Now, we transform this estimate into an exponential form.
By fast decay of the cocycle $A$, there exists $C'>0$, $\delta'>0$ such that
for all $|s|<\delta'$, $1 \leq n \leq n_0$ we have
\[
\sup_{v,\l} \int_\Delta |e^{s n (c'-I(x,n,v))}| d\mu^\l(x) \leq C'.
\]
Hence $\phi_{n,\l,v}: s \mapsto \int e^{s n (c'-I(x,n,v))} d\mu^\l(x)$
are uniformly bounded holomorphic functions of $|s|<\delta'$ for $n \leq n_0$, $\l \in \Omega$ and $v \in \R^d \backslash \{0\}$. Note that
$\phi_{n,\l,v}(0) = 1$ and $\phi'_{n,\l,v}(0) = n (\int c' - I(x,n,v) d\mu^\l(x))$.
Thus, using~\eqref{eq:ld2} that gives an upper bound on the derivative $\phi_{n_0,\l,v}(0)$,
there exists $\delta$, $\kappa$ (with $0 < \delta \leq \delta'$ and $0 < \kappa \leq \delta \kappa'$) so that
\begin{equation} \label{eq:proof_n0-1}
\sup_{v, \l} \int
e^{\delta n_0 (c'-I(x,n_0,v))} d\mu^\l<e^{-\kappa n_0},
\end{equation}
while, for every $1 \leq n \leq n_0-1$,
\begin{equation} \label{eq:proof_n0-2}
\sup_{v, \l} \int e^{\delta n (c'-I(x,n,v))} d\mu^\l<2 e^{-\kappa n}.
\end{equation}

\medskip

We have proved so far that at the scale $n_0$, we have an exponential decay. We now
use the bounded distorsion property to extend it for all times.
Note that from submultiplicativity of the norm, we have that $(m+n) I(x, m+n, v) \leq n I(x,n,v) + m I(T^n x, m, A_n(x) \cdot v)$.
Hence, for all positive integers $m,n$ we have
\begin{align} \label{eq:proof_prod}
\int e^{\delta (n+m) (c'-I(x,n+m,v))} d\mu(x)&\leq\sum_{|\l|=n}
\mu(\Delta^\l) e^{\delta n (c'-I(\l,v))}
\int e^{\delta m (c'-I(x,m, A^\l \cdot v))} d\mu^\l(x)\\
\nonumber
&\leq
\int e^{\delta n (c'-I(x,n,v))} d\mu(x) \sup_{|\l|=m} \int e^{\delta
m(c'-I(x,m,A^\l \cdot v))} d\mu^\l.
\end{align}
Now given any $n$, we do the euclidean division $n = k n_0 + r$.
Using the product formula~\eqref{eq:proof_prod} several times together with the
estimates~\eqref{eq:proof_n0-1} (for the parts in $n_0$)
and~\eqref{eq:proof_n0-2} (for the rest $r$) we obtain that
\[
\mu \{x: I(x,n,v) \leq c'\} \leq \int e^{\delta n (c'-I(x,n,v))} d\mu(x) \leq 2 e^{-\kappa n}.
\]
\end{proof}

\begin{rem}

The previous theorem can be somewhat refined: If $A$ is fast decaying and
for some vector $v \in \R^d
\setminus \{0\}$ we have $\lim \frac {1} {n} \ln \|A_n(x) \cdot v\|>
c'$ for a positive $\mu$-measure set of $x \in \Delta$, then the
$\mu$-measure of the set of $x$ such that $\frac {1} {n} \ln \|A_n(x) \cdot
v\| \leq c'$ is exponentially small in $n$.  This can be proved by reduction
to the setting above after taking the quotient by an appropriate invariant
subspace.

\end{rem}

\subsection{Hausdorff dimension}
The next result shows how to convert Theorem \ref{thm:LD_for_expansion}
into an estimate on Hausdorff dimension.  We will assume that $T:\Delta \to
\Delta$ is a transformation with bounded distortion, $\Delta$ is a
simplex in $\P \R^p$ for some $p \geq 2$ and $T|\Delta^{(l)}$ is a
projective transformation for every $l \in \Z$.

Recall that $T$ is fast decaying if there exists constants $\alpha_1 > 0$ and $C_1 > 0$ so that
for any $\epsilon > 0$
\[
\sum_{\mu(\Delta^{(l)}) \leq \epsilon} \mu(\Delta^{(l)}) \leq C_1 \epsilon^\alpha_1.
\]
\begin{theorem} \label{thm:HD}
Assume that $T$ is fast decaying.
For $n \geq 1$, let $X_n \subset \Delta$ be
a union of $\Delta^\l$ with $|\l|=n$,
and let $X=\liminf X_n$.  Let
\[
\delta = \limsup_{n \to \infty} \frac {-\ln \mu(X_n)} {n}.
\]
Then $\HD(X) \leq p-1-\min(\delta,\alpha_1)$ where $\alpha_1$
is the fast decay constant of $T$.
\end{theorem}

We will need a preliminary result:

\begin{lemma}
Assume that $T$ has bounded distortion and is fast decaying.  Then for
$0<\alpha_4<\alpha_1$, there
exists $C_4>0$ such that for every $n \geq 1$, we have
\[
\sum_{|\l|=n} \mu(\Delta^\l)^{1-\alpha_4} \leq C_4^n.
\]
\end{lemma}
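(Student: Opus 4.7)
The plan is to reduce the sum over multi-indices to a single-index sum by exploiting bounded distortion, and then estimate the single-index sum using fast decay via a dyadic decomposition.

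First I would use bounded distortion to factorize $\mu(\Delta^\l)$. For $\l=(l_1,\ldots,l_n)$ and $\l'=(l_1,\ldots,l_{n-1})$, note that $\Delta^\l = \Delta^{\l'} \cap T^{-(n-1)}(\Delta^{(l_n)})$, so
\[
\mu(\Delta^\l) = \mu(\Delta^{\l'})\,\mu^{\l'}(\Delta^{(l_n)}),
\]
and bounded distortion gives $\mu^{\l'}(\Delta^{(l_n)}) \leq C_0\mu(\Delta^{(l_n)})$. Iterating, we obtain
\[
\mu(\Delta^\l) \leq C_0^{n-1}\prod_{j=1}^n \mu(\Delta^{(l_j)}).
\]
Since $1-\alpha_4 > 0$, raising to the power $1-\alpha_4$ preserves the inequality, and
\[
\sum_{|\l|=n}\mu(\Delta^\l)^{1-\alpha_4} \;\leq\; C_0^{(n-1)(1-\alpha_4)}\left(\sum_{l\in\Z}\mu(\Delta^{(l)})^{1-\alpha_4}\right)^{\!n}.
\]

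The main (and only) substantive step is then to show that the single-variable sum $M := \sum_{l\in\Z}\mu(\Delta^{(l)})^{1-\alpha_4}$ is finite. For this I would dyadically decompose the index set. Let $I_k = \{l \in \Z : 2^{-k-1} < \mu(\Delta^{(l)}) \leq 2^{-k}\}$ for $k\geq 0$. Writing $\mu(\Delta^{(l)})^{1-\alpha_4} = \mu(\Delta^{(l)})\cdot \mu(\Delta^{(l)})^{-\alpha_4}$ and using $\mu(\Delta^{(l)}) > 2^{-k-1}$ on $I_k$,
\[
\sum_{l\in I_k}\mu(\Delta^{(l)})^{1-\alpha_4} \;\leq\; 2^{(k+1)\alpha_4}\sum_{l\in I_k}\mu(\Delta^{(l)}) \;\leq\; 2^{(k+1)\alpha_4}\cdot C_1\, 2^{-k\alpha_1},
\]
where the last inequality is the fast decay hypothesis applied at scale $\epsilon=2^{-k}$. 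Since $\alpha_4 < \alpha_1$, the resulting geometric series $\sum_k 2^{-k(\alpha_1-\alpha_4)}$ converges, giving $M < \infty$.

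Combining the two bounds yields $\sum_{|\l|=n}\mu(\Delta^\l)^{1-\alpha_4} \leq (C_0^{1-\alpha_4} M)^n$, so $C_4 = C_0^{1-\alpha_4} M$ (or any larger constant) suffices. There is no real obstacle here: the only ingredients are the multiplicative factorization coming from bounded distortion and the fact that fast decay, which on its face is a statement about first moments at small scales, automatically controls any moment of order $1-\alpha_4$ for $\alpha_4 < \alpha_1$ via the dyadic argument above.
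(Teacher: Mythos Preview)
Your proposal is correct and follows essentially the same approach as the paper: both use a dyadic decomposition together with fast decay to show $\sum_l \mu(\Delta^{(l)})^{1-\alpha_4}<\infty$, and then invoke bounded distortion to reduce the length-$n$ sum to the $n$-th power of a single-step quantity. The only cosmetic difference is that the paper phrases the reduction as an exact inductive identity $\sum_{|\l'|=n+1}\mu(\Delta^{\l'})^{1-\alpha_4}=\sum_{|\l|=n}\mu(\Delta^\l)^{1-\alpha_4}\sum_l \mu^\l(\Delta^{(l)})^{1-\alpha_4}$ and bounds the inner sum uniformly in $\l$, whereas you first bound $\mu(\Delta^\l)$ by a product and then sum; the content is the same.
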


\begin{proof}
The fast decay of $T$ implies that for any $k$,
\[
\sum_{2^{-k-1} \leq \mu(\Delta^{(l)}) \leq 2^{-k}} \mu(\Delta^{(l)})
\leq C_1 2^{-\alpha_1 k}.
\]
Hence
\[
\sum_{2^{-k-1} \leq \mu(\Delta^{(l)})^{1-\epsilon} \leq 2^{-k}} \mu(\Delta^{(l)})
\leq 2^{-(1-\epsilon)k} \frac{C_1 2^{-\alpha_1 k}}{2^{-k-1}}.
\]
Summing over $k$ we get
\[
\sum_{l \in \Z} \mu(\Delta^{(l)})^{1-\epsilon}
\leq 2 C_1 \sum_{k \geq 0} 2^{(\epsilon-\alpha_1) k}.
\]
It follows that for every $\l$,
\[
\sum_{l \in \Z} \mu^\l(\Delta^{(l)})^{1-\epsilon} \leq C_4.
\]
On the other hand, it is clear that
\[
\sum_{|\l'|=n+1} \mu(\Delta^{\l'})^{1-\epsilon}=\sum_{|\l|=n}
\mu(\Delta^\l)^{1-\epsilon} \sum_{l \in \Z} \mu^\l(\Delta^{(l)})^{1-\epsilon}.
\]
The result follows by induction.
\end{proof}

\begin{proof}[Proof of Theorem~\ref{thm:HD}]
Notice that there exists $C'>0$ such that if $0<\rho \leq \rho'$, then any
simplex with Lebesgue measure $\rho'$ is contained in the union of $C'
\frac {\rho'} {\rho^{p-1}}$ balls of diameter $\rho$.

Fix $0 < \delta' < \min(\delta,\alpha_1)$. By definition, there are
infinitely many $n$ so that $\mu(X_n) < e^{- \delta' C n}$. Fix such $n$
and fix $\delta'<\alpha_4<\alpha_1$. Let $C_4>0$ be as in the previous lemma,
and let
$C>0$ be such that $C_4 e^{- C (\alpha_4 - \delta')}<1$.
We are going to find a cover $\{B_i\}$ of $X_n$ by balls of
diameter at most $e^{-C n}$ satisfying
\[
\sum_i \diam(B_i)^{p-1-\delta'} \leq 2 C',
\]
showing that $\HD(\liminf X_n) \leq p-1-\delta'$.

Let $X_n=Y_n \cup Z_n$, where $Y_n$ is the union of those
$\Delta^\l$ with $|\l|=n$ such that
$\mu(\Delta^\l)>e^{-C n}$ and $Z_n$ is the complement.
It follows that $Y_n$ can be covered with at most
$C' \mu(Y_n) e^{(p-1) C n}$ balls of diameter $e^{-C n}$.  This cover
$\{B^Y_i\}$ satisfies
\[
\sum_i \diam(B^Y_i)^{p-1-\delta'} \leq C' \mu(X_n) e^{\delta' C n} \leq C'.
\]
  
Let us cover each $\Delta^\l \subset Z_n$ by the smallest possible number of
balls of diameter $\mu(\Delta^\l)$.  The resulting cover $\{B^Z_i\}$
of $Z_n$ then satisfies
\begin{align}
\sum_i \diam(B^Z_i)^{p-1-\delta'} &\leq \sum_{|\l|=n,\mu(\Delta^\l) \leq e^{-C
n}}
C' \mu(\Delta^\l)^{1-\delta'}\\
\nonumber
&\leq \sum_{|\l|=n} C'
\mu(\Delta^\l)^{1-\alpha_4} e^{-C n (\alpha_4-\delta')} \leq C' C_4^n e^{-C
n
(\alpha_4-\delta')} \leq C'.
\end{align}
The result follows.
\end{proof}

The following simple result will allow us to control the set of escaping
points as well.

\begin{theorem} \label{thm:nr}

Assume that $T$ is fast decaying.  Let $\Delta^n \subset \Delta$ be the
domain of $T^n$ and let $\Delta^\infty=\bigcap_{n \in \N} \Delta^n$.  Then
$\HD(\Delta \setminus \Delta^\infty) \leq p-1-\frac {\alpha_1}
{1+\alpha_1}$, where $\alpha_1$ is the fast decay constant of $T$.

\end{theorem}

\begin{proof}

Note that $\Delta^n \setminus \Delta^{n+1}=T^{-n}(\Delta \setminus
\Delta^1)$, so $\HD(\Delta \setminus \Delta^\infty)=\HD(\Delta \setminus
\Delta^1)$.

For simplicity, let us map $\Delta$ to the interior of the
cube $W=[0,1]^{p-1}$ by a bi-Lipschitz map $P$.  For $M \in \N$, let us
partition $W$ into $2^{M (p-1)}$ cubes of side $\delta=2^{-M}$ in the
natural way. Let us estimate the number $N$ of
cubes that are not contained in $P(\Delta^1)$.  In order to do this, we
estimate the total volume $L$ of those cubes.

For fixed $\epsilon>0$, $L$ is at most the sum $L_0$ of the
volumes of all $P(\Delta^{(l)})$ with volume at most $\epsilon$, plus the
sum $L_1$ of the volumes of the $(\sqrt {p-1}) \delta$-neighborhood of the
boundary of each $P(\Delta^{(l)})$ with volume at least $\epsilon$.

By the fast decay of $T$, we obviously have $L_0 \leq C
\epsilon^{\alpha_1}$.  On the other hand, the volume of the
$(\sqrt {p-1}) \delta$-neighborhood of the boundary of each
$P(\Delta^{(l)})$ is at most
$C \delta$.  Thus $L \leq C (\delta \epsilon^{-1}+\epsilon^{\alpha_1})$. 
Taking $\epsilon=\delta^{\frac {1} {1+\alpha_1}}$, we get $L \leq 2 C
\delta^{\frac {\alpha_1} {1+\alpha_1}}$ and hence $N \leq 2 C
\delta^{-M+\frac {\alpha_1} {1+\alpha_1}}$.  The result follows.
\end{proof}


\section{Construction of directions with non-trivial eigenfunctions}
In this section, we provide a general construction of directional flows
with non-trivial eigenfunctions in a Veech surface. This construction
makes use of very particular elements in the Veech group called \emph{Salem}.
The presence of a single element will allow us to apply a
somewhat more general geometric criterion for positivity of Hausdorff
dimension of certain exceptional Oseledets behavior, which we now
describe in the setting of locally constant cocycles.

\subsection{Lower bound on Hausdorff dimension} \label{section:stable_HD}

Let $H$ be a finite
dimensional (real or complex) vector space.
We consider locally constant $\SL(H)$-cocycles
$(T,A)$ where $T:\bigcup_{l \in \Z} \Delta^{(l)} \to \Delta$ restricts to
projective maps $\Delta^{(l)} \to \Delta$ between simplices in $\P\R^p$, $p
\geq 2$.
We will assume that there exists some $\l \in \Omega$ such that $\Delta^\l$
is compactly contained in $\Delta$, but we will not need to assume
that $T$ has bounded distortion or even that $\bigcup_{l \in \Z}
\Delta^{(l)}$ has full measure in $\Delta$.


\begin{theorem}

Let $(T,A)$ be a cocycle as above.  Assume that
for every $v \in H \setminus \{0\}$,
there exists $\l \in \Omega$ such that $\|A^\l \cdot v\|<\|v\|$.  Then there
exists a finite subset $J \subset \Z$ such that for
every $v \in H \setminus \{0\}$, there exists a compact set $K_v \subset
\Delta$ with positive Hausdorff dimension such that for every $x \in K_v$ we
have $T^n(x) \in \bigcup_{j \in J} \Delta^{(j)}$, $n \geq 0$,
and $\limsup \frac {1} {n} \ln \|A_n(x) \cdot v\|<0$.

\end{theorem}

\begin{proof}

Fix two words $\l',\l'' \in \Omega$ such that
$\Delta^{\l'}$ and $\Delta^{\l''}$ have disjoint closures contained
in $\Delta$. Two such words exist. Indeed, there is a word $\l_0$ so that
$\Delta^{\l_0}$ is compactly contained in $\Delta$. We can then take $\l'=(i,\l_0)$ and $\l''=(j,\l_0)$ for
two distinct integers $i$ and $j$.

By compactness, there exists $\epsilon>0$ and
a finite subset $F \subset \Omega$ such that
for every $v \in H \setminus \{0\}$,
there exists $\l(v) \in F$ such that $\|A^{\l(v)} \cdot v\|<e^{-\epsilon}
\|v\|$.  Let $J \subset \Z$ be a finite subset containing all entries of
words in $F$, as well as all entries of $\l'$ and $\l''$.

Let $F^n \subset \Omega$ be the subset consisting of the concatenation of
$n$ words (not necessarily distinct) in $F$.  By induction, we see that
for every $v \in H \setminus \{0\}$, there exists $\l^n(v) \in F^n$
such that $\|A^{\l^n(v)} \cdot v\|<e^{-n \epsilon} v$ (just take
$\l^1(v)=\l(v)$ and for $n \geq 2$ take $\l^n(v)$
as the concatenation of $\l(v)$ and $\l^{n-1}(A^{\l(v)} \cdot v)$).

Choose $n$ such that $e^{-n \epsilon}<\frac {1} {2} \max
\{\|A^{\l'}\|,\|A^{\l''}\|\}$.

For $k \geq 1$ and a sequence $(t_0,...,t_{k-1}) \in
\{0,1\}^k$, let us define
a word $\l(v,t)$ as follows.  For $k=1$,
we let $\l(v,t)=\l^n(v) \l'$ if $t=(0)$ and $\l(v,t)=\l^n(v) \l''$ if
$t=(1)$.  For $k \geq 2$ and $t=(t_0,...,t_{k-1})$, denoting $\sigma(t)=
(t_1,...,t_{k-1})$, we let
$\l(v,t)=\l(v,t_0) \l(A^{\l(v,t_0)} \cdot v,\sigma(t))$.

Recall that the simplex $\Delta$ can be endowed with its Hilbert metric.
It has the property that for any matrix $P$ so that $P \Delta$ is compactly contained in
$\Delta$, the map $P: \Delta \rightarrow \Delta$ is a contraction for the Hilbert metric.

Note that the diameter of $\Delta^{\l(v,t)}$ in the Hilbert metric of
$\Delta$ is exponentially small in $k$: indeed, the
diameter of $\Delta^{\l(v,t)}$ in $\Delta^{\l(v,t_0)}$ is equal to the
diameter of $\Delta^{\l(v,\sigma(t))}$ in $\Delta$, and the Hilbert metric
of $\Delta^{\l(v,(t_0))}$ is strictly stronger than the Hilbert metric of
$\Delta$.  Thus given an infinite sequence $t \in \{0,1\}^\N$, the sequence
$\Delta^{\l(v,(t_0,...,t_{k-1}))}$ decreases to a point denoted by
$\gamma_v(t) \in \Delta$.  The map $\gamma_v$ then provides a homeomorphism
between $\{0,1\}^\N$ and a Cantor set $K_v \subset \Delta$.

By definition, if $t \in \{0,1\}^k$ then
$\|A^{\l(v,t)} \cdot v\|<2^{-k} \|v\|$.  It thus follows that for
$x \in K_v$ we
have $\limsup \frac {1} {n} \ln \|A_n(x) \cdot v\| \leq -\frac {\ln 2} {M}$,
where $M$ is the maximal length of all possible words $\l(v,t)$, $v \in \R^d
\setminus \{0\}$, $t \in \{0,1\}$.

Let us endow $\{0,1\}^\N$ with the usual $2$-adic metric
$d_2$, where for $t \neq t'$ we let
$d_2(t,t')=2^{-k}$ where $k$ is maximal such that $t_j=t'_j$ for $j<k$.
With respect to this metric, $\{0,1\}^\N$ has Hausdorff dimension $1$.
To conclude, it is enough to show that $\gamma_v^{-1}:K \to \{0,1\}$ is
$\alpha$-H\"older for some $\alpha>0$, as this will imply that the Hausdorff
dimension of $K$ is at least $\alpha$.

Let $d$ be the spherical metric on $\P H$.  Let $\epsilon_0>0$ be such
that for every $x \in \partial \Delta$ and
$y \in \bigcup_{v \in H \setminus
\{0\}} \bigcup_{t \in
\{0,1\}} \Delta^{\l(v,t)}$
we have $d(x,y)>\epsilon_0$. Such $\epsilon_0$ exists since all $\Delta^{\l(v,t)}$ are contained
in $\Delta^{\l'} \cup \Delta^{\l''}$ which is compactly contained in $\Delta$.
Let $\Lambda>1$ be an upper bound on the
derivative of the projective actions of any $A^{\l(v,t)}$, $v \in \R^d
\setminus \{0\}$, $t \in \{0,1\}$.  For $k \in \N$, and $t \in \{0,1\}^\N$,
$\gamma_v(t)$ is contained in $\Delta^{\l(v,(t_0,...,t_k))}$ and hence
at distance at least $\epsilon \Lambda^{-k}$ from $\partial
\Delta^{\l(v,(t_0,...,t_{k-1}))}$.  It follows that if
$d_2(t,t') \geq 2^{1-k}$ then
$d(\gamma_v(t),\gamma_v(t')) \geq \epsilon_0 \Lambda^{-k}$.  The
result then follows with $\alpha=\frac {\ln 2} {\ln \Lambda}$.
\end{proof}

The previous result would have been
enough to construct continuous eigenfunctions.
In order to construct discontinuous eigenfunctions
as well, we will need the following more precise result.

\begin{theorem} \label{thm:HD_pos_with_control_of_convergence}
Let $(T,A)$ be a cocycle as above.
Assume that for every $v \in H \setminus \{0\}$,
there exist $\l,\tilde \l \in \Omega$ such that $\|A^\l \cdot
v\|<\|v\|<\|A^{\tilde \l} \cdot v\|$.  Then there
exists a finite subset $J \subset \Z$ such that for
every $v \in H \setminus \{0\}$, and for every sequence $a_k \in \R_+$,
$k \in \N$, such that $\sup_k |\ln a_k-\ln a_{k+1}|<\infty$,
there exists a compact set $K_v \subset
\Delta$ with positive Hausdorff dimension such that for every $x \in K_v$ we
have $T^n(x) \in \bigcup_{l \in J} \Delta^{(l)}$, $n \geq 0$, and there
exists a strictly increasing subsequence $m_k$, $k \in \N$, such that
$\sup_k m_{k+1}-m_k<\infty$ and $\sup_k |\ln \|A_{m_k}(x) \cdot v\|-\ln a_k|
<\infty$.
\end{theorem}

\begin{proof}

Fix two words $\l',\l'' \in \Omega$ such that
$\Delta^{\l'}$ and $\Delta^{\l''}$ have disjoint closures contained
in $\Delta$.

Let $C_0$ be an upper bound for $|\ln a_j-\ln a_{j+1}|$.

As in the proof of the previous theorem, define a finite set $F
\subset \Omega$ such that for every $v \in \R^d \setminus \{0\}$, there
exist $\l^c(v),\l^e(v) \in F$, such that
\[
\max_{\l \in \{\l',\l''}
\|A^{\l^c(v) \l} \cdot v\|<e^{-C_0} \|v\|,
\]
\[
\min_{\l \in \{\l',\l''}
\|A^{\l^e(v) \l} \cdot v\|>e^{C_0} \|v\|.
\]

Given $k \geq 1$ and a sequence $t=(t_0,...,t_{k-1})
\in \{0,1\}^k$, define $\l(v,t)$ by
induction as follows.  If $k=1$, then we let $\l(v,t)=\l^a \l^b$ where
$\l^a=\l^c(v)$ if $\|v\|>a_0$, $\l^a=\l^e(v)$ if $\|v\| \leq a_0$,
$\l^b=\l'$ if $t=0$ and $\l^b=\l''$ if $t=1$.  If $k \geq 2$, we let
$\l(v,t)=\l(v,t_0) \l(A^{\l(v,t_0)} \cdot v,\sigma(t))$, where
$\sigma(t_0,...,t_{k-1})=(t_1,...,t_{k-1})$.

Notice that the set $G \subset \Omega$
of possible words $\l(v,t)$ with $v \in \R^d \setminus
\{0\}$ and $t \in \{0,1\}$ is finite.

By induction, we get $|\ln \|A^{\l(v,t)} \cdot v\|-\ln a_k| \leq |\ln
\|v\|-\ln a_0|+C_1$, where
\[
C_1=\max_{\l \in G}
\{\ln \|A^\l\|,\ln \|(A^\l)^{-1}\|\}.
\]

As in the proof of the previous theorem, we
define $\gamma_v:\{0,1\}^\N \to \Delta$ so that $\gamma_v(t)$ is the
intersection of the $\Delta^{\l(v,(t_0,...,t_{k-1}))}$, and conclude that
$K_v=\gamma_v(\{0,1\}^\N)$ is a Cantor set of positive Hausdorff dimension.

By construction, if $x=\gamma_v(t)$, then for every $n \in \N$
we have $A_n(x) \in \Delta^{(j)}$ for some entry $j$ of some word in $G$. 
Moreover,
$|\ln \|A_{m_k}(x) \cdot v\|-\ln a_k| \leq |\ln \|v\|-\ln a_0|+C_1$ where
$m_k$ is the length of $\l(v,(t_0,...,t_{k-1}))$.  In particular, $m_k$ is
strictly increasing and $m_{k+1}-m_k$ is bounded by the maximal length of
the words in $G$.
\end{proof}

\subsection{Salem elements and eigenfunctions}

A real number $\lambda$ is a \emph{Salem} number if it is
an algebraic integer greater than 1, all its conjugates have absolute
values not greater than 1 and at least one has absolute value 1.
These conditions imply that the minimal polynomial of a
Salem number is reciprocal and that all conjugates have modulus one
except $\lambda$ and $1/\lambda$. For $M \in \SL(2,\R)$, we say that $M$
is a \emph{Salem} matrix if its dominant eigenvalue is a Salem number.

Let $(S,\Sigma,\omega)$ be a Veech surface,
$\Gamma$ its Veech group and $k$ the trace
field of $\Gamma$. We recall that the action of the Veech group
on the tautological subspace $V = \R \Re(\omega) \oplus \R \Im(\omega)$
is naturally identified with the Veech group (see Section~\ref{section:Veech_surfaces}).
For each $\sigma \in \Gal(k / \Q)$ there is a well defined conjugate
$V^\sigma$ of $V$ which is preserved by the affine group of
$(S,\Sigma,\omega)$. These actions identifies to conjugates of the
Veech group (see Section~\ref{section:Galois_conjugate}). Salem elements
in Veech group have an alternative definition: an element of a Veech group
is Salem if and only if it is direct hyperbolic\footnote{An hyperbolic
matrix in $\SL(2,\R)$ can either have a positive or negative dominant
eigenvalue. The former case is called \emph{direct hyperbolic}.} and its Galois conjugates
are elliptic.

\begin{theorem}~\label{thm:non_wm_expl}
  Let $(S,\Sigma,\omega)$ be a
non arithmetic Veech surface and assume that
its Veech group contains a Salem element. Then
  \begin{enumerate}
	\item the set of angles whose directional flow has a continuous
eigenfunction has positive Hausdorff dimension;
	\item the set of angles whose directional flow has a measurable
discontinuous eigenfunction has positive Hausdorff dimension.
  \end{enumerate}
\end{theorem}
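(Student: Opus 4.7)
The plan is to exploit the Salem element to put the cocycle restricted to each Galois conjugate $V^\sigma$, $\sigma \neq \id$, in an elliptic regime, and then use Theorem~\ref{thm:HD_pos_with_control_of_convergence} on $(T,A|W^0)$, where $W^0 = \bigoplus_{\sigma \neq \id} V^\sigma$, to produce positive-Hausdorff-dimension sets of cocycle orbits whose prescribed behavior realizes the Veech criterion of Theorem~\ref{thm:Veech_criterion_for_Veech_surfaces}. Let $\gamma \in \Gamma$ be the given Salem element with dominant eigenvalue $\lambda$ and associated affine diffeomorphism $\phi$. The linear action of $\phi$ on cohomology preserves each $V^\sigma$; it is hyperbolic on $V$ (eigenvalues $\lambda^{\pm 1}$) and elliptic (rotation by an angle $\theta_\sigma$ that is not a rational multiple of $\pi$, since $\lambda$ is not a root of unity) on each $V^\sigma$ for $\sigma \neq \id$. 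After a suitable rotation of the starting surface, $\gamma$ corresponds to a periodic word $\l_\gamma$ in the Markov model $(T, A)$ of Section~\ref{subsec:locally_const_cocycles}.

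The first step is to verify the pinching/twisting hypothesis of Theorem~\ref{thm:HD_pos_with_control_of_convergence}: for every $v \in W^0 \setminus \{0\}$ there exist words $\l, \tilde\l$ with $\|A^\l v\| < \|v\| < \|A^{\tilde\l} v\|$. This uses the positivity of each Lyapunov exponent $\lambda^\sigma$ on $V^\sigma$ from Lemma~\ref{lem:separation_LE_Veech_surfaces}, the non-solvability of the cocycle group proved there, and crucially the Salem iterates $(A^{\l_\gamma})^n$, which densely approximate rotations on each $V^\sigma$, allowing us to rotate any given $v \in W^0$ close to the contracting or expanding direction of another word realizing a non-zero Lyapunov behavior. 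The second step is to choose a non-zero $v_0 \in W_\Z$, rotate the surface by $\SO(2)$ so that $\pi_{\id}(v_0)$ is proportional to $\Im\omega$ at the reference point (a one-parameter adjustment), and then work with $v = \pi_{W^0}(v_0) \in W^0 \setminus \{0\}$.

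Applying Theorem~\ref{thm:HD_pos_with_control_of_convergence} then gives, for any prescribed sequence $a_k$, a positive-Hausdorff-dimension Cantor set $K \subset \Delta$ such that the orbit of each $u \in K$ remains in a finite set of Markov cells and $\|A_{m_k}(u) v\| \sim a_k$ along a bounded-gap subsequence $m_k$. For part~(1), I would take $a_k = \rho^k$ for some $\rho<1$: the geometric decay translates, via the product decomposition $A_n = \bigoplus_\sigma A_n|V^\sigma$, to $A_n \pi_\sigma(v_0) \to 0$ uniformly for $\sigma \neq \id$; combined with the automatic decay of the $V^{\id}$-component $(\beta - \nu)\Im\omega$ (which requires $\pi_{\id}(v_0)$ to be along $\Im\omega$, guaranteed by the preparatory rotation), the Veech criterion holds uniformly on $K$, and a Gottschalk--Hedlund-style construction of the eigenfunction from partial Birkhoff sums converges uniformly to produce a continuous eigenvalue $\nu = \beta$. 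For part~(2), I would take $a_k$ that still tends to zero (so the Veech decay is met and a measurable eigenfunction still exists) but with sparse logarithmically-bounded upward spikes, so that the convergence is almost everywhere but not uniform, forcing the resulting eigenfunction to be discontinuous while remaining square-integrable. Finally, Lemma~\ref{redu} transfers the positive Hausdorff dimension from the horocycle segment in $\Delta$ to the circle of directions.

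The principal obstacle is the last step: converting the prescribed cocycle behavior into an eigenfunction of the correct regularity class. The Veech criterion (Theorem~\ref{thm:Veech_criterion_for_Veech_surfaces}) provides only the \emph{necessary} condition on the cocycle for eigenvalues to exist; the converse construction will have to use the uniform cocycle control on finite Markov cells (guaranteed by Theorem~\ref{thm:HD_pos_with_control_of_convergence}) for the continuous case, and the sparse spikes together with the bounded-gap return structure for the discontinuous-but-measurable case. A secondary algebraic task, which I expect to be straightforward, is confirming the freedom to align $\pi_{\id}(v_0)$ with a prescribed $\Im\omega$-direction through the $\SO(2)$-action while preserving the applicability of the rest of the construction.
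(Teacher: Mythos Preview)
Your overall architecture is the paper's: verify the hypothesis of Theorem~\ref{thm:HD_pos_with_control_of_convergence} for $(T,A|W^0)$ using the Salem element together with words of large norm on each $V^\sigma$, produce positive-dimension Cantor sets with prescribed cocycle decay, and transfer to the circle via Lemma~\ref{redu}. Two points, however, differ materially from the paper's argument.

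\textbf{The main gap is the converse direction.} You correctly identify that Theorem~\ref{thm:Veech_criterion_for_Veech_surfaces} is only necessary, and you leave the construction of eigenfunctions as the ``principal obstacle.'' The paper does not carry out any Gottschalk--Hedlund or ad hoc construction; it invokes the Bressaud--Durand--Maass criterion (Theorem~\ref{thm:eig_lin_rec}), valid because the forward orbit visits only finitely many $\Delta^{(l)}$ and hence the flow is linearly recurrent. That criterion says $\nu$ is a continuous eigenvalue iff $\sum_n \|A_n(\nu\Im\omega - v)\|<\infty$, and an $L^2$ eigenvalue iff $\sum_n \|A_n(\nu\Im\omega - v)\|^2<\infty$. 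Accordingly the paper chooses the sequences $a_k$ so that on $Z_c$ one has $\sum\|A_n w\|<\infty$, and on $Z_m$ one has $\sum\|A_n w\|^2<\infty$ but $\sum\|A_n w\|=\infty$. Your ``sparse logarithmic spikes / a.e.\ but not uniform convergence'' description is not the right invariant; the dichotomy is $\ell^1$ versus $\ell^2$, and without Theorem~\ref{thm:eig_lin_rec} you have no mechanism that simultaneously \emph{produces} a measurable eigenfunction and \emph{excludes} a continuous one.

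\textbf{Two smaller issues.} First, the rotation to force $\pi_{\id}(v_0)\parallel\Im\omega$ is unnecessary and complicates the setup (the Markov model is anchored at a point of the Salem periodic orbit). The paper instead writes, for each $u$ in the Cantor set with $\omega=p(u,0)$, $v=w+\nu\Im\omega+\eta\Re\omega$ and observes that $A_n(u)\Re\omega\to 0$ exponentially since $\Re\omega$ spans the strongest contracting direction; thus $A_n(\nu\Im\omega-v)=-A_n w-\eta A_n\Re\omega$ is controlled by $A_n w$ alone. Second, when you say the Salem iterates ``densely approximate rotations on each $V^\sigma$,'' density on each factor is not enough: you need minimality on the product $\prod_{\sigma\neq\id}\P V^\sigma$ to steer an arbitrary $v\in W^0$. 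This requires the rational independence of the angles $\theta_\sigma$ and $\pi$, which is the content of Lemma~\ref{lem:irr_eig_salem} and is then used in Lemma~\ref{lem:contraction_hyp_and_salem}.
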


To build directions with eigenvalues,
we use a criterion proved in~\cite{BressaudDurandMaass} which is a
partial converse of the Veech criterion
(see Section~\ref{section:veech_criterion}). An earlier version of this
criterion appears in the paper of Veech~\cite{Veech1984}.
The criterion of~\cite{BressaudDurandMaass} only concerns
\emph{linearly recurrent systems}: the translation flow of
$(S,\omega)$ is linearly recurrent if there exists a
constant $K$ such that for any horizontal interval $J$
embedded in $(S,\omega)$ the maximum return time to $J$ is
bounded by $K/|J|$. Equivalently, a translation surface is
linearly recurrent if and only if the associated forward Teichm\"uller geodesic
is bounded in the moduli space of translation surfaces.
For these equivalence we refer to the so-called
\emph{Vorobet's identity} in the paper of Marchese-Hubert-Ulcigrai~\cite{MarcheseHubertUlcigrai}.

\begin{theorem}[\cite{BressaudDurandMaass}] \label{thm:eig_lin_rec}
Let $U$ be a relatively compact open subset in the moduli space
$\MM_g(\kappa)$ in which the Hodge bundle admits a trivialization
and let $A_n$ be the associated Kontsevich Zorich cocycle.
Let $(S,\Sigma,\omega) \in U$ be such that the return times to $U$ have
bounded gaps, then
\begin{enumerate}
\item $\nu$ is a continuous eigenvalue of $(S,\omega)$ if and only
if there exists an integer vector $v \in H^1(S;\Z) \backslash \{0\}$
such that
\[
\sum_{n \geq 0} \|A_n(\omega) \cdot (\nu \Im(\omega) - v)\| < \infty.
\]
\item $\nu$ is an $L^2$ eigenvalue of $(S,\Sigma,\omega)$ if and only if
there exists an integer vector $v \in H^1(S;\Z) \backslash \{0\}$ such that
\[
\sum_{n \geq 0} \|A_n(\omega) \cdot (\nu \Im(\omega) - v)\|^2 < \infty.
\]
\end{enumerate}
\end{theorem}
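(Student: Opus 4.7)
\medskip

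\noindent\textbf{Proof plan for Theorem~\ref{thm:eig_lin_rec}.}
The plan is to exploit the Rokhlin tower structure associated with successive returns $(g_{t_n}\cdot\omega)_n$ of the Teichm\"uller geodesic to the trivialization neighborhood $U$. Because these return times have bounded gaps (linear recurrence), at each step $n$ one obtains a decomposition of $(S,\Sigma,\omega)$ into a uniformly bounded number of flat Rokhlin towers $\TT_j^{(n)}$, with horizontal bases $B_j^{(n)}$ of uniformly comparable widths and with heights $h_j^{(n)}=\Im\omega(\gamma_j^{(n)})$ realized by a basis $\{\gamma_j^{(n)}\}$ of $H_1(S;\Z)$. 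The Kontsevich--Zorich cocycle $A_n(\omega)$ is dual to the change--of--basis matrix between the $\gamma_j^{(0)}$ and the $\gamma_j^{(n)}$, so that pairing $A_n(\omega)(\nu\Im\omega - v)$ with $\gamma_j^{(0)}$ equals pairing $\nu\Im\omega - v$ with $\gamma_j^{(n)}$. Thus $\|A_n(\omega)(\nu\Im\omega - v)\|$ measures exactly the combined deviation of the phases $\nu h_j^{(n)}$ from a coherent system of integer values recorded by $v$.

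For the ``if'' direction I would construct the eigenfunction $f$ as a limit of approximants $f_n$ taking a constant phase $e^{2\pi i \psi_j^{(n)}}$ on each tower $\TT_j^{(n)}$; here $\psi_j^{(n)}$ is obtained by integrating $\nu\Im\omega$ along the path from the bottom to a chosen floor of $\TT_j^{(n)}$, corrected by the integer values $v(\gamma_j^{(n)})$. The refinement from level $n$ to $n{+}1$ subdivides each tower into boundedly many sub-towers, and the consistency defect between $f_{n+1}$ and $f_n$ on each sub-tower is, up to bounded-geometry constants, exactly $\|A_n(\omega)(\nu\Im\omega-v)\|$ in the $C^0$ norm (and, because tower bases have uniformly bounded measure, also in the $L^2$ norm). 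Summability of this norm (resp. of its square) turns $(f_n)$ into a Cauchy sequence in $C^0$ (resp. $L^2$), whose limit obeys $f\circ\phi_T=e^{2\pi i\nu T}f$ on all scales compatible with the towers, hence, by density, for every $T$.

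For the ``only if'' direction, assume a continuous (resp. $L^2$) eigenfunction $f$ with eigenvalue $\nu$ and restrict it to a horizontal transversal through a base $B_j^{(n)}$. The functional equation applied along a tower $\TT_j^{(n)}$ of height $h_j^{(n)}$ forces $f(\phi_{h_j^{(n)}}x)=e^{2\pi i\nu h_j^{(n)}}f(x)$; comparing to values of $f$ on the transversal --- via its modulus of continuity (in case (1)) or via orthogonality of the step-function subspaces associated with the partition $\{B_j^{(n)}\}$ at different $n$ (in case (2)) --- forces the fractional part of $\nu h_j^{(n)}$ to approximate a \emph{single} integer value, which defines $v(\gamma_j^{(n)})$, with an error precisely controlled by the corresponding coordinate of $A_n(\omega)(\nu\Im\omega-v)$. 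Linear recurrence then lets one convert the $C^0$ (resp. $L^2$) bound on $f$ into summability (resp. square-summability) of these errors, yielding the stated convergence.

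The main obstacle is showing that a \emph{single} integer vector $v\in H^1(S;\Z)$ can be used uniformly at every level $n$: the local integer approximations of $\nu h_j^{(n)}$ must assemble compatibly into a single cohomology class. This compatibility is what requires the Hodge-bundle trivialization over $U$ together with the bounded gap property, which together ensure that the cocycle $A_n(\omega)$ is expressed by integer matrices in fixed integer bases and that the bases $B_j^{(n)}$ have uniformly bounded geometry. A secondary technical point, specific to part~(2), is that one needs an exact (not merely dominated) calibration of $\|f_{n+1}-f_n\|_{L^2}$ by $\|A_n(\omega)(\nu\Im\omega-v)\|$; this is obtained by exploiting the orthogonal decomposition of $L^2(S)$ along the nested tower partitions and applying a Parseval--type identity level by level.
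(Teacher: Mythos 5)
First, a point of orientation: the paper does not prove this statement at all --- it is imported verbatim from Bressaud--Durand--Maass \cite{BressaudDurandMaass}, followed only by a one-sentence remark about passing from the Cantor (Bratteli--Vershik) model, where the criterion is actually established, to the surface itself. So your proposal must be measured against the known argument of \cite{BressaudDurandMaass} rather than against anything in this paper. At the level of architecture you have reconstructed that argument correctly: linear recurrence yields nested Rokhlin tower partitions with uniformly bounded combinatorics, the cocycle $A_n$ is dual to the change of homology bases $\gamma_j^{(n)}$, one builds tower-constant approximants $f_n$, and one telescopes. Part (1) would go through essentially as you describe, and the mechanism you flag for extracting a single integer vector $v$ (once the errors fall below $1/2$, the integer approximations at level $n+1$ are forced by those at level $n$ through the integer matrix relating the two bases) is exactly the mechanism this paper uses elsewhere, in the proof of Lemma~\ref{lem:Veech_criterion_for_Veech_surface}.

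For part (2), however, your plan has genuine gaps rather than routine technical points. For sufficiency, square-summability of $\|A_n(\omega)\cdot(\nu\Im\omega-v)\|$ does not make $(f_n)$ Cauchy in $L^2$ by telescoping --- that would require summability of the increments' norms, not of their squares. You need the increments $f_{n+1}-f_n$ to be (almost) pairwise orthogonal so the norms add in quadrature, and this is not automatic: it is the substantive content of the $L^2$ half of \cite{BressaudDurandMaass}, resting on the fact that the level-$(n+1)$ phase errors average out over each level-$n$ tower. Your appeal to ``a Parseval-type identity level by level'' names the right phenomenon but does not establish it. For necessity in the $L^2$ case --- the hardest direction of the cited theorem --- your sketch is too thin to assess: starting from a merely measurable $f$ one must first produce the candidate integer vector $v$ (e.g.\ by showing that suitable conditional expectations of $f$ over the tower bases do not degenerate) before any error can be ``controlled by the corresponding coordinate of $A_n(\omega)\cdot(\nu\Im\omega-v)$'', and you give no argument for either step. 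Finally, you do not address the issue the paper itself flags in the sentence following the theorem: the criterion is proved on a Cantor model in which points of singular leaves are doubled, and upgrading continuity on that model to continuity on $S$ requires an argument via the cohomological equation.
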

Actually, the criterion applies to the Cantor space obtained from the
translation surface where each point that belongs to a singular leaf is
doubled. The continuity in that space is weaker than the continuity on
the surface. For a continuous eigenfunction $f$, the cohomological equation $f(\phi_T(x)) = e^{2i\pi \nu T}f(x)$
allows to recover the value along a leaf from the value at one point on that leaf.
But singular leaves either coincide in the past or in the future. Hence the values of the eigenfunctions
on the doubled leaves must be identical. In other words, the function is well defined on the quotient
which is the surface.

\begin{rem} \label{rmk:nb_eig_lin_rec}
  Theorem~\ref{thm:eig_lin_rec} allows to strengthen the
conclusion of Theorem~\ref{thm:nb_eig}
  when the flow is linearly recurrent: if $(S,\Sigma,\omega)$ is a non-arithmetic Veech surface with
  trace field $k$ and whose flow is linearly recurrent then it admits either $0$ or $[k:\Q]$ rationally
  independent eigenvalues. Moreover, they are simultaneously contiuous or discontinuous.
  Indeed (following the proof of Theorem \ref {thm:nb_eig}),
  any two non-zero
  ``potential eigenvalues'' $\mu,\mu' \in \R$ such that there
  exists $v,v' \in W_\Z$ with $\mu \Im(\omega)-v$ and $\mu' \Im(\omega')-v'$
  belong to $E^s$ are such that
  $\frac {\|A_n(\omega) \cdot (\mu \Im(\omega)-v)\|} {\|A_n(\omega) \cdot
  (\mu' \Im(\omega)-v')\|}$ is uniformly bounded away from zero or infinity
  (independently of $n$). Indeed, writing $v = \sum v_\sigma$ each component $v_\sigma$ belongs to $E^s(V^\sigma)$ and is non-zero. One concludes using the fact that $E^s(V^\sigma)$ is one dimensional.
  
  By Theorem \ref {thm:eig_lin_rec},
  $\mu$ is a continuous eigenvalue if and only if $\mu'$ is, and $\mu$
  is an $L^2$ eigenvalue if and only if $\mu'$ is.  Since the set $\Theta$ of
  potential eigenvalues is either $\{0\}$ or has dimension $[k:\Q]$ over
  $\Q$, the result follows.
\end{rem}

Before going into details of the proof, we provide
various examples of Veech surfaces which contain Salem elements.  In
particular the next result shows that Theorem \ref
{thm:non_wm_expl_for_quad_trace_field} follows from Theorem
\ref {thm:non_wm_expl}.

\begin{proposition}[\cite{BressaudBufetovHubert}, Proposition 1.7]
  A Veech surface with quadratic trace field has a Salem element in its
Veech group.
\end{proposition}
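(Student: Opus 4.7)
The plan is to produce an element $M \in \Gamma$ whose dominant eigenvalue $\lambda$ is a Salem number. Identify $\Gamma$ with its image in $\SL(2,k)$ via the tautological plane, and let $\sigma$ denote the nontrivial Galois automorphism of $k$. For $M \in \Gamma$ with trace $t = \operatorname{tr} M \in k$, the roots of the monic reciprocal polynomial $(x^2 - tx + 1)(x^2 - \sigma(t)x + 1) \in \Q[x]$ are exactly $\{\lambda, \lambda^{-1}, \lambda^\sigma, (\lambda^\sigma)^{-1}\}$. Hence $\lambda$ is Salem as soon as (i) $t \in \OO_k$, making $\lambda$ an algebraic integer; (ii) $|t| > 2$, so $\lambda > 1$; and (iii) $|\sigma(t)| < 2$, so that $\lambda^\sigma$ and its inverse are complex conjugates on the unit circle. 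Condition (i) is automatic for every $M \in \Gamma$: the affine group preserves the $\Q$-rational subspace $W = V \oplus V^\sigma \subset H^1(S;\R)$ together with the integer lattice $W_\Z = W \cap H^1(S;\Z)$, acting on $V$ as $M$ and on $V^\sigma$ as $M^\sigma$, so the characteristic polynomial of the action on $W_\Z$ is precisely $(x^2 - tx+1)(x^2 - \sigma(t)x+1)$ and must lie in $\Z[x]$, forcing $t + \sigma(t), t\sigma(t) \in \Z$.

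It therefore suffices to produce $M \in \Gamma$ that is hyperbolic while $M^\sigma$ is elliptic. By Corollary~\ref{prop:conjugates_indiscrete}, the group $\Gamma^\sigma$ is non-discrete in $\SL(2,\R)$, so every nonempty open set $U \subset \SL(2,\R)$ meets $\Gamma^\sigma$ in an infinite set; equivalently, infinitely many distinct $\gamma \in \Gamma$ have $\gamma^\sigma \in U$. I would choose $U$ to consist of elliptic matrices whose trace lies in a small interval $I \subset (-2,2)$ avoiding the finite exceptional set $\{\pm 2\} \cup \{\sigma(\operatorname{tr} h) : h \in \Gamma \text{ of finite order}\}$; the second piece is finite because a Fuchsian lattice has only finitely many conjugacy classes of elements of finite order and trace is a conjugacy invariant. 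For any $\gamma \in \Gamma$ with $\gamma^\sigma \in U$, the element $\gamma$ is not parabolic (parabolics have $\operatorname{tr} \gamma = \pm 2$, hence $\sigma(\operatorname{tr} \gamma) = \pm 2 \notin I$) and not of finite order (else $\sigma(\operatorname{tr} \gamma) \in I$ would lie in the excluded finite set), so $\gamma$ is hyperbolic; meanwhile $\gamma^\sigma$ is elliptic by construction. The first paragraph then shows that such a $\gamma$ is a Salem element of $\Gamma$.

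The main obstacle is the integrality step (i), which rests on the $\Q$-rationality of $W = V \oplus V^\sigma$ and the preservation of $H^1(S;\Z)$ by the affine group---features of Veech surfaces recorded in Section~\ref{subsec:hol_field_and_conj}. Once it is in place, the remainder is a clean combination of Corollary~\ref{prop:conjugates_indiscrete} with the standard finiteness properties of Fuchsian lattices.
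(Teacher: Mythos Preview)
Your argument follows the same route as the paper's: exploit the non-discreteness of $\Gamma^\sigma$ (Corollary~\ref{prop:conjugates_indiscrete}) to produce $\gamma \in \Gamma$ with $\gamma^\sigma$ elliptic of infinite order, and then argue that such a $\gamma$ must be hyperbolic. The paper does this by quoting Beardon's theorem that a non-discrete (non-elementary) subgroup of $\SL(2,\R)$ contains an elliptic element of irrational angle; you instead aim to place $\gamma^\sigma$ in a prescribed open set of elliptics whose trace avoids the finitely many values coming from torsion in $\Gamma$. The explicit integrality argument via the action on $W_\Z$ is a nice clarification that the paper leaves implicit.

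Two points need tightening. First, the implication ``non-discrete, so every nonempty open set meets $\Gamma^\sigma$'' is not valid as stated: a dense subgroup of the diagonal is non-discrete but nowhere near dense in $\SL(2,\R)$. What you actually need, and what does hold, is that $\Gamma^\sigma$ is dense. This follows because $\Gamma^\sigma$ is isomorphic to the lattice $\Gamma$ and hence contains a non-abelian free group, while every proper closed subgroup of $\SL(2,\R)$ is solvable; thus $\overline{\Gamma^\sigma}=\SL(2,\R)$. Second, your condition (ii) should read $t>2$, not $|t|>2$: if $t<-2$ the dominant eigenvalue of $M$ is negative and hence not a Salem number. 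This is harmless---replace $\gamma$ by $\gamma^2$, exactly as the paper does in its final sentence. With these two adjustments your proof is complete.
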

\begin{proof}
  We follow \cite{BressaudBufetovHubert}. The Veech group has only one
conjugate and this conjugate is non-discrete
(see Proposition~\ref{prop:conjugates_indiscrete}). On the other hand
we know from a result of Beardon (\cite{Beardon1983} Theorem~8.4.1)
that any non-discrete subgroup of $\SL(2,\R)$ contains an elliptic
element with irrational angle. If $g$ is an element of the Veech
group whose conjugate is an irrational rotation then $g$ can not be
elliptic as it is of infinite order and the Veech group is discrete and
$g$ can not be parabolic because a conjugate of a parabolic element
is again parabolic. Hence $g$ is hyperbolic and $g^2$ is a Salem
element of the Veech group.
\end{proof}

For more general Veech surfaces, we obtain examples through
computational experiments (see appendix~\ref{appendix:salem_triangle}
for explicit matrices).

\begin{proposition}
  For respectively odd $q \leq 15$ and any $q \leq 15$ the triangle groups
$\Delta(2,q,\infty)$ and $\Delta(q,\infty,\infty)$ contain Salem elements.
\end{proposition}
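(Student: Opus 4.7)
Recall from the paragraph preceding the proposition that an element $g$ of a Veech group is Salem precisely when $g$ is direct hyperbolic, i.e.\ its trace $t=\operatorname{tr}(g)$ satisfies $t>2$, and every nontrivial Galois conjugate $g^\sigma$, $\sigma \in \Gal(k/\Q) \setminus \{\id\}$, is elliptic, i.e.\ $|\sigma(t)|<2$. The plan is to exhibit such an element explicitly in each of the specified triangle groups, the existence being certified by a finite numerical check on the trace and its conjugates.

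First I would fix faithful representations of $\Delta(2,q,\infty)$ and $\Delta(q,\infty,\infty)$ in $\SL(2,\R)$. Standard formulas coming from the hyperbolic triangle with angles $(\pi/2,\pi/q,0)$ (respectively $(\pi/q,0,0)$) give explicit generators whose matrix entries lie in the trace field $k=\Q[\cos(\pi/q)]$ (resp.\ $k=\Q[\cos(2\pi/q)]$), up to normalization of a diagonal conjugation; in any case the trace of any word in the generators lies in $k$. For $q\leq 15$ one has $[k:\Q]\leq 6$, so the Galois group and the embeddings $\sigma$ are easy to enumerate.

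Next, for each $q$ in the specified range I would enumerate words in the generators up to some moderate length, compute $\operatorname{tr}(w)\in k$ as a polynomial in $\cos(\pi/q)$ (or $\cos(2\pi/q)$), and apply every non-identity embedding of $k$ into $\R$ by replacing the generating cosine with its algebraic conjugates. The Salem conditions $\operatorname{tr}(w)>2$ and $|\sigma(\operatorname{tr}(w))|<2$ for every $\sigma\neq \id$ are then checkable by a finite numerical computation; once one finds a word satisfying these inequalities, the element is Salem. The explicit matrices produced this way are listed in appendix~\ref{appendix:salem_triangle}, and their Salem property follows by direct evaluation of the two trace inequalities.

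The main obstacle is that there does not appear to be a uniform theoretical argument forcing the existence of a Salem element. When $[k:\Q]=2$ there is a single Galois-conjugate Veech group, and its non-discreteness (Corollary~\ref{prop:conjugates_indiscrete}) combined with Beardon's theorem on non-discrete subgroups of $\SL(2,\R)$ yields an elliptic conjugate with irrational angle, whose preimage is Salem after squaring. For $[k:\Q]\geq 3$ one must control several conjugates simultaneously, and the density argument gives no immediate joint elliptic element. This is why the proposition is established computationally for a finite explicit list of $q$ rather than uniformly in $q$.
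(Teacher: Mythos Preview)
Your proposal is correct and matches the paper's approach exactly: the proposition is established by computational search rather than by a uniform theoretical argument, with the explicit Salem words listed in Appendix~\ref{appendix:salem_triangle} and verified by checking the trace inequalities $\operatorname{tr}(w)>2$ and $|\sigma(\operatorname{tr}(w))|<2$ for each non-identity embedding. Your remark that the quadratic case admits a conceptual proof via non-discreteness and Beardon's theorem, while the higher-degree cases require brute-force search, is also precisely how the paper frames it.
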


In particular, Theorem~\ref{thm:non_wm_expl} holds for many billiards in
regular polygons $P_n$ defined in the introduction.

Now, we proceed to the proof of Theorem~\ref{thm:non_wm_expl}.

\begin{lemma} \label{lem:irr_eig_salem}
  Let $\lambda$ be a Salem number and
$\{\lambda, 1/\lambda, e^{i\alpha_1}, e^{-i\alpha_1},
\ldots, e^{i\alpha_k}, e^{-i \alpha_k}\}$ its Galois conjugates,
then $\alpha_1, \ldots, \alpha_k, \pi$ are rationally independant.
\end{lemma}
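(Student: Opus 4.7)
I will argue by contradiction. Suppose there is a non-trivial integer relation $n_0 \pi + \sum_{j=1}^k n_j \alpha_j = 0$. Writing $\beta_j = e^{i\alpha_j}$ and applying $x \mapsto e^{ix}$, the relation converts into the multiplicative identity $\prod_{j=1}^k \beta_j^{n_j} = (-1)^{n_0} \in \{\pm 1\}$. Observe that if all the $n_j$ with $j \geq 1$ vanish, then $n_0 \pi = 0$ forces $n_0 = 0$ as well, a trivial relation; so I may fix an index $j_0 \geq 1$ with $n_{j_0} \neq 0$ and seek a contradiction from it.

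The conceptual core is the reciprocal structure of the minimal polynomial $P$ of $\lambda$, which is a defining feature of Salem numbers: its roots come in inverse pairs $\{\lambda, 1/\lambda\}$ and $\{\beta_j, 1/\beta_j\}$ for $j = 1, \ldots, k$ (here $\overline{\beta_j} = 1/\beta_j$ since $|\beta_j|=1$). Let $L$ be the splitting field of $P$ and $G = \Gal(L/\Q)$. For any $\sigma \in G$ and any root $z$ of $P$ one has $\sigma(z^{-1}) = \sigma(z)^{-1}$, so $\sigma$ permutes the $k+1$ inverse pairs of roots. Moreover $P$ is irreducible, hence $G$ acts transitively on roots, so I can choose $\sigma \in G$ with $\sigma(\beta_{j_0}) = \lambda$. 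Then $\sigma$ sends the pair $\{\beta_{j_0}, 1/\beta_{j_0}\}$ to $\{\lambda, 1/\lambda\}$, and the remaining pairs $\{\beta_j, 1/\beta_j\}$ for $j \neq j_0$ are permuted among themselves; in particular $\sigma(\beta_j)$ has absolute value $1$ for every $j \neq j_0$.

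Applying $\sigma$ to the identity $\prod_{j} \beta_j^{n_j} = \pm 1$ then yields $\lambda^{n_{j_0}} \cdot u = \pm 1$ for some $u \in \C$ with $|u| = 1$. Taking absolute values gives $|\lambda|^{n_{j_0}} = 1$, and since $\lambda > 1$ this forces $n_{j_0} = 0$, contradicting the choice of $j_0$. I do not foresee any real obstacle in this strategy: once the reciprocal structure of $P$ is combined with the transitivity of the Galois action on its roots, the whole argument reduces to an elementary absolute-value computation.
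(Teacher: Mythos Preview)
Your proof is correct and follows essentially the same route as the paper: exponentiate the linear relation, use the reciprocal structure of the minimal polynomial to see that the Galois group permutes the inverse pairs of roots, pick $\sigma$ sending $\beta_{j_0}$ to $\lambda$ by transitivity, and take absolute values. One minor wording point: the pairs $\{\beta_j,1/\beta_j\}$ for $j\neq j_0$ need not be permuted \emph{among themselves} (one of them could land on $\{\beta_{j_0},1/\beta_{j_0}\}$), but your actual conclusion---that $|\sigma(\beta_j)|=1$ for $j\neq j_0$---is correct, since $\sigma$ is injective on pairs and $\{\lambda,1/\lambda\}$ is already hit.
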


\begin{proof}
  Let $n_1, \ldots n_k, m$ be integer such that 
  \begin{equation} \label{eq:salem_product}
  n_1 \alpha_1 + \ldots + n_k \alpha_k = 2 m \pi.
  \end{equation}
  Then
  \begin{equation} \label{eq:salem_power}
	\left( e^{i \alpha_1} \right)^{n_1} \ldots
\left( e^{i \alpha_k} \right)^{n_k} = 1.
  \end{equation}
  Each element of the Galois group is a
field homomorphism and hence it preserves the
partition $\{\lambda, 1/\lambda\}$, $\{e^{i \alpha_1}, e^{-i\alpha_1}\}$,
\ldots, $\{e^{i\alpha_k}, e^{-i \alpha_k}\}$. By definition,
the Galois group acts transitively on
 $\{\lambda, 1/\lambda, e^{i\alpha_1}, e^{-i\alpha_1}, \ldots,
e^{i\alpha_k}, e^{-i \alpha_k}\}$ and for each $i = 1, \ldots, k$
there exists a field homomorphism that maps $e^{i \alpha_j}$ to
$\lambda$ and all other $e^{i \alpha_{j'}}$ to some
$e^{\pm \alpha_{j''}}$. By applying this field
homomorphism to the equality (\ref{eq:salem_power}) and taking
absolute value we get that $n_i=0$ because $|\lambda| > 1$.  Hence the
relation (\ref{eq:salem_product}) is trivial.
\end{proof}

We now show that the presence of Salem elements allows us to verify the
hypothesis of Theorem~\ref{thm:HD_pos_with_control_of_convergence}.

\begin{lemma} \label{lem:contraction_hyp_and_salem}
Let $(S,\Sigma,\omega)$ be a Veech surface, $k$ its
holonomy field,
$V = \R \Re\omega \oplus \Im\omega$ the tautological subspace and
$W^0 = \bigoplus_{\sigma \in \Gal(k / \Q) \backslash \{\id\}} V^\sigma$.
Let $\gamma$ be a Salem element of the Veech group and $\gamma_j$, $j \geq
1$, be such that for each $\sigma \neq \id$, the norm of the conjugates
$\gamma_j^\sigma$ grows to infinity.
Denote by $g$ and $g_k$ their actions on $W^0$.  Then
for any $v \in W^0 \setminus \{0\}$ there exist positive integers $n_-,n_+$
and $k_-,k_+$ such that elements $g_-=g_{k_-} g^{n_-}$ and $g_+=g_{k_+}
g^{n_+}$ satisfy
$\|g_- v \| < \|v\| < \|g_+ v\|$.
\end{lemma}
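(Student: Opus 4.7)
The plan is to exploit the ellipticity of $\gamma$ on each non-identity conjugate of the tautological subspace, together with Lemma~\ref{lem:irr_eig_salem}, in order to move $v$ by $g$-iteration to an essentially arbitrary position on a natural torus in $W^0$, and then to apply some $g_k$ whose largest singular value on each $V^\sigma$ is under our control.

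First I would install on $W^0$ a $g$-invariant inner product. Since $\gamma$ is Salem, the Galois conjugate $\gamma^\sigma$ acting on $V^\sigma$ is elliptic for every $\sigma \neq \id$, with eigenvalues $e^{\pm i\alpha_\sigma}$; each $\gamma^\sigma$ therefore preserves some inner product on $V^\sigma$, and the orthogonal sum gives an inner product $\langle \cdot,\cdot\rangle_0$ on $W^0$ for which $g$ is a direct sum of rotations $R_{\alpha_\sigma}$. By Lemma~\ref{lem:irr_eig_salem}, the numbers $\{\alpha_\sigma\}_{\sigma \neq \id}\cup\{\pi\}$ are rationally independent, so Kronecker's theorem gives: for any $v=\sum_\sigma v_\sigma$ with $v_\sigma\in V^\sigma$ and $S=\{\sigma\neq\id:v_\sigma\neq 0\}$, the forward orbit $\{g^n v:n\geq 1\}$ is dense in the torus $T_v=\prod_{\sigma\in S} C_\sigma$, where $C_\sigma\subset V^\sigma$ is the $\|\cdot\|_0$-circle of radius $\|v_\sigma\|_0$.

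For each $k$ and each $\sigma\in S$ take a singular value decomposition of $\gamma_k^\sigma$ relative to $\|\cdot\|_0$, producing $\|\cdot\|_0$-unit vectors $u_\sigma^{\pm}(k)\in V^\sigma$ with orthogonal $\gamma_k^\sigma$-images and $\|\gamma_k^\sigma u_\sigma^{+}(k)\|_0=s_\sigma(k)$, $\|\gamma_k^\sigma u_\sigma^{-}(k)\|_0=s_\sigma(k)^{-1}$, where $s_\sigma(k)\to\infty$ as $k\to\infty$ by hypothesis (using equivalence of norms on the finite-dimensional $V^\sigma$). To obtain $g_+$, fix $k_+$ large enough that $s_\sigma(k_+)^2>2$ for every $\sigma\in S$, and use density of the orbit in $T_v$ to pick $n_+\geq 1$ so that $g^{n_+}v_\sigma$ makes an angle at most $\pi/4$ with $u_\sigma^{+}(k_+)$ for every $\sigma\in S$. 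Writing $g^{n_+}v_\sigma=a_\sigma u_\sigma^{+}(k_+)+b_\sigma u_\sigma^{-}(k_+)$ with $a_\sigma^2+b_\sigma^2=\|v_\sigma\|_0^2$, we get $a_\sigma^2\geq\|v_\sigma\|_0^2/2$, so
\[
\|\gamma_{k_+}^\sigma g^{n_+} v_\sigma\|_0^2 = s_\sigma(k_+)^2 a_\sigma^2 + s_\sigma(k_+)^{-2} b_\sigma^2 > \|v_\sigma\|_0^2,
\]
and summing over $\sigma\in S$ yields $\|g_{k_+}g^{n_+}v\|_0>\|v\|_0$.

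The construction of $g_-$ is analogous but requires approximation with $k_-$-dependent precision. Fix $k_-$ large enough that $s_\sigma(k_-)^2>2$ for every $\sigma\in S$, and by density of the orbit choose $n_-\geq 1$ so that $g^{n_-}v_\sigma$ makes an angle at most $\arcsin\bigl(1/(\sqrt{2}\,s_\sigma(k_-))\bigr)$ with $\pm u_\sigma^{-}(k_-)$ for every $\sigma\in S$. The analogous decomposition then gives $a_\sigma^2\leq\|v_\sigma\|_0^2/(2 s_\sigma(k_-)^2)$, whence
\[
\|\gamma_{k_-}^\sigma g^{n_-} v_\sigma\|_0^2 \leq \tfrac{1}{2}\|v_\sigma\|_0^2 + s_\sigma(k_-)^{-2}\|v_\sigma\|_0^2 < \|v_\sigma\|_0^2,
\]
so summing gives $\|g_{k_-}g^{n_-}v\|_0<\|v\|_0$; the strict inequalities transfer to any equivalent norm on $W^0$. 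The one delicate point is producing a single $n_{\pm}$ that simultaneously aligns (or nearly anti-aligns) $g^{n_{\pm}}v_\sigma$ with the appropriate singular direction of $\gamma_{k_\pm}^\sigma$ in every $V^\sigma$ at once, and this is precisely the content of Kronecker's theorem applied via Lemma~\ref{lem:irr_eig_salem}.
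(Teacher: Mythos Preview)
Your approach is essentially the paper's: use Lemma~\ref{lem:irr_eig_salem} to get density of the $g$-orbit on the relevant torus, align each $g^{n}v_\sigma$ with a singular direction of $\gamma_{k}^\sigma$, and exploit $s_\sigma(k)\to\infty$. The paper works on the projective torus $\prod_{\sigma\in D}\mathbb{P}V^\sigma$ rather than your product of circles, but this is cosmetic.

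There is one genuine slip. The assertion that ``the strict inequalities transfer to any equivalent norm on $W^0$'' is false: a bare inequality $\|g_-v\|_0<\|v\|_0$ does not survive a change of norm. What would transfer is an inequality with an \emph{arbitrarily small} factor, but your estimates give only $\|g_-v_\sigma\|_0^2\leq(\tfrac12+s_\sigma(k_-)^{-2})\|v_\sigma\|_0^2$, so the contraction ratio is bounded below by roughly $1/\sqrt{2}$ regardless of $k_-$. The paper sidesteps this by working directly in the given norm and making the contraction as strong as needed: it chooses $k$ so large that each component satisfies $\|g_k g^{n}\pi_\sigma(v)\|<\|v\|/\#D$, and then the triangle inequality yields $\|g_-v\|<\|v\|$ with no norm change. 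You can repair your argument the same way: tighten the alignment so that $|a_\sigma|\leq \epsilon\|v_\sigma\|_0/s_\sigma(k_-)$, giving $\|g_-v_\sigma\|_0\leq(\epsilon^2+s_\sigma(k_-)^{-2})^{1/2}\|v_\sigma\|_0$, which can be made small enough to beat any norm-equivalence constant.
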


\begin{proof}

Let $v \in W^0$ be a unit vector, and for $\sigma \neq \id$, let
$\pi_\sigma:W^0 \to V^\sigma$ be the projection on
the $V^\sigma$-coordinate.  Let $D \subset \Gal(k/\Q) \setminus \{\id\}$
be the set of all
$\sigma$ such that $\pi_\sigma(v) \neq 0$.
We show that there
exists positive integers $n$ and $k$ such that for
all $\sigma \in D$,
we have $\|g_k g^n \pi_\sigma(v)\| < \frac {\|v\|} {\# D}$,
which implies the first inequality with $n=n_-$ and $k=k_-$.
The other inequality may be obtained by the very same argument.

Let $\theta^\sigma_k$ be the norm of $\gamma^\sigma_k$.  By hypothesis,
$\theta^\sigma_k>1$ for every $\sigma$ and every $k$ sufficiently large.
Let $F^\sigma_- \in \P V^\sigma$ be the most contracted direction of
$\gamma^\sigma_k$ in $V^\sigma$.

Consider the action of $g$ on the torus
$\prod_{\sigma \in D} \P V^\sigma$.
By Lemma~\ref{lem:irr_eig_salem}, this action is minimal.  In particular
there exist a sequence of non-negative integer $n_j$ such that
$g^{n_j} \pi_\sigma(v)$ converges to a vector $w_\sigma$ in
$F^\sigma_- \setminus \{0\}$ for every $\sigma \in D$.  In particular, for
fixed $k \in \N$ we have $\lim_{j \to \infty}
\|g_k g^{n_j} \pi_\sigma(v)\|=\frac {\|w_\sigma\|} {\theta^\sigma_k}$
for every $\sigma \in D$.  The result follows by taking $k$ such that
$\frac {\|w_\sigma\|} {\theta^\sigma_k}<\frac {\|v\|} {\# D}$ (recall that
$\theta^\sigma_k \to \infty$ as $k \to \infty$).
\end{proof}

\begin{proof}[Proof of Theorem~\ref{thm:non_wm_expl}]

Let $V=\R \Re(\omega) \oplus \R \Im(\omega)$
be the tautological subspace of the cohomology
$H^1(S;\R)$ and $W = \oplus_{\sigma: k \rightarrow \R} V^\sigma$ and
$W^0=\oplus_{\sigma \neq \id} V^\sigma$.
We are going to construct a Markov model $(T:\Delta\rightarrow\Delta,A)$ for the Kontsevich-Zorich
over the $\SL(2,\R)$ orbit $\CC$ of $(S,\Sigma,\omega)$, a point $x \in
\Delta$ and a positive integer $n$, and a sequence of
points $y_k \in \Delta$ and positive integers $n_k$, such that $A_n(x)|V$ is
a Salem element and $\lim_{k \to \infty}
\inf_{\sigma \neq \id} \|A_{n_k}|V^\sigma\|=\infty$.  First, let us show how
this construction implies the result.

By Lemma \ref {lem:contraction_hyp_and_salem}, this
implies that the hypothesis of Theorem \ref
{thm:HD_pos_with_control_of_convergence} are satisfied for the cocycle
$(T,A|W^0)$, so for every
$w \in W^0 \setminus \{0\}$,
there exist subsets $Z_c,Z_m \subset \Lambda$ of positive
Hausdorff dimension such that for $u \in Z_c$ we have $\sum \|A_n(u) \cdot
w\|<\infty$ and for $u \in Z_m$ we have $\sum \|A_n(u) \cdot w\|^2<\infty$
and $\sum \|A_n(u) \cdot w\|=\infty$.  Moreover, since Theorem \ref
{thm:HD_pos_with_control_of_convergence}
provides also that $T^n(u)$ visits only finitely many distinct
$\Delta^{(l)}$, the return times $r(T^n(u))$ remain bounded so that the
forward Teichm\"uller geodesic starting at $u$ is bounded in moduli space.

Let us take $w$ as the projection on $W^0$, along $V$, of a non-zero vector
$v \in W \cap H^1(S;\Z)$.  Fix $u \in Z_c \cup Z_m$.  Let $\omega=p(u,0)$
and write $v=w+\nu \Im \omega+\eta \Re \omega$.  Then
$A_n(u) \cdot (\nu \Im \omega-v)=-A_n(u) \cdot w-\eta A_n(u) \cdot \Re
\omega$.  Note that $A_n(u) \cdot \Re \omega$ decays exponentially fast,
since $\Re \omega$ is in the direction of the strongest contracting
subbundle of the Kontsevich-Zorich cocycle.
Notice that $\nu \neq 0$,
otherwise the integer non-zero vectors $A_n(u) \cdot v$ would converge to
$0$.  By Theorem \ref {thm:eig_lin_rec}, if $u \in
Z_c$, then the vertical flow for $\omega$ admits a
continuous eigenfunction with eigenvalue $\nu$ and if $u \in Z_m$ then
the vertical flow for $\omega$ admits a measurable eigenfunction, but no
continuous eigenfunction, with eigenvalue $\nu$.

We have thus obtained positive Hausdorff dimension subsets $p(Z_c \times
\{0\})$ and $p(Z_m \times \{0\})$ of an unstable horocycle for which the
vertical flow has continuous and measurable but discontinuous
eigenfunctions.  Using Lemma \ref {redu}, we transfer the result to the
directional flow in any surface in $\CC$, giving the desired conclusion.

We now proceed with the construction of the Markov model.
Recall that for each hyperbolic element $\gamma$
in the Veech group, there exists a periodic orbit $O_\sigma$
of the Teichm\"uller
flow in the $\SL(2,\R)$ orbit and a positive integer $n_\gamma$,
such that the restriction to $V$ of the
$n_\gamma$-th iterate of the monodromy of the
Kontsevich-Zorich cocycle along this periodic orbit is conjugate to
$\gamma$.

Let $\gamma$ be a Salem element in the Veech group, and let us consider
the Markov model $(T,A)$ for the Kontsevich-Zorich cocycle obtained by
taking a small Poincar\'e section $Q$ through some $x \in O_\gamma$.
Then clearly $A_{n_\gamma}(x)|V$ is a Salem element.

On the other hand, by Lemma \ref {lem:separation_LE_Veech_surfaces}, $(T,A|V^\sigma)$ has a positive
Lyapunov exponent for every $\sigma$.  Thus
for large $n$ and for a set of $y$ of probability close to
$1$, the norm of $\|A_n(y)|V^\sigma\|$ is large.  In particular, for each $k
\in \N$ there exists $y_k$ and a positive integer $n_k$ such that
$\|A_{n_k}|V^\sigma\|>k$ for every $\sigma \neq \id$.  The result follows.
\end{proof}


\appendix
\section{Salem elements in triangle groups} \label{appendix:salem_triangle}
In this appendix we provide explicit Salem matrices in triangle groups $\Delta(2,q,\infty)$ and $\Delta(q,\infty,\infty)$ for trace field of degree greater than two and small values of $q$. The matrices are given in terms of the standard generators $s$, $t$ of the triangle group $\Delta(p,q,r)$ that satisfy
\[
  s^p = t^q = (st)^r = \pm id.
\]
Instead of writing down the minimal polynomial of the eigenvalue, we write it for half the trace. The roots of modulus less than one are cosine of the angles of the corresponding elliptic matrices.

The array stops at the values $q=17$ for $\Delta(2,q,\infty)$ and $q=16$ for $\Delta(q,\infty,\infty)$ for which we were unable to find Salem elements. All these examples were obtained using the mathematical software Sage~\cite{Sage}.

\subsection{Salem elements in $\Delta(2,q,\infty)$}
$$\renewcommand\arraystretch{1.3}\begin{array}{|c|c|c|}
\hline 
\text{q} & \text{degree} & \text{matrix $m$} \\ \hline 
& \multicolumn{2}{|c|}{\text{minimal polynomial of trace(m)/2}} \\ 
& \multicolumn{2}{|c|}{\text{approximate conjugates of trace(m)/2}} \\ \hline \hline 
   7 & 3 & t^{3}.s\\ 
  \hline &\multicolumn{2}{|c|}{x^{3} - 2 x^{2} - x + 1}\\ 
 & \multicolumn{2}{|c|}{2.247,\ 0.5550,\ -0.8019}\\ \hline \hline 
 9 & 3 & t^{4}.s\\ 
  \hline &\multicolumn{2}{|c|}{x^{3} - 3 x^{2} + 1}\\ 
 & \multicolumn{2}{|c|}{2.879,\ 0.6527,\ -0.5321}\\ \hline \hline 
11 & 5 & t^{5}.s.t^{4}.s\\ 
  \hline &\multicolumn{2}{|c|}{x^{5} - \frac{39}{2} x^{4} - 47 x^{3} - \frac{243}{8} x^{2} - \frac{17}{16} x + \frac{89}{32}}\\ 
 & \multicolumn{2}{|c|}{21.73,\ 0.2425,\ -0.6156,\ -0.8781,\ -0.9764}\\ \hline \hline 
13 & 6 & t^{7}.s.t^{7}.s.t^{4}.s\\ 
  \hline &\multicolumn{2}{|c|}{x^{6} - 227 x^{5} - 11 x^{4} + 318 x^{3} + 41 x^{2} - 110 x - 25}\\ 
 & \multicolumn{2}{|c|}{227.0,\ 0.9072,\ 0.8412,\ -0.2464,\ -0.6697,\ -0.8746}\\ \hline \hline 
15 & 4 & t^{7}.s\\ 
  \hline &\multicolumn{2}{|c|}{x^{4} - 4 x^{3} - 4 x^{2} + x + 1}\\ 
 & \multicolumn{2}{|c|}{4.783,\ 0.5112,\ -0.5473,\ -0.7472}\\ \hline
\end{array}$$

\subsection{Salem elements in $\Delta(q,\infty,\infty)$}
$$\renewcommand\arraystretch{1.3}\begin{array}{|c|c|c|}
\hline 
\text{q} & \text{degree} & \text{matrix $m$} \\ \hline 
& \multicolumn{2}{|c|}{\text{minimal polynomial of trace(m)/2}} \\ 
& \multicolumn{2}{|c|}{\text{approximate conjugates of trace(m)/2}} \\ \hline \hline 
 7 & 3 & t.s^{3}\\ 
  \hline &\multicolumn{2}{|c|}{x^{3} - 3 x^{2} - 4 x - 1}\\ 
 & \multicolumn{2}{|c|}{4.049,\ -0.3569,\ -0.6920}\\ \hline \hline 
 8 & 4 & t.s^{2}.t.s^{3}\\ 
  \hline &\multicolumn{2}{|c|}{x^{4} - 24 x^{3} + 15 x^{2} + 4 x + \frac{1}{8}}\\ 
 & \multicolumn{2}{|c|}{23.35,\ 0.8571,\ -0.03655,\ -0.1709}\\ \hline \hline 
 9 & 3 & t.s^{2}\\ 
  \hline &\multicolumn{2}{|c|}{x^{3} - 3 x^{2} + 1}\\ 
 & \multicolumn{2}{|c|}{2.879,\ 0.6527,\ -0.5321}\\ \hline \hline 
10 & 4 & t.s^{3}.t.s^{7}\\ 
  \hline &\multicolumn{2}{|c|}{x^{4} - 49 x^{3} - \frac{441}{4} x^{2} - \frac{291}{4} x - \frac{199}{16}}\\ 
 & \multicolumn{2}{|c|}{51.18,\ -0.2644,\ -0.9504,\ -0.9672}\\ \hline \hline 
11 & 5 & t.s^{4}.t.s^{7}\\ 
  \hline &\multicolumn{2}{|c|}{x^{5} - \frac{155}{2} x^{4} - 122 x^{3} - \frac{459}{8} x^{2} - \frac{173}{16} x - \frac{23}{32}}\\ 
 & \multicolumn{2}{|c|}{79.05,\ -0.1907,\ -0.2214,\ -0.2388,\ -0.9015}\\ \hline \hline 
12 & 4 & t.s^{2}.t.s^{3}\\ 
  \hline &\multicolumn{2}{|c|}{x^{4} - 24 x^{3} - 61 x^{2} - 48 x - \frac{191}{16}}\\ 
 & \multicolumn{2}{|c|}{26.38,\ -0.5254,\ -0.9096,\ -0.9468}\\ \hline \hline 
13 & 6 & t.s^{4}.t.s^{5}.t^{-1}.s^{4}.t^{-1}.s^{5}\\ 
  \hline &\multicolumn{2}{|c|}{x^{6} - \frac{43107}{2} x^{5} - \frac{188297}{4} x^{4} - 26514 x^{3} + \frac{53979}{8} x^{2} + \frac{304515}{32} x + \frac{124175}{64}}\\ 
 & \multicolumn{2}{|c|}{21560.,\ 0.5373,\ -0.3375,\ -0.7022,\ -0.8374,\ -0.8440}\\ \hline \hline 
14 & 6 & t.s^{5}.t.s^{9}\\ 
  \hline &\multicolumn{2}{|c|}{x^{6} - 125 x^{5} - \frac{955}{4} x^{4} - \frac{45}{4} x^{3} + \frac{1653}{8} x^{2} + \frac{967}{8} x + \frac{1009}{64}}\\ 
 & \multicolumn{2}{|c|}{126.9,\ 0.9692,\ -0.1912,\ -0.6930,\ -0.9794,\ -0.9879}\\ \hline \hline 
15 & 4 & t.s^{3}\\ 
  \hline &\multicolumn{2}{|c|}{x^{4} - 4 x^{3} - 4 x^{2} + x + 1}\\ 
 & \multicolumn{2}{|c|}{4.783,\ 0.5112,\ -0.5473,\ -0.7472}\\ \hline 
\end{array}$$

\end{document}